\documentclass[10pt]{article}%
\usepackage{placeins}
\usepackage{tikz}
\usepackage{pgf}
\usetikzlibrary{arrows}
\usetikzlibrary{calc}
\usetikzlibrary{decorations.pathmorphing}
\usepackage{xfrac}

\usepackage{faktor}
\usepackage{soul}
\setstcolor{red}
\setulcolor{red}

\usepackage{rotating} % needed for \begin{sidewaysfigure}\begin{sidewaystable}

\usepackage{mathrsfs}
\DeclareMathAlphabet{\mathpzc}{OT1}{pzc}{m}{it}

\usepackage{color}
\usepackage{amsmath}
\usepackage{graphicx}
\usepackage{graphics}
\usepackage{float}
\usepackage{amsfonts}
\usepackage{amssymb}%
\usepackage{latexsym}
\usepackage{psfrag}
\usepackage{subfigure}
\usepackage{accents}

\newtheorem{theorem}{Theorem}[section]
\newtheorem{informaltheorem}[theorem]{Informal Theorem}
\newtheorem{corollary}[theorem]{Corollary}
\newtheorem{definition}[theorem]{Definition}
\newenvironment{proof}[1][Proof]{\noindent \emph{#1.} }
{\hfill \ \rule{0.5em}{0.5em}}
\newtheorem{lemma}[theorem]{Lemma}
\newtheorem{proposition}[theorem]{Proposition}

\numberwithin{equation}{section}
\numberwithin{table}{section}
\numberwithin{figure}{section}

\usepackage[a4paper]{geometry}
\newtheorem{remark}[theorem]{Remark}
\newtheorem{example}[theorem]{Example}

\newcommand{\R}{\mathbb{R}}

\newcommand{\cI}{{\cal I}}

\newcommand{\cP}{{\cal P}}

\newcommand{\cR}{{\cal R}}

\newcommand{\cD}{{\cal D}}
\newcommand{\cV}{{\cal V}}
\newcommand{\cW}{{\cal W}}

\newcommand{\cY}{{\cal Y}}

\newcommand{\cZ}{{\cal Z}}

    \newcommand\quotient[2]{
        \mathchoice
            {% \displaystyle
                \text{\raise1ex\hbox{$#1$}\Big/\lower1ex\hbox{$#2$}}%
            }
            {% \textstyle
                #1\,/\,#2
            }
            {% \scriptstyle
                #1\,/\,#2
            }
            {% \scriptscriptstyle  
                #1\,/\,#2
            }
    }

    \makeatletter
\newcommand{\nameditem}[1]{%
  \item[#1]%
  \def\@currentlabel{#1}%
}
\makeatother

\newcommand{\ri}{{\rm i}}

\usepackage{enumitem}
\newcommand{\beq}{\begin{equation}}
\newcommand{\eeq}{\end{equation}}
\newcommand{\beqs}{\begin{equation*}}
\newcommand{\eeqs}{\end{equation*}}
\newcommand{\bit}{\begin{itemize}}
\newcommand{\eit}{\end{itemize}}
\newcommand{\ben}{\begin{enumerate}}
\newcommand{\een}{\end{enumerate}}
\newcommand{\bal}{\begin{align}}
\newcommand{\eal}{\end{align}}
\newcommand{\bals}{\begin{align*}}
\newcommand{\eals}{\end{align*}}
\newcommand{\bse}{\begin{subequations}}
\newcommand{\ese}{\end{subequations}}
\newcommand{\bpr}{\begin{proposition}}
\newcommand{\epr}{\end{proposition}}
\newcommand{\bre}{\begin{remark}}
\newcommand{\ere}{\end{remark}}
\newcommand{\bpf}{\begin{proof}}
\newcommand{\epf}{\end{proof}}
\newcommand{\ble}{\begin{lemma}}
\newcommand{\ele}{\end{lemma}}
\newcommand{\bco}{\begin{corollary}}
\newcommand{\eco}{\end{corollary}}
\newcommand{\bex}{\begin{example}}
\newcommand{\eex}{\end{example}}
\newcommand{\bth}{\begin{theorem}}
\newcommand{\enth}{\end{theorem}}

\def\XXint#1#2#3{{\setbox0=\hbox{$#1{#2#3}{\int}$}
     \vcenter{\hbox{$#2#3$}}\kern-.5\wd0}}

\usepackage{hyperref}
\counterwithin{figure}{section}
\definecolor{myblue}{rgb}{0,0,0.6}
\hypersetup{colorlinks=true,linkcolor=myblue,citecolor=myblue,filecolor=myblue,urlcolor=myblue}
\newcommand*{\N}[1]{\left\|#1\right\|}

\allowdisplaybreaks[4]
\newcommand{\tfa}{\text{ for all }}
\newcommand{\tfor}{\text{ for }}

\newcommand{\tin}{\text{ in }}

\newcommand{\tand}{\text{ and }}

\newcommand{\vertiii}[1]{{\left\vert\kern-0.25ex\left\vert\kern-0.25ex\left\vert #1
    \right\vert\kern-0.25ex\right\vert\kern-0.25ex\right\vert}}

\definecolor{jwcol}{RGB}{27, 137, 18}  %{rgb}{1,0.88,0.21} changed color for visibility (david)

\definecolor{dalcol}{rgb}{0.8,0,0}

\definecolor{escol}{rgb}{0,0,0.8}
\definecolor{estcol}{rgb}{0,0.5,0}
\definecolor{esnewcol}{rgb}{0,0.5,0}

\newcommand{\supp}{{\rm supp } }

\newcommand{\mythmname}[1]{\textbf{\emph{(#1)}}}

\newcommand{\tr}{{\rm tr}}

\newcommand{\e}{\epsilon}

\newcommand{\RPMLo}{R_{\rm PML, -}}

\newcommand{\bV}{\mathbf{V}}

\newcommand{\matrixI}{{\mathsf{I}}}

\newcommand{\fine}{\cV_h}
\newcommand{\coarse}{\cV_{\Hc}}

\newcommand{\subdomain}{\cV_{\ell,h}}

\newcommand{\matrixD}{\mathsf{D}}
\newcommand{\matrixA}{\mathsf{A}}

\newcommand{\matrixAzero}{\matrixA_{0}}

\newcommand{\matrixR}{\mathsf{ R}}
\newcommand{\matrixM}{\mathsf{ M}}
\newcommand{\matrixB}{\mathsf{ B}}

\newcommand{\chiellg}{\chi_\ell^{(1)}}
\newcommand{\chiellgg}{\chi_\ell^{(2)}}
\newcommand{\chiellggg}{\chi_\ell^{(3)}}
\newcommand{\chiellgggg}{\chi_\ell^{(4)}}
\newcommand{\chiell}{\chi_\ell}
\newcommand{\Hilbert}{\cV}

\newcommand{\Hc}{{H_{\rm c}}}
\newcommand{\Qcoarse}{Q^0_{\Hc}}
\newcommand{\Qsub}{Q^\ell_h}

\newcommand{\Zspace}[1]{\mathcal{Z}^{#1}}
\newcommand{\Hspace}[1]{\mathcal{H}^{#1}}

\newcommand{\ZcupHspace}[1]{\mathcal{Y}^{#1}}
\newcommand{\ZcapHspace}[1]{\mathcal{W}^{#1}}
\makeatletter
\newcommand{\settheoremtag}[1]{% \settheoremtag{<tag>}
  \let\oldthetheorem\thetheorem% Store \thetheorem
  \renewcommand{\thetheorem}{#1}% Redefine it to a fixed value
  \g@addto@macro\endtheorem{% At \end{theorem}, ...
    \addtocounter{theorem}{-1}% ...restore theorem counter value and...
    \global\let\thetheorem\oldthetheorem}% ...restore \thetheorem
  }
\makeatother

\usetikzlibrary{positioning}
\usepackage{lscape}

\usepackage{pgfplots}
\usepackage{pgfplotstable}
\usepackage{etoolbox} % For conditional logic
\pgfplotsset{compat=newest}
\usepgfplotslibrary{groupplots}
\usepgfplotslibrary{polar}
\usepgfplotslibrary{smithchart}
\usepgfplotslibrary{statistics}
\usepgfplotslibrary{dateplot}
\usepgfplotslibrary{ternary}
\usepackage{ifthen}
\usepgfplotslibrary{fillbetween}

\pgfplotscreateplotcyclelist{plt-styles-7}{
    green!70!black,  solid, line width=1.1pt\\
    cyan!70!black,   solid, line width=1.1pt\\
    blue!80!black,   solid, line width=1.1pt\\
    violet!75!black, solid, line width=1.1pt\\
    orange!90!black, solid, line width=1.1pt\\
    red!75!black,    solid, line width=1.1pt\\
}

\newcommand{\plotmethodtestv}[4]{
  \addplot+[
    line width={0.5}, mark size={1.25pt},
  ] table[x expr=\coordindex, y=#1, col sep=comma]{#4};
  \addlegendentry{#3}
}

\newcommand{\generatekv}[4]{
\plotmethodtestv{#2}{01}{$k$=#3}{#4_k#3.csv}
}

\newcommand{\generateplot}[4]{
\ifthenelse{\equal{#1}{00}} {
    \generatekv{CAP}{#4}{100} {csvs/#2}
    \generatekv{CAP}{#4}{200} {csvs/#2}
    \generatekv{CAP}{#4}{400} {csvs/#2}
    \generatekv{CAP}{#4}{600} {csvs/#2}
    \generatekv{CAP}{#4}{800} {csvs/#2}
    \generatekv{CAP}{#4}{1000} {csvs/#2}
}
{
    \ifthenelse{\equal{#1}{0X}}
    {
        \generatekv{CAP}{#4}{100.077} {csvs/#2}
        \generatekv{CAP}{#4}{200.154} {csvs/#2}
        \generatekv{CAP}{#4}{399.924} {csvs/#2}
        \generatekv{CAP}{#4}{600.078} {csvs/#2}
        \generatekv{CAP}{#4}{799.849} {csvs/#2}
        \generatekv{CAP}{#4}{1000.005} {csvs/#2}
    }
    {
        \ifthenelse{\equal{#1}{0Y}}
        {
            \generatekv{CAP}{#4}{103.092} {csvs/#2}
            \generatekv{CAP}{#4}{194.729} {csvs/#2}
            \generatekv{CAP}{#4}{400.912} {csvs/#2}
            \generatekv{CAP}{#4}{595.641} {csvs/#2}
            \generatekv{CAP}{#4}{801.824} {csvs/#2}
            \generatekv{CAP}{#4}{996.553} {csvs/#2}
        }
        {
            \generatekv{CAP}{#4}{97.196} {csvs/#2}
            \generatekv{CAP}{#4}{194.391} {csvs/#2}
            \generatekv{CAP}{#4}{404.982} {csvs/#2}
            \generatekv{CAP}{#4}{599.374} {csvs/#2}
            \generatekv{CAP}{#4}{793.765} {csvs/#2}
            \generatekv{CAP}{#4}{1004.36} {csvs/#2}
        }
    }
}
}

\newcommand{\generatepic}[4]{
\centering
\makebox[\textwidth][c]{%
\begin{tikzpicture}[scale=.8]
\begin{groupplot}[
    group style={
    group name=mygroup,
        group size=2 by 1,
        horizontal sep=1.5cm
    },
    width=0.48\textwidth,
    height=6cm,
    cycle list name={plt-styles-7},
    xmajorgrids,
    ymajorgrids,
    legend cell align={left},
    legend pos={outer north east},
    legend style={{nodes={scale=0.8, transform shape}}},
    xlabel={GMRES iteration},
    ylabel={residual},
    ymode={log},
    log basis y={10},
    y filter/.expression={\pgfmathresult < -10 ? nan : \pgfmathresult},
    xmin={0},
    xmax={40},
    ymin={1e-11},
    ymax={1e2},
]

\nextgroupplot[
    xlabel={Fixed point iteration},
    legend style={draw=none, opacity=0}
]

\generateplot{#1}{#3}{#4}{CAP#2}

\nextgroupplot[
    xlabel={GMRES iteration},
    ylabel={},
    title={}
]

\generateplot{#1}{#3}{#4}{CAPGMRES#2}

\end{groupplot}
\node[anchor=south,yshift=2mm]
    at ($(mygroup c1r1.north)!0.5!(mygroup c2r1.north)$)
    {{\bf{#4}}};

\end{tikzpicture}}
}

\newcommand{\PMLs}{g}

\definecolor{jeffColor}{RGB}{102, 0, 204}

\title{Two-level hybrid Schwarz preconditioners with preasymptotic piecewise-polynomial coarse spaces for the high-frequency Helmholtz equation
}

\author{
J.~Galkowski\thanks{Department of Mathematics, University College London, 25 Gordon Street, London, WC1H 0AY, UK,   \tt J.Galkowski@ucl.ac.uk},
\and
E.~A.~Spence\thanks{Department of Mathematical Sciences, University of Bath, Bath, BA2 7AY, UK, \tt E.A.Spence@bath.ac.uk },
\and
P.~H.~Tournier\thanks{
Sorbonne Universit\'e, Universit\'e Paris Cit\'e, CNRS, INRIA, Laboratoire Jacques-Louis Lions, LJLL, EPC ALPINES, 4 place Jussieu, Paris, F-75005, France
\tt 
pierre-henri.tournier@sorbonne-universite.fr}
}

\date{\today}

\begin{document}
\maketitle

\begin{abstract}
We consider two-level hybrid Schwarz domain-decomposition preconditioners for finite element discretizations of the high-frequency Helmholtz equation (with wavenumber $k$). To date, all existing $k$-explicit theory for these preconditioners considers coarse spaces in the \emph{asymptotic regime}, i.e., where the Galerkin solution is $k$-uniformly quasi-optimal. 

We consider the situation where the fine and coarse spaces consist, respectively, of piecewise polynomials of degree $p$ on meshes of width $h$ and $H_c$. We prove results for the coarse space in the \emph{preasymptotic regime}, under the familiar mesh threshold that $(\Hc k)^{2p} \rho(k)$ is sufficiently small, where $\rho(k)$ is the norm of the solution operator (such that $\rho(k)\sim k$ for the  Helmholtz equation in free space). We prove that, in this regime, the fixed-point iteration with this preconditioner converges in a number of iterations that grows at most like $\log k$.

We give numerical experiments demonstrating this theory.
Furthermore, these experiments indicate that our theoretical results are sharp, in that
the number of iterations required for these preconditioners to converge with 
$(\Hc k)^{2p} \rho(k) \gg 1$ grows rapidly with $k$. 
\end{abstract}

\section{Context and informal statement of the main result}

Coarse grids are 
the key to parallel scalability of domain-decomposition (DD) methods for self-adjoint positive-definite problems (such as Laplace's equation); see, e.g., \cite{ToWi:05, Wi:09}, \cite[Chapter 4]{DoJoNa:15}. 
% % However, in the context of high-frequency wave problems, such as the high-frequency Helmholtz equation, the development and theory of two-level DD methods  (see, e.g., the recent computational studies \cite{BoDoJoTo:21,DoFrLaPaTo:26} and the references therein). 
However, 
both the development of and the frequency-explicit theory of two-level DD methods for high-frequency wave problems, such as the high-frequency Helmholtz equation, are longstanding open problems 
(see, e.g., the recent computational studies \cite{BoDoJoTo:21,DoFrLaPaTo:26} and the references therein). 

The six recent papers   \cite{HuLi:24, LuXuZhZo:24, MaAlSc:24, FuGoLiWa:24, GS4, GrSp:25} 
considered preconditioning finite-element discretisations of the Helmholtz equation with two-level Schwarz DD, and 
gave sufficient conditions for either the fixed-point method or GMRES (depending on the paper) 
to converge in a $k$-independent number of iterations (working in an appropriate $k$-weighted norm). All these papers consider coarse spaces in the \emph{asymptotic} regime, i.e., where the Galerkin solution in the coarse space is $k$-uniformly quasi-optimal (with this terminology famously introduced in \cite{IhBa:95a, IhBa:97})

The present paper considers fine and coarse spaces consisting of piecewise polynomials of degree $p$ on meshes of width $h$ and $H_c$, respectively. Piecewise-polynomial coarse spaces for high-frequency wave problems have previously been studied empirically in 
\cite{GSV1, GSV2, BoDoGrSpTo:17c, DDHelmholtz, DDMaxwell, BoDoJoTo:21, SiHeDwVu:26}.
In contrast to the existing two-level analyses mentioned above, we consider the situation where the coarse 
 space is in the \emph{preasymptotic} regime; i.e., the Galerkin solution in the coarse space has $k$-uniformly bounded relative error for oscillatory data 
\cite{IhBa:97,DuWu:15, GS3} but is not necessarily $k$-uniformly quasi-optimal.
Specifically, we consider coarse spaces satisfying $(H_c k)^{2p}\rho(k)\ll 1$,
where $\rho(k)$ is the norm of the Helmholtz solution operator (and such that $\rho(k) \sim k$ for the Helmholtz equation in  free space); here and in the rest of the paper 
we use the notation that $A\ll B$ if $A\leq c B$ for a small constant $c$.

%
% with this the familiar mesh threshold needed to guarantee controllably-small relative error of the finite-element solution as $k\to \infty$ \cite{IhBa:97,DuWu:15, GS3}. % and defines the \emph{preasymptotic regime}.

In its simplest-possible form, the main result of the paper is the following.

\begin{informaltheorem}[Informal statement of the main result (Theorem \ref{t:main})]\label{t:informal} 
Consider the Helmholtz scattering problem in the exterior of a smooth, bounded Dirichlet or Neumann obstacle, with piecewise smooth coefficients (modelling a penetrable obstacle), and with an absorbing layer approximating the radiation condition. Suppose that $\rho(k)$ is polynomially bounded in $k$
 (as occurs for ``most" $k$ by \cite{LSW1}).

Consider finite-element discretisations of this problem in a space of mapped piecewise polynomials with meshwidth $h$
%, where the finite-element mesh resolves the boundaries of the obstacles and interfaces across which the coefficients jump.
and a hybrid Schwarz preconditioner with the following properties:
\begin{itemize}
    \item the subdomains are of size $\sim k^{-1}$ with overlaps $\sim k^{-1}$,
    \item the subdomain problems have absorption on the subdomain boundaries that are not part of the obstacle boundary, and
    \item the piecewise-polynomial coarse space satisfies $(\Hc k)^{2p} \rho(k)\ll 1$, and $h\ll \Hc$. 
\end{itemize}
Then the preconditioned fixed point method (and hence also GMRES) converges in a number of iterations 
%$\lesssim\log [1+\rho (k)(H_ck)^p] + 
$\lesssim \log k$.
\end{informaltheorem}

% sthe condition $(H_c k)^{2p}\rho(k)\ll 1$ required in Informal Theorem~\ref{t:informal}/Theorem \ref{t:main} is the familiar mesh threshold needed to guarantee controllably-small relative error of the finite-element solution as $k\to \infty$ \cite{IhBa:97,DuWu:15, GS3}. % and defines the \emph{preasymptotic regime}.
%(Recall that $\rho(k)$ is polynomially bounded in $k$ 
%for ``most" $k$ by \cite{LSW1}.)

% Furthermore, with $(\Hc k)^{2p} \rho(k)\sim 1$ and $\rho(k)$ polynomially bounded in $k$ (as occurs for ``most" $k$ by \cite{LSW1}), the number of iterations required in Informal Theorem~\ref{t:informal} grows at most like $\log k$.

In such a two-level preconditioner applied to the Helmholtz equation one would ideally like the coarse space to be small. However, 
the numerical experiments 
 in \S\ref{s:numerics} below 
indicate that the sufficient conditions on $\Hc$ in Informal Theorem~\ref{t:informal}/Theorem \ref{t:main} are \emph{optimal} in the sense that for piecewise polynomial coarse spaces with $(H_ck)^{2p}\rho(k)\gg 1$
the number of GMRES iterations grows quickly with $k$,
% or the number of GMRES iterations to be $k$-uniformly bounded.
% That is, we give numerical experiments where 
%  $(H_ck)^{2p}\rho(k)\gg 1$ 
%   and the number of GMRES iterations required for convergence of the preconditioned system grows significantly with $k$, 
  even when $h$ is chosen very small. 
  
The underlying reason for the requirement $(H_ck)^{2p}\rho(k)\ll 1$ is that a one-level DD method with subdomains of size $k^{-1}$  requires at least $\sim k$ iterations to resolve 
the propagation of the Helmholtz solution at length scales independent of $k$. This propagation must therefore be resolved by the coarse space, requiring 
the familiar condition
$(H_ck)^{2p}\rho(k)\ll 1$ for accuracy of the $h$-version FEM solution in the coarse space.

\section{Statement of the main result}

\subsection{Helmholtz problems}

The general Helmholtz-type PDE to which our theory applies is given by \eqref{e:PML1} below. We arrive at this PDE motivated by scattering problems with the radiation condition approximated by an absorbing layer, such as a perfectly matched layer (PML) \cite{Be:94, CoMo:98} or a complex absorbing potential (CAP) \cite{RiMe:93}.

To define the scattering problem, let $\Omega_-\subset \mathbb{R}^d$ be a bounded open obstacle with connected complement, $\Omega_+:=\mathbb{R}^d\setminus \overline{\Omega_-}$.
Furthermore, let $A$ be a symmetric, positive-definite matrix with real coefficients that equals the identity outside a compact set and let
$n$ be a strictly positive, real-valued function that equals one outside a compact set. 
We consider the Helmholtz scattering problem:~given $f\in L^2_{\rm comp}(\overline{\Omega_+})$, find $u\in H^1_{\rm loc}(\overline{\Omega_+})$ such that
\begin{equation}
\label{e:edp}
-k^{-2}\operatorname{div}( A\nabla u)-n u=f\text{ in }\Omega_+,\qquad (\mathcal{B}u)|_{\partial\Omega_+}=0,\qquad (k^{-1}\partial_r-i)u=o_{r\to \infty}(r^{\frac{1-d}{2}})
\end{equation}
where $\Gamma_s := \partial\Omega_+=\Gamma_{\textup{hard}}\sqcup \Gamma_{\textup{soft}} $ with $\Gamma_{\textup{hard}}$ and $\Gamma_{\textup{soft}}$ smooth, closed hypersurfaces and
$(\mathcal{B}u)|_{\Gamma_{\textup{soft}}}= u$ in the sound-soft case and $(\mathcal{B}u)|_{\Gamma_{\textup{hard}}}= \nu\cdot(A\nabla u)$, with $\nu$ the normal to $\Gamma_s$ %(relative to the matrix $A$) 
in the sound-hard case.

We then approximate the Sommerfeld radiation condition in \eqref{e:edp} using an absorbing layer. Let $\Omega_{\tr} \subset \mathbb{R}^d$ be open, bounded, connected and contain  
$\overline{\Omega_-}\cup \supp (A-I)\cup \supp (n-1)$. We truncate the problem~\eqref{e:edp} to the computational domain $\Omega:=\Omega_+\cap \Omega_{\tr}$ and apply the finite-element method to the problem:~given $f\in L^2(\Omega)$, find $u\in H^1(\Omega)$ such that
\begin{equation}
\label{e:PML1}
Pu:=-k^{-2}\operatorname{div}(\tilde{A}\nabla u)-ik^{-1} \tfrac{1}{2}(\tilde{b}\cdot\nabla u+\operatorname{div}(\tilde{b}u))-\tilde{n} u=f\text{ in }\Omega,\quad (\tilde{\mathcal{B}}u)|_{\partial\Omega_+}=0,\quad u|_{\partial\Omega_{\tr}}=0,
\end{equation} 
where $\tilde{B}u|_{\Gamma_{\rm soft}}=u$, $\tilde{B}u|_{\Gamma_{\rm hard}}=\nu \cdot 
(\tilde{A}k^{-1}\nabla u+\frac{i}{2} \tilde{b}u)$ and we assume that $\tilde{A}$ is symmetric.
(Note that if $\tilde{A}$, $\tilde{b}$, and $\tilde{n}$ are real then \eqref{e:PML1} is self adjoint -- this is the reason we have written the Helmholtz PDE in this form). When approximating solutions to~\eqref{e:edp} using a perfectly matched layer (PML), $\tilde{A}$, $\tilde{b}$ and $\tilde{n}$ are respectively equal to $A$, $0$, and $n$ near $\supp (A-I)\cup (n-1)\cup \Omega_-$, differing only because of the addition of the PML.

We denote the sesquilinear form associated to~\eqref{e:PML1} by $a=a(k):\cZ\times \cZ\to \mathbb{C}$; i.e., 
\begin{equation} \label{e:quadratic}
a(u,v):=\langle \tilde{A}k^{-1}\nabla u,k^{-1}\nabla v\rangle_{\Omega}-\tfrac{i}{2}\langle \tilde{b}\cdot k^{-1}\nabla u,v\rangle_{\Omega}+\tfrac{i}{2}\langle  \tilde{b}u, k^{-1}\nabla v\rangle_{\Omega} -\langle \tilde{n}u,v\rangle_{\Omega},
\eeq
and 
\beq\label{e:cZ}
\cZ:=\big\{ u\in  H^1(\Omega)\,:\, u|_{\partial\Omega_{\tr}}=0,\, u|_{\Gamma_{\rm soft}}=0\big\}
\eeq
with norm
\beqs
\|u\|^2_{\cZ}:=\|u\|^2_{H^1_k(\Omega)}:= \|k^{-1}\nabla u\|^2_{L^2(\Omega
)}+\|u\|^2_{L^2(\Omega)}.
\eeqs
For a sesquilinear form $a:\cZ\times \cZ\to \mathbb{C}$, we denote by
$$
\rho(a)
:= \bigg(
\inf_{0\neq u \in \cZ} \sup_{0\neq v\in \cZ} \frac{|a(u,v)|}{\|u\|_{\cZ} \|v\|_{\cZ}}
\bigg)^{-1}
$$
the norm of the solution operator (a.k.a.~the continuous inf-sup constant).

\begin{definition}[Helmholtz problem with absorption and truncation]\label{d:Helmholtz}
    We say that the sesquilinear form $a$ in~\eqref{e:quadratic} is a piecewise $C^{p+1}$ Helmholtz problem with absorption and $C^r$ truncation if the following holds.
    %~\eqref{e:absorbing} holds, 

(i) There is $\theta_0\in\mathbb{R}$ such that  \beq\label{e:strongElliptic}
\inf_{k>k_0}\Re \big(e^{i\theta_0}\tilde{A}^{ij}(x)\xi_i\xi_j\big)\geq c|\xi|^2,\qquad x\in \Omega,\,\xi\in\mathbb{R}^d.
\eeq    

    (ii) $\partial\Omega_-\in C^{p,1}$, $\partial\Omega_{\tr}\in C^{r-1,1}$, and there is  $\Gamma_p\Subset \Omega$ a $C^{p,1}$, smooth hypersurface such that $\tilde{A}$, $\tilde{b}$, and $\tilde{n}$ are all $\overline{C^{p-1,1}}(\Omega\setminus\Gamma_p)$.

(iii) There is a neighborhood, $U$ of $\partial\Omega_{\tr}$ such that for all $k_0>0$ there are  $c>0$ and $\theta_1,\theta_2\in\mathbb{R}$ such that for $k\geq k_0$,
\begin{equation}
%\begin{gathered}
\label{e:absorbing}
\inf_{k>k_0}\Re \big(e^{i\theta_1}\big(\tilde{A}^{ij}(x)\xi_i\xi_j +\tilde{b}^i(x)\xi_i-\tilde{n}(x)\big)\big)\geq c(|\xi|^2+1),\qquad x\in U,\,\xi\in\mathbb{R}^d
\eeq
and
\beq\label{e:imagParts}
\Big(\supp \Im \big(e^{i\theta_2}\widetilde{A}\big)\cup \supp \Im \big(e^{i\theta_2}\widetilde{b}\big)\cup \supp \Im \big(e^{i\theta_2}\widetilde{n}\big)\Big)\cap \Big(\overline{(\Gamma_p\cup \partial\Omega_-)\setminus \overline{U}}\Big)=\emptyset.
\end{equation}
    \end{definition}
We immediately highlight that, by multiplying $P$ by a constant, one of $\theta_0,\theta_1$, and $\theta_2$ can be set to zero.        Point (i) of Definition \ref{d:Helmholtz} is that $P$ is strongly elliptic in $\Omega$ (in the sense of, e.g., \cite[Page 122]{Mc:00}).

    In Point (ii) of Definition \ref{d:Helmholtz}, 
    we use the notation that a function $u\in \overline{C^{r,1}}(V)$ if $u\in C^{r,1}(V)$ and 
    $$
    \|\partial^\alpha u\|_{L^\infty(V)}<\infty ,\qquad |\alpha|\leq r+1
    $$ (i.e., $u$ is $C^{r,1}$ ``up to the boundary" of $V$).

 In Point (iii) of Definition \ref{d:Helmholtz},
the condition \eqref{e:absorbing} corresponds to $P$ being ``absorbing" in $U$, and, with $\theta_2=0$, the condition \eqref{e:imagParts} says that any absorption in the coefficients away from $U$ is also away from the boundary of the scatterer $\partial\Omega_-$  and the interfaces $\Gamma_p$ (i.e., the Helmholtz problem is self adjoint near the parts of $\partial\Omega_-$ and $\Gamma_p$ that are away from $U$).

Note that the domain of the unique, closed, unbounded operator on $L^2$ generated by a piecewise $C^{1}$ Helmholtz problem with $C^1$ truncation is given by
$$
\Bigg\{u\in H^1(\Omega)\cap H^2(\Omega\setminus \Gamma_p)\,:\, \begin{gathered}\nu \cdot \big(\tilde{A}k^{-1}\nabla u+\tfrac{i}{2}\tilde{b}u\big)|_{\Gamma_{\rm hard}}=0,\, u|_{\Gamma_{\rm soft}\cup \Gamma_{\tr}=0},\\
\nu \cdot \big(\tilde{A}_+k^{-1}\nabla u_+-\tilde{A}_-k^{-1}\nabla u_-+\tfrac{i}{2}(\tilde{b}_+-\tilde{b}_-)u\big)|_{\Gamma_p}=0\end{gathered}\Bigg\},
$$
where the notation $\tilde{A}_{\pm}$, $\tilde{b}_{\pm}$, and $u_{\pm}$ denotes restriction from either side of $\Gamma_p$. 

\subsection{Components of the two-level hybrid Schwarz preconditioner}

The definition of the two-level hybrid Schwarz preconditioner involves the following components, which are defined precisely in the following subsections.
\begin{enumerate}
    \item A Helmholtz problem with absorption and truncation, $a$ (in the sense of Definition \ref{d:Helmholtz} above).
    %    a piecewise $C^{p+1}$ 
    %Helmholtz problem with %absorption and $C^r$ %truncation, $a$,
    \item Two open covers of $\Omega$, $\{\Omega_{\ell}\}_{\ell=1}^N$ and $\{W_\ell\}_{\ell=1}^N$ with 
    \begin{equation} \label{e:subdomainThing1}
    W_\ell \subset \Omega_\ell \quad\tand\quad\partial W_\ell\cap \partial\Omega_\ell\subset \partial \Omega.
    \end{equation}
(Here, $W_\ell$ will be where the Helmholtz problem on the $\ell$th subdomain equals the underlying Helmholtz problem; i.e., for $\Omega_\ell$ not touching $\partial \Omega$, $W_\ell$ is inside $\Omega_\ell$).
        \item Open sets $\{U_\ell\}_{\ell=1}^N$ with $U_\ell\subset \Omega_\ell$ and
    \begin{equation}
\label{e:subdomainThing2}
         \partial\Omega_{\ell}\setminus \partial\Omega \subset \overline{U}_{\ell}.
    \end{equation}
(Here, $U_\ell$ will be the absorbing part of the Helmholtz problem on the $\ell$th subdomain.)
    \item 
   Subdomain problems  
   on $\Omega_{\ell}$ 
   %$C^1$ 
   for $(a,W_\ell,U_\ell)$, $\{a_\ell\}_{\ell=1}^N$.
    \item A partition of unity $\{\chi_\ell\}_{\ell=1}^N$
    subordinate to $W_\ell$.
    \item Coarse and fine spaces, $\coarse$ and $\fine$ with $\coarse\subset \fine$.
    \item Maps $\mathcal{I}_\ell:H_{\partial\Omega}^1(\Omega_\ell)\cap \overline{C^0}(\Omega_\ell)\to \subdomain$ where
    \beq\label{e:H1partial}
    H^1_{\partial\Omega}(\Omega_\ell):=\big\{u\in H^1(\Omega_\ell)\,:\, u|_{\partial\Omega_\ell\setminus \partial\Omega}=0,\, u|_{\Gamma_{\rm soft}}=0,\, u|_{\Gamma_{\tr}}=0\big\},\quad \subdomain:=\fine\cap H_{\partial\Omega}^1(\Omega_\ell)
    \eeq
(the standard choice of $\cI_\ell$ is nodal interpolation on Lagrange points).
\end{enumerate}
The geometric set up described here is illustrated in Figure~\ref{f:geometry}.

\begin{figure}
    \centering
    \definecolor{globalyellow}{RGB}{250,235,140}
\definecolor{localblue}{RGB}{15,79,153}
\definecolor{localbluefill}{RGB}{183,215,242}
\definecolor{wviolet}{RGB}{137,35,143}
\definecolor{light-gray}{RGB}{200,200,200}

\begin{tikzpicture}[line cap=round,line join=round]

% Smooth outer boundary of Omega and smooth inner boundary of the global layer U.
% \def\outerOmega{%
%   (-6.05,-.05)
%   .. controls (-6.15,1.72) and (-4.62,3.12) .. (-2.48,3.55)
%   .. controls (-.35,4.02) and (2.36,3.58) .. (4.10,2.58)
%   .. controls (5.57,1.74) and (6.32,.63) .. (6.12,-.55)
%   .. controls (5.90,-1.87) and (4.30,-2.70) .. (2.70,-3.13)
%   .. controls (1.85,-3.40) and (1.25,-3.50) .. (.86,-3.55)
%   .. controls (.72,-3.58) and (.67,-3.59) .. (.60,-3.59)
%   .. controls (.30,-3.63) and (-.12,-3.62) .. (-.42,-3.60)
%   .. controls (-.50,-3.59) and (-.60,-3.58) .. (-.68,-3.56)
%   .. controls (-1.15,-3.54) and (-1.80,-3.45) .. (-2.12,-3.38)
%   .. controls (-2.75,-3.25) and (-3.50,-2.92) .. (-4.05,-2.62)
%   .. controls (-5.45,-1.92) and (-6.18,-1.02) .. cycle}

% \def\innerOmega{%
%   (-5.25,-.08)
%   .. controls (-5.30,1.25) and (-4.12,2.35) .. (-2.48,2.72)
%   .. controls (-.58,3.14) and (.67,2.89) .. (1.92,2.30)
%   .. controls (3.25,1.71) and (4.62,1.67) .. (5.06,.57)
%   .. controls (5.57,-.58) and (4.38,-1.75) .. (2.91,-2.19)
%   .. controls (2.15,-2.45) and (1.25,-2.62) .. (.60,-2.70)
%   .. controls (.30,-2.74) and (-.12,-2.77) .. (-.42,-2.76)
%   .. controls (-1.45,-2.74) and (-2.55,-2.55) .. (-3.31,-2.18)
%   .. controls (-4.57,-1.66) and (-5.30,-.90) .. cycle}

\def\outerOmega{(0,.05) ellipse (6.15 and 3.65)}
\def\innerOmega{(0,.05) ellipse (5.25 and 2.75)}

% Smooth outer arc of a two-dimensional Helmholtz resonator.
\def\resonatorOuter{%
(.2,.3)ellipse(.9 and 1.4);
  % (-.24,-.84)
  % .. controls (-.95,-.62) and (-1.08,.40) .. (-.63,1.03)
  % .. controls (-.10,1.62) and (.72,1.64) .. (1.16,1.06)
  % .. controls (1.58,.45) and (1.36,-.58) .. (.64,-1)
  }

% Invisible mask for the entire resonator interior.  It prevents a subdomain
% drawn behind the resonator from reappearing inside its cavity.
\def\resonatorMask{%
  \resonatorOuter
  .. controls (.44,-1.03) and (-.04,-1.03) .. cycle}

% Smooth C-shaped wall.  The two neck tips and both circular arcs are joined
% only by Bezier curves, so partial Omega_- has no angular corners.
\def\resonator{%
  \resonatorOuter
  .. controls (.46,-.99) and (.32,-.70) .. (.48,-.54)
  .. controls (.84,-.27) and (.88,.35) .. (.48,.70)
  .. controls (.15,.98) and (-.25,.91) .. (-.4,.58)
  .. controls (-.70,.08) and (-.3,-.38) .. (-.08,-.54)
  .. controls (.08,-.66) and (-.06,-.93) .. (-.24,-.84)
  -- cycle}

% Complete (solid) shading of the global absorbing layer.
\path[fill=globalyellow,even odd rule] \outerOmega\space \innerOmega;

% Omega_3: a rotated right-angled box clipped by the resonator.  The covered
% lower-right portion is replaced by the coincident boundary of Omega_-.
\begin{scope}[shift={(-1.08,1.22)},rotate=31]
  \path[fill=localbluefill,even odd rule]
    (-.74,-.91) rectangle (.74,.91)
    (-.52,-.68) rectangle (.52,.68);
  \draw[localblue,very thick] (-.74,-.91) rectangle (.74,.91);
  %\draw[localblue,thick] (-.52,-.68) rectangle (.52,.68);
  % W_3 lies wholly inside the unshaded core, with a visible gap from U_3.
  % It still extends slightly into Omega_-; drawing the resonator later clips
  % it exactly, so the visible W_3 touches the shared physical boundary.
  \fill[wviolet!88]
    (-.42,-.58) rectangle (.42,.58);
\end{scope}

% Omega_1: upright rectangular subdomain on the right.
\path[fill=localbluefill,even odd rule]
  (2.72,-.62) rectangle (4.05,1.09)
  (2.96,-.34) rectangle (3.81,.77);
\draw[localblue,very thick] (2.72,-.62) rectangle (4.05,1.09);
%\draw[localblue,thick] (2.96,-.34) rectangle (3.81,.77);
\fill[wviolet!88]
  (3.05,-.24) rectangle (3.71,.67);

% Omega_2: its three artificial sides are straight and right-angled, while
% its fourth side is exactly the corresponding curved segment of dOmega.
% A translucent blue fill allows the yellow global layer to remain visible
% beneath the local layer in their overlap.
\begin{scope}
  \clip \outerOmega;
  \path[fill=localbluefill,fill opacity=.58]
    (-.68,-4.0) rectangle (.86,-2.05);
  \draw[localblue,very thick]
    (-.68,-4.0) -- (-.68,-2.05) -- (.86,-2.05) -- (.86,-4.0);
\end{scope}
% The white core is clipped to the inner ellipse. Thus yellow U and blue U_2
% extend precisely to the inner solid ellipse, with no hand-matched curve.
\begin{scope}
  \clip \innerOmega;
  \path[fill=white] (-.42,-4.0) rectangle (.60,-2.31);
  %\draw[localblue,thick]
   % (-.42,-4.0) -- (-.42,-2.31) -- (.60,-2.31) -- (.60,-4.0);
\end{scope}
% W_2 is clipped by Omega and therefore shares a nontrivial boundary segment.
\begin{scope}
  \clip \outerOmega;
  \fill[wviolet!88,opacity=.88] (-.3,-3.7) rectangle (.5, -2.4);
\end{scope}

% Redraw the two smooth boundaries so their geometry remains visually clear.
\draw[black,very thick] \outerOmega;
%\draw[black,thick] \innerOmega;

% Mask both the wall and cavity before drawing the resonator.  Consequently
% U_3 stops at the outside of the resonator and cannot enter its interior.
\path[fill=white] \resonatorMask;
% Drawing the smooth wall after Omega_3 makes their coincident boundary exact.
\path[fill=light-gray,draw=black,very thick] \resonator;

% Global labels.
\node[font=\Large] at (4.5,3.17) {$\Omega$};
\node[font=\Large] at (.05,3.28) {$U$};
\node[font=\normalsize] at (.2,.3) {$\Omega_-$};

% Labels and leader lines for Omega_1, U_1, W_1.
\node[font=\large] at (3.47,1.38) {$\Omega_1$};
\node[localblue,font=\large] at (2.2,1.4) {$U_1$};
\draw[localblue,thick]
  (2.26,1.18) .. controls (2.30,.95) and (2.48,.85) .. (2.75,.82);
\node[wviolet,font=\large] at (4.73,-.05) {$W_1$};
\draw[wviolet,thick]
  (4.38,-.02) .. controls (4.05,.03) and (3.80,.11) .. (3.69,.15);

% Labels and leader lines for Omega_2, U_2, W_2.
\node[font=\large] at (-.20,-1.77) {$\Omega_2$};
\node[localblue,font=\large] at (1.53,-1.95) {$U_2$};
\draw[localblue,thick]
  (1.26,-2.02) .. controls (1.04,-2.15) and (.90,-2.31) .. (.72,-2.43);
\node[wviolet,font=\large] at (.66,-4.16) {$W_2$};
\draw[wviolet,thick]
  (.56,-3.87) .. controls (.37,-3.68) and (.25,-3.52) .. (.20,-3.43);

% Labels and leader lines for the rotated Omega_3.
\node[font=\large] at (-2.05,2.18) {$\Omega_3$};
\node[localblue,font=\large] at (-.22,2.05) {$U_3$};
\draw[localblue,thick]
  (-.52,2.05) .. controls (-.6,2.01) and (-.72,1.99) .. (-.84,1.94);
\node[wviolet,font=\large] at (-2.74,.20) {$W_3$};
\draw[wviolet,thick]
  (-2.55,.34) .. controls (-2.20,.66) and (-1.72,.98) .. (-1.23,1.08);

\end{tikzpicture}
    \caption{\label{f:geometry}The diagram shows an example of a domain $\Omega $ with an obstacle $\Omega_-$ and its absorbing layer, $U$. Also shown are three subdomains, $\Omega_j$, $j=1,2,3,$ with their absorbing layers, $U_j$ and regions of agreement with the global problem, $W_j$.}
    \label{fig:placeholder}
\end{figure}

\subsection{The subdomain problems}

%Given a piecewise $C^{p+1}$ Helmholtz problem with absorption and $C^r$ truncation, $a$, we now define subdomain problems. 

We now define the subdomain problems. This definition requires somewhat complicated assumptions about the absorption at the boundaries of the subdomains (similar to those in the definition of the underlying Helmholtz problem; i.e., Definition \ref{d:Helmholtz}). However, we highlight that 
these conditions are all met 
if $\tilde{A}, \tilde{b}$, and $\tilde{n}$ are all real away from the absorption approximating the radiation condition (i.e., one starts from a problem with no absorption) and the absorption in the subdomain problems is a complex absorbing potential.

\begin{definition}[Subdomain problems]
Given a piecewise $C^{p+1}$ Helmholtz problem with absorption and $C^r$ truncation, $a$ \eqref{e:quadratic}, 
let $c_\ell>0$, $W_\ell\subset \Omega_\ell\subset \Omega$ and $U_\ell \subset \Omega_\ell$ open such that 
$$
\overline{W}_\ell \cap (\partial\Omega_{\ell}\setminus \partial\Omega)=\emptyset, \quad \partial\Omega_{\ell}\setminus \partial\Omega \subset \overline{U}_{\ell}.
$$ 
Let
$$
\cZ_\ell:=\big\{ u\in H^1(\Omega)\,:\, u|_{\Gamma_{\rm soft}}=0, \,\supp \,u\subset \overline{\Omega_{\ell}}\big\}.
$$
We say that $a_{\ell}:\cZ_\ell\times \cZ_{\ell}\to \mathbb{C}$ is a piecewise $C^{p+1}$ Helmholtz subdomain problem on $\Omega_\ell$ with absorption and Lipschitz truncation for $(a, W_\ell, U_\ell)$ on $\Omega_\ell$ with constant $c_\ell$ 
%and $\mu$-sized advection 
if $\partial\Omega_\ell\in C^{0,1}$
$$
a_\ell( u, v)=\langle A_\ell k^{-1}\nabla u,k^{-1}\nabla v\rangle_{\Omega_\ell}-\tfrac{i}{2}\langle b_\ell\cdot k^{-1}\nabla u,v\rangle_{\Omega_\ell}+\tfrac{i}{2}\langle  b_\ell u, k^{-1}\nabla v\rangle -\langle n_\ell u,v\rangle_{\Omega_ell},
$$
with $A_\ell,b_\ell,n_\ell\in \overline{C^{p-1,1}}(\Omega_\ell\setminus \Gamma_p)$, 
\begin{gather}\label{e:subdomainCoeff1}
\big(\supp (A_\ell-\tilde{A})\cup \supp (b_\ell-\tilde{b})\cup \supp (n_\ell-\tilde{n})\big)\cap W_\ell=\emptyset,\\ \text{\eqref{e:strongElliptic}  and   \eqref{e:absorbing}  hold with $(\tilde{A}, \tilde{b},\tilde{n},c,U,\Omega)$ replaced by $(A_\ell, b_\ell,n_\ell,c_\ell,U_\ell,\Omega_\ell)$},\label{e:absorbingUl}\\
\big(\supp \Im (e^{i\theta_2}A_\ell)\cup \supp \Im (e^{i\theta_2}b_\ell)\big)\cap (\overline{(\partial\Omega_\ell\cup \Gamma_p)\setminus \overline{U}_\ell})=\emptyset,\label{e:subdomainCoeff2}
\end{gather}
where $\theta_2$ is as in 
\eqref{e:imagParts}.

We sometimes write that $a_\ell$ is a subdomain problem on $\Omega_\ell$ for $a$ if there exist $U_\ell,W_\ell,c_\ell$ such that $a_\ell$ is a subdomain problem on $\Omega_\ell$ for $(a,W_\ell,U_\ell)$ with constant $c_\ell$. 
\end{definition}

Regarding the three conditions \eqref{e:subdomainCoeff1}, 
\eqref{e:absorbingUl},
\eqref{e:subdomainCoeff2}:~the conditions \eqref{e:subdomainCoeff1} and \eqref{e:absorbingUl} imply that the region $W_\ell$ is where the subdomain problem agrees with the global Helmholtz problem and $U_\ell$ is the part of $\Omega_\ell$ in which there is absorption. 
The condition~\eqref{e:subdomainCoeff2} with $\theta_2=0$ requires that 
%, when the original problem is self adjoint near $\Gamma_p$ and $\partial\Omega_-$ and 
when 
$\Omega_\ell$ touches $\Gamma_p$ or $\partial\Omega_-$, the subdomain absorption at the boundary of the absorbing layer %absorption in the subdomain problem 
must be implemented with a CAP (involving complex $\tilde{n}$) rather than a PML. 

%Finally,~\eqref{e:advection1} is used to guarantee that $a_\ell$ is, in fact, coercive on functions supported in $U_\ell$. For both PML or CAP applied to~\eqref{e:edp}, \eqref{e:advection1} is satisfied for any $\mu>0$ since, in these cases, $|b|\leq Ck^{-1}$ or $b=0$ respectively.

To state the main result below, we need a quantitative notion of the scale of functions, sesquilinear forms, and Lipschitz domains.

\begin{definition}[Scale of a function]
Let $\{D_\alpha\}_{\alpha\in\mathbb{N}^d}\subset (0,\infty)$. We say that the scale of $\chi\in \overline{C^{m-1,1}}(\Omega\setminus \Gamma_p)$ has with respect to $\{D_{\alpha}\}$ is 
$$
\delta(\chi,\{D_\alpha\},m):=\begin{cases} \Big(\sup_{1\leq|\alpha|\leq m}\big(D_{\alpha}^{-1}\|\partial^\alpha \chi\|_{L^\infty(\Omega\setminus\Gamma_p)}\big)^{\frac{1}{|\alpha|}}\Big)^{-1}&\text{ if }\|\chi\|_{L^\infty}\leq D_0\\
0&\text{else}.
\end{cases}
$$
\end{definition}

Observe that if $\chi
\in \overline{C^{m-1,1}}(\Omega\setminus \Gamma_p)$ has scale $\delta$ with respect to $\{D_\alpha\}$ then
$$
\|\partial^\alpha \chi\|_{L^\infty(\Omega\setminus\Gamma_p)}\leq D_\alpha \delta^{-|\alpha|}.
$$

Given the notion of the scale of a function, we define the scale of the sesquilinear form via the scale of its coefficients. 

\begin{definition}[Scale of a sesquilinear form]
Let $m\geq0$ and $\{D_\alpha\}_{\alpha\in\mathbb{N}^d}\subset (0,\infty)$. We say that the \emph{scale of a sesquilinear form $a$} given by
$$
a(u,v)=\langle Ak^{-1}\nabla u,k^{-1}\nabla v\rangle 
-\tfrac{i}{2}\langle b\cdot k^{-1}\nabla u,v\rangle+\tfrac{i}{2}\langle  b u, k^{-1}\nabla v\rangle
-\langle nu,v\rangle
$$
is 
$$
\delta(a,\{D_\alpha\},m+1):=\min\Big\{\min_{i,j}\delta(A^{ij},\{D_\alpha\},m),\,\min_{i}\delta(b^i,\{D_\alpha\},m),\,\delta(n,\{D_\alpha\},m)\Big\}.
$$
\end{definition}

We now define precisely the notion of the scale of a domain. Roughly speaking, a Lipschitz domain has scale $\delta$ if after zooming in to scale $\delta$, one obtains a Lipschitz domain with both the Lipschitz constant and the number of domains in the covering controlled independently of $\delta$.

\begin{definition}[Scale of a Lipschitz domain]
Let $D>0$ and define for $r>0$ 
$$
V_r^1:=\{x\in\mathbb{R}^d\,:\, |x|<r\},\quad V_r^2:=\{(x_1,x')\in (0,r)\times\mathbb{R}^{d-1}\,:\, |x'|<r\}.
$$
We say that $\Omega\in C^{0,1}$ has scale $\delta$ with respect to $D$ if there are $
\{U^j_i\}_{i=1}^{N_j}\subset \delta^{-1}\Omega$, $j=1,2$, with  $N_j\leq D$
$$
\overline{U^1_i}\cap (\delta^{-1}\partial\Omega)=\emptyset,
$$
and $\{F^j_i\}_{i=1}^{N_j}$, $F^j_i:\overline{V_2^j}\to \overline{U^j_i}$ bi-Lipschitz maps such that 
\begin{gather*}
\max_{i,j}\big\{\|D F^j_i\|_{L^\infty},\|(DF^j_i)^{-1}\|_{L^\infty}\big\}\leq D ,\\
F^2_{i}(\{x_1>0\}\cap \overline{V^2})\subset \delta^{-1}\Omega,\quad F^2_{i}(\{x_1=0\}\cap \overline{V_2^2})=\partial(\delta^{-1}\Omega)\cap \overline{U_i^2},\\
\delta^{-1}\Omega\subset \bigcup_{j=1,2}\bigcup_{i=1}^{N_j}F^j_i(V^j_{1}).
\end{gather*}
We then write 
$$
\delta(\Omega,D):=\sup\big\{ \delta>0\,:\, \Omega\text{ has scale }\delta\text{ with respect to } D\big\}.
$$
\end{definition}

\subsection{The finite-element space}

We consider finite element spaces consisting of mapped piecewise polynomials on curved elements. Specifically, for $\mathcal{T}$ a simplical, affine conforming triangulation of $\Omega$ with element maps $\{\mathcal{F}_T\}_{T\in\mathcal{T}}$ and reference element $\widehat{T}$ (in the sense of, e.g., \cite[\S A.1]{GSAN}), we denote the space of mapped polynomials of degree $p$ on the triangulation $\mathcal{T}$ by
$$
V^p(\mathcal{T},\{\mathcal{F}_{T}\}_{T\in\mathcal{T}}):=\big\{ u\in H^1(\Omega)\,:\, u|_{T}\circ \mathcal{F}_T\in\mathbb{P}^{p}(\widehat{T})\big\}.
$$
We then let 
$$
V_{\mathcal{T}}^p:=V(\mathcal{T},\{\mathcal{F}_T\}_{T\in\mathcal{T}})\cap \cZ.
$$
For a triangulation $\mathcal{T}$ and $U\subset \Omega$, we denote the maximal width of an element by 
$$
h(\mathcal{T},U):=\sup_{T\in\mathcal{T},T\cap U\neq \emptyset}\operatorname{diam}(T),
$$
  and the \emph{local uniformity constant of $\mathcal{T}$ at scale $\e$} by
$$
\mathcal{U}(\mathcal{T},\e):=\sup_{x\in\Omega}\sup_{\substack{T_1,T_2\in \mathcal{T}\\T_1\cap B(x,\e)\neq \emptyset\\ T_2\cap B(x,\e)\neq \emptyset}}\frac{\operatorname{diam}(T_1)}{\operatorname{diam}(T_2)}.
$$

\subsection{
%Partitions and 
Definition of the two-level hybrid Schwarz preconditioner}

Denote the respective inclusion maps by 
$\iota :\fine\to \cZ $, $\iota_0:\coarse\to \cZ$,
$\iota_\ell:\subdomain \to H^1_{\partial\Omega}(\Omega_\ell)$,
%(where both $\subdomain$ and 
% $H^1_{\partial\Omega}(\Omega_\ell)$ are defined by 
% \eqref{e:H1partial}),  
$\iota_{\ell,h}:\subdomain\to \fine$, and $\iota_{0,h}:\coarse\to \fine$.  %the inclusion maps. 
At the level of functions, the hybrid Schwarz left-preconditioner for $P$ is $B_L^{-1}(P):(\fine)^*\to \fine$
\beq\label{e:BLinverse}
B_L^{-1}=B_L^{-1}(P):= \iota_{0,h}A_0^{-1}\iota_{0,h}^* +\Big(\sum_{\ell=1}^N \iota_{\ell,h} \mathcal{I}_\ell \chi_\ell\iota_\ell A_\ell ^{-1}\iota_{\ell,h}^*\Big) (I-A\iota_{0,h}A_0^{-1}\iota_{0,h}^*),
\eeq
where $\mathcal{I}_\ell:H_{\partial\Omega}^1(\Omega_\ell)\cap \overline{C^0}(\Omega_\ell)\to \subdomain$ and 
$$
A_0:=\iota_0^*P\iota_0,\qquad A:=\iota^*P\iota,\qquad A_\ell:=\iota^*_\ell P_\ell \iota_\ell,
$$
where $P$ is the operator defined by $a$ and $P_\ell$ that defined by $a_\ell$. 
Note that $\iota_{\bullet}^*$ is the 
adjoint of $\iota_{\bullet}$; recall that this can be identified with the corresponding $L^2$ orthogonal projector. 

Fix a basis $\{\phi_i\}_{i=1}^L$ of $\fine$ with mass matrix $\matrixM_{ij}=\langle \phi_j,\phi_i\rangle$, as well as bases $\{\phi_i^c\}_{i=1}^{L_c}$ of $\coarse$, and $\{\phi_{i,\ell}\}_{i=1}^{L_\ell}$ of $\subdomain$ respectively.
Then set
\begin{gather*}
(\matrixR_\ell)^*_{ij}:=\sum_{m}\matrixM^{-1}_{im} \langle \phi_{j,\ell} ,\phi_{m}\rangle,\qquad (\matrixR_0)^*_{ij}:=\sum_{m}\matrixM^{-1}_{im}\langle \phi_j^c,\phi_m\rangle,\\
(\matrixR_{\ell}^{\chi_\ell})^*_{ij}:= \sum_m\matrixM^{-1}_{im} \langle \mathcal{I}_\ell \chi_\ell \phi_{j,\ell},\phi_m\rangle,\\
(\matrixA_0)_{ij}:=a(\phi^c_j,\phi^c_i),\qquad (\matrixA_\ell)_{ij}:=a_\ell(\phi_{j,\ell},\phi_{i,\ell}),\qquad \matrixA_{ij}:=a(\phi_j,\phi_i).
\end{gather*}
At the level of matrices, the two-level hybrid-Schwarz left preconditioner is then given by
\beq\label{e:matrixB}
\matrixB_L^{-1}= \matrixB_L^{-1}(\matrixA):= 
\matrixR_0^*\matrixAzero^{-1} \matrixR_0 
+
\bigg(\sum_{\ell=1}^N
\big(\matrixR_\ell^{\chiell}\big)^*\matrixA_\ell^{-1} \matrixR_\ell\bigg)
\big(\matrixI -\matrixA\matrixR_0^*\matrixAzero^{-1} \matrixR_0 \big)
\eeq
and the right preconditioner given by
\beqs
\matrixB_R^{-1}=\matrixB_R^{-1}(\matrixA)
:=\matrixR_0^*\matrixAzero^{-1} \matrixR_0 
+
\big(\matrixI -\matrixR_0^*\matrixAzero^{-1} \matrixR_0\matrixA\big)
\bigg(\sum_{\ell=1}^N
\matrixR_\ell^*\matrixA_\ell^{-1} \matrixR_\ell^{\chiell} \bigg).
\eeqs
To state our theorem, we need a few more definitions.
\begin{definition}
    Let $\{\Omega_\ell\}_{\ell=1}^N$ be subsets of $\Omega$. We define the \emph{$k^{-1}$-overlap growth} of $\{\Omega_{\ell}\}_{\ell=1}^N$, by  
$$
\Lambda(\{\Omega_{\ell}\},k):=\max_{\ell=1,\ldots,N}\sup_{R\geq 0}\tfrac{1}{\log( 2+R)}\log\Big(\#\big\{ j\,:\, kd(\Omega_j,\Omega_\ell)\leq R\}\Big).
$$
\end{definition}
That is, $\Lambda(\{\Omega_{\ell}\},k)$ is the maximal growth of the number of domains that intersect the ball of radius $R/k$ around $\Omega_\ell$.
Define
the \emph{overlap number} by
\beq\label{e:overlapDef}
\tilde{\Lambda}(\{\Omega_\ell\})  := 
\max\bigl\{ \#\mathcal{N} (\ell ):\ell =1,...,N\bigr\}<\infty,\quad\text{where}\quad \mathcal{N} (\ell ):=\bigl\{ \ell^\prime  :\Omega_\ell \cap \Omega_{\ell^\prime}  \not =\emptyset \bigr\}<\infty.
\eeq
Then $\Lambda(\{\Omega_{\ell}\},k)\geq \log \tilde{\Lambda}(\{\Omega_\ell\})/\log 2$ (by taking $R=0$ in the definition); i.e., $\Lambda(\{\Omega_\ell\},k)$ controls $\tilde{\Lambda}(\{\Omega_\ell\})$.
Conversely, 
for any fixed $D$, $c>0$ if 
$\tilde{\Lambda}(\{\Omega_\ell\})<\infty$ and 
%$\{\Omega_\ell\}$ with 
$
\min_\ell\delta(\Omega_\ell,D)\geq ck^{-1},
$
then $\Lambda(\{\Omega_\ell\},k)<\infty$ (i.e., $\tilde{\Lambda}(\{\Omega_\ell\})$ controls $\Lambda(\{\Omega_\ell\},k)$ under a scale assumption).

Finally, given a basis $\{\phi_i\}_{i=1}^L$, let the real symmetric positive-definite matrix $\matrixD_{k,\{\phi\}}$ %\in \Rea^{n\times n}$ 
be such that, 
\begin{equation}\label{e:ip2}
\Big\|\sum_{i=1}^L\bV_i\phi_i\Big\|_{H^1_k(\Omega)}^2  = \big\langle \matrixD_{k,\{\phi\}} \bV,  \matrixD_{k,\{\phi\}}                    \bV\big\rangle_2,
\eeq
where $\langle\cdot,\cdot\rangle_2$ denotes the Euclidean inner product.
With these definitions, one can check that, for any $K\geq 0$,
$$
\| (I - B^{-1}_L A)^K \|_{\cZ \to \cZ}= 
\big\|\matrixD_{k,\{\phi\}}(\matrixI-\matrixB_L^{-1}\matrixA)^K\matrixD_{k,\{\phi\}}^{-1}\big\|_{\ell^2\to \ell^2}
$$
and
$$
\| (I -  AB^{-1}_R)^K \|_{\cZ^* \to \cZ^*}= 
\big\|\matrixD_{k,\{\phi\}}^{-1}(\matrixI-\matrixA\matrixB_R^{-1})^K\matrixD_{k,\{\phi\}}\big\|_{\ell^2\to \ell^2}.
$$

\subsection{The main result}

We are now in a position to state our main result. The structure of this result is, roughly speaking, 
\begin{enumerate}
    \item fix some constants and a $C^\infty$ Helmholtz problem, 
\item state that there exist two other constants $c$ and $C$,
\item list the components of the two-level hybrid Schwarz preconditioner, stating how the geometry of the triangulations and subdomains, the regularity of the cutoffs, etc.~depend on the constants in Points 1 and 2 (see \eqref{e:triangulations}, \eqref{e:geometry0}, \eqref{e:deltaLower}, and then \eqref{e:geometry} below), and
\item give a bound on the norm of powers of the difference between the preconditioned matrix  and the identity (\eqref{e:thebound} below).
\end{enumerate}
\begin{theorem}
\label{t:main}
    Let $M>0$, $0<s<\frac{1}{2}$, $\Upsilon>0$, $k_0>0$, $p\geq 1$, $c_0>0$, $\{D_{\alpha}\}_{\alpha\in \mathbb{N}^d}\subset (0,\infty)$, and $a=a(k)$ be a piecewise  $C^{\infty}$ Helmholtz problem with absorption and $C^\infty$ truncation. 
    There are $c,C>0$ such that for all $k>k_0$ with 
    \beq\label{e:polyGrowth}
    \rho:=\rho(a(k))\leq \Upsilon k^M, 
    \eeq
    all simplicial, affine conforming $C^{p+1}$ triangulations $\mathcal{T}_c$ and $\mathcal{T}_f$ of $\Omega$ with constant $\Upsilon$ satisfying $V_{\mathcal{T}_c}^p\subset V_{\mathcal{T}_f}^p$, 
    \beq\label{e:triangulations}(H_ck)^{2p}\rho+H_ck\leq c, \qquad \qquad H_c:=h(\mathcal{T}_c,\Omega),\qquad \qquad 
    \mathcal{U}(\mathcal{T}_c,k^{-1})+\mathcal{U}(\mathcal{T}_f,k^{-1})\leq \Upsilon,
    \eeq
    all coverings $\{\widetilde{W}_\ell\}_{\ell=1}^N$, $\{W_{\ell}\}_{\ell=1}^N$ and $\{\Omega_{\ell}\}_{\ell=1}^N$ satisfying~\eqref{e:subdomainThing1}, $\widetilde{W}_\ell\subset W_\ell$,
    \beq\label{e:geometry0}
    \Lambda(\{\Omega_\ell\},k)\leq \Upsilon, \qquad \partial\Omega_\ell\subset \cup_{T\in\mathcal{T}_f}\partial T,\qquad \partial\Omega\cup \Gamma_p\subset \cup_{T\in\mathcal{T}_c}\partial T,
    \eeq
    sets $\{U_\ell\}_{\ell=1}^N$ satisfying~\eqref{e:subdomainThing2}, piecewise $C^{\infty}$ subdomain problems on $\Omega_\ell$ with Lipschitz truncation for $(a(k),W_\ell,U_\ell)$ with constant $c_0$, 
    %and $c$-sized advection, 
    $\{a_\ell(k)\}_{\ell=1}^N$, and partitions of unity $\{\chi_\ell\}_{\ell=1}^N\subset \overline{C^{p,1}}(\Omega\setminus\Gamma_p)$ 
    subordinate to $\{\widetilde{W}_\ell\}_{\ell=1}^N$ and satisfying
\beq\label{e:domain2domain}
    \nu \cdot \tilde{A}\nabla \chi_\ell=0\text{ on }\Gamma_{\rm hard}\cap \partial\Omega_\ell,\qquad \nu \cdot (\tilde{A}_+\nabla (\chi_\ell)_+-\tilde{A}_-\nabla (\chi_\ell)_-)=0\text{ on }\Gamma_p\cap \Omega_\ell,
    \eeq
and\beq\label{e:deltaLower}
\Upsilon\min\Big\{\delta(a_\ell,\{D_\alpha\},p+1),\delta(\chi_\ell,\{D_\alpha\},p+1)\Big\}\geq \delta_\ell\geq c_0k^{-1}, \qquad \delta_\ell:=\delta(\Omega_\ell,D_0),
\eeq
\beq\label{e:geometry}
\big\{x\in\Omega_\ell\,:\, d(x,\partial\Omega_\ell\setminus \partial\Omega)<\Upsilon^{-1}\delta_\ell\big\}\subset U_\ell, \qquad d(W_\ell^c,\widetilde{W}_\ell)\geq \Upsilon^{-1}\delta_\ell,
\eeq
\beq \label{e:rhoellBound}\rho_\ell:=\rho(a_\ell(k))\leq \Upsilon (\delta_\ell k)^{M},
\eeq
with
\beq\label{e:subdomainThreshold}
\sup_{\ell}(h_\ell k)^{p}\rho_\ell +(h k)^s\leq c,\qquad h_\ell:=h(\mathcal{T}_f,\Omega_\ell),\qquad h:=\sup_\ell h_\ell
\eeq
all $\mathcal{I}_\ell:\overline{C^0}(\Omega_\ell)\to \subdomain$  and $\{\mu_\ell\}_{\ell=1}^N\subset (0,\infty)$ satisfying 
    \begin{equation}
\begin{gathered}\label{e:assIell}
    \|\mathcal{I}_\ell v_{\ell,h}\|_{H_k^1(\Omega_\ell)}\leq \Upsilon\|v_{\ell,h}\|_{H_k^1(\Omega_\ell)},\quad
    \|(I-\mathcal{I}_\ell)\chi_\ell v_{\ell,h}\|_{H_k^1(\Omega_\ell)}\leq \Upsilon \mu_\ell\|v_{\ell,h}\|_{H_k^1(\Omega_\ell)}\quad \tfor v_{\ell,h}\in \subdomain,\\
    \|(I-\mathcal{I}_\ell)u\|^2_{H_k^{1}(\Omega_\ell)}\leq \Upsilon(h_\ell k)^{2p}\sum_{\substack{T\in \mathcal{T}_f\\T\cap \Omega_\ell\neq \emptyset}}\|u\|^2_{H_k^{p+1}(T)}, \quad u\in H^{p+1}_k(\Omega_\ell)\cap \overline{C^0}(\Omega_\ell),
\end{gathered}
    \end{equation}
    and all $K\geq 1$, the preconditioner $\matrixB_L^{-1}(\matrixA)$ defined with $\coarse=\mathcal{V}_{\mathcal{T}_c}^p$ and $\fine=\mathcal{V}_{\mathcal{T}_f}^p$, using the basis $\{\phi\}_{i=1}^L$ of $\fine$, satisfies
    \begin{equation}
    \label{e:thebound}
\begin{aligned}
\big\|\matrixD_{k,\{\phi\}}(\matrixI-\matrixB_L^{-1}\matrixA)^K\matrixD_{k,\{\phi\}}^{-1}&\big\|_{\ell^2\to \ell^2}+\big\|\matrixD_{k,\{\phi\}}^{-1}(\matrixI-\matrixA\matrixB_R^{-1})^K\matrixD_{k,\{\phi\}}\big\|_{\ell^2\to \ell^2}\\
&\leq C^K\e^{K-1}\Big(\e+\max_\ell\Big\{ \rho_\ell (k\delta_\ell)^{-1}\rho (H_ck)^{p}+ \rho_\ell (H_ck)^{p}+\langle k \delta_\ell\rangle^{-M}\Big\}\Big),
\end{aligned}
\end{equation}
where
\begin{align}\nonumber
\e&:=\max_\ell\bigg\{ \rho_\ell (k\delta_\ell)^{-1}\rho (H_ck)^{2p}+ \rho_\ell (H_ck)^{2p}
 +\langle k\delta_\ell\rangle^{-M}(H_ck)^p\\
 &\qquad\qquad\qquad\qquad + (h /H_c)^{p}  +   \mu_\ell  + \big((h_\ell k)^p\rho_{\ell}+(h_\ell k)^{s}\big)\Big(\rho_\ell(k\delta_\ell)^{-1}+1\Big)\bigg\}.
 \label{e:epsilon}
\end{align}
$$
%\rho_\ell:=\rho(a_\ell(k)),\qquad h_\ell:=h(\mathcal{T}_f,\Omega_\ell),\qquad h:=h(\mathcal{T}_f,\Omega).
$$
\end{theorem}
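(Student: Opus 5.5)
We reduce everything to operator statements. Set $\Pi_0:=\iota_{0,h}A_0^{-1}\iota_{0,h}^*A$, the Galerkin projector onto $\coarse$. Write $T:=\sum_\ell \iota_{\ell,h}\mathcal{I}_\ell\chi_\ell\iota_\ell A_\ell^{-1}\iota_{\ell,h}^*A$ for the one-level operator. Then \eqref{e:BLinverse} gives
\[
I-B_L^{-1}A=(I-T)(I-\Pi_0).
\]
Because the matrix norms in \eqref{e:thebound} equal $\|(I-B_L^{-1}A)^K\|_{\cZ\to\cZ}$, it suffices to bound powers of $E:=(I-T)(I-\Pi_0)$ on $\fine$ in $H^1_k$. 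The right preconditioner is handled by duality: $I-AB_R^{-1}$ is the adjoint-type analogue with $\chi_\ell$ moved to the other side, and the same argument applies to $a^*$. We write
\[
E^K=\big[(I-\Pi_0)(I-T)\big]^{K-1}(I-\Pi_0)\cdot(I-T)\ \text{(suitably regrouped)},
\]
so the bound follows from two estimates. The first is a crude bound on $\|E\|$, giving the factor $C(\e+\dots)$. The second is the contraction estimate
\[
\|(I-\Pi_0)(I-T)(I-\Pi_0)\|\le C\e .
\]

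\textbf{Step 1 (local solves reproduce the solution up to errors).} For $u\in\fine$, the function $T u$ is obtained as follows. On each subdomain, solve $a_\ell(w_\ell,v)=a(u,v)$ for $v\in\subdomain$, multiply by $\chi_\ell$, and interpolate. Since $a_\ell=a$ on $W_\ell\supset\supp\chi_\ell$, the function $\chi_\ell u$ is, up to a commutator, a solution of the subdomain problem. Hence $w_\ell-u$ on $\widetilde W_\ell$ solves a homogeneous problem in $W_\ell$ with data coming from the absorbing region $U_\ell$.

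We then split $(I-T)u=\sum_\ell\mathcal{I}_\ell\chi_\ell(u-w_\ell)+\sum_\ell(I-\mathcal{I}_\ell)\chi_\ell u$. The second sum costs $\mu_\ell$, by \eqref{e:assIell}. The first sum, measured on $\fine$, is controlled by the continuous subdomain solution operator $\rho_\ell$ together with the fine-space quasi-optimality on $\Omega_\ell$. The threshold \eqref{e:subdomainThreshold} yields the $\big((h_\ell k)^p\rho_\ell+(h_\ell k)^s\big)(\rho_\ell(k\delta_\ell)^{-1}+1)$ term. The piece $u-w_\ell$ is not small in general. It is small only when $u$ is ``non-oscillatory/low frequency'' relative to the coarse scale, and it propagates outgoing information through the absorbing layer. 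Using semiclassical propagation of singularities for absorbing subdomain problems, one gets that $\chi_\ell(u-w_\ell)$ is governed by $(I-\Pi_0)u$ localized near $\Omega_\ell$, modulo $\langle k\delta_\ell\rangle^{-M}$ tails. The overlap growth $\Lambda$ sums these local pieces with polynomial weights, which is where $\langle k\delta_\ell\rangle^{-M}$ and $\rho_\ell(k\delta_\ell)^{-1}$ appear.

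\textbf{Step 2 (coarse correction in the preasymptotic regime).} This is the main obstacle, since $\Pi_0$ is not quasi-optimal; we only have $(H_ck)^{2p}\rho\ll1$. We plan to use the preasymptotic duality/elliptic-projection argument (as in \cite{GS3}): for $v=(I-\Pi_0)w$ with $w$ in the range of $(I-T)$, the key fact is that $a(v,z)=0$ for all $z\in\coarse$. We will show that $(I-T)v$ is essentially a high-frequency remainder plus $P$-applied smooth data. Concretely, $(I-T)v$ satisfies $a((I-T)v,\cdot)\approx$ a sum of commutator terms $[P,\chi_\ell](u-w_\ell)$, which lie in $H^{p+1}_k$ with $k$-uniform norms after applying the solution operator. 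We then apply $I-\Pi_0$ once more. The Schatz-type duality argument with the adjoint solution operator of norm $\rho$ gives
\[
\|(I-\Pi_0)f\|\lesssim (H_ck)^{p}\big(1+\rho (H_ck)^{p}\big)\|f\|_{H^{p+1}_k}.
\]
Combined with the $\rho_\ell(k\delta_\ell)^{-1}$ factor from Step 1, this produces $\rho_\ell(k\delta_\ell)^{-1}\rho(H_ck)^{2p}+\rho_\ell(H_ck)^{2p}$ in $\e$. The single un-squared factor $(H_ck)^p$ in the prefactor of \eqref{e:thebound} arises from the first application, before a second coarse projection is available. The fine-to-coarse mismatch $(h/H_c)^p$ enters through approximation of coarse piecewise polynomials by $\mathcal{I}_\ell\chi_\ell$ on the fine mesh.

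\textbf{Step 3.} Iterate the contraction estimate $K-1$ times and multiply by the crude one-step bound. Convert to matrices via $\matrixD_{k,\{\phi\}}$. Since $\e\ll1$ under \eqref{e:triangulations}, \eqref{e:subdomainThreshold}, and $\rho\le\Upsilon k^M$, we get $\log k$ iterations.
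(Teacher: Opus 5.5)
Your contraction estimate is placed on the wrong operator, and as stated it fails in exactly the regime the theorem is about.

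\textbf{Why $\|(I-\Pi_0)(I-T)(I-\Pi_0)\|\le C\e$ fails.} Your $\Pi_0$ is the coarse Galerkin projector $Q^0_{H_c}$. Its $\cZ$-norm is only bounded by $C(1+(H_ck)^p\rho)$ (see \eqref{e:overkill1}), and this is large when $(H_ck)^{2p}\rho\ll 1\ll (H_ck)^p\rho$. In $F:=(I-\Pi_0)(I-T)(I-\Pi_0)$ you pay this pollution twice.

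The input $(I-\Pi_0)v$ already carries low-frequency pollution: by \eqref{e:FEMLowbound}, $\|(I-Q^0_{H_c})v\|_{\cW^{1-p}}\lesssim (H_ck)^p\rho\|v\|_{\cZ}$. The local solves turn this into an $H^{p+1}_k$ function of norm $\sim\rho_\ell(k\delta_\ell)^{-1}(H_ck)^p\rho\|v\|_{\cZ}$. Your Schatz bound for the outer $I-\Pi_0$ then gives $\rho_\ell(k\delta_\ell)^{-1}\rho(H_ck)^{2p}\,(1+\rho(H_ck)^p)$, not the corresponding term in $\e$. Your Step 2 bookkeeping only closes because it treats the smooth part as having $k$-uniform norm, i.e.\ it drops the first pollution.

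The rough terms behave the same way. ``The second sum costs $\mu_\ell$'' really means $\mu_\ell\|(I-\Pi_0)v\|_{\cZ}\lesssim \mu_\ell(1+(H_ck)^p\rho)\|v\|_{\cZ}$. The outer $I-\Pi_0$ then multiplies this by $1+(H_ck)^p\rho$ again.

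Structurally, let $\Pi_{H_c}$ be the $\cZ$-orthogonal projector onto $\coarse$. Then $(I-Q^0_{H_c})(I-\Pi_{H_c})=I-Q^0_{H_c}$, so $F=(I-Q^0_{H_c})\,G$ with $G:=(I-\Pi_{H_c})E$. Thus in general one only has $\|F\|_{\cZ\to\cZ}\lesssim(1+(H_ck)^p\rho)\e$, and $\|E\|\,\|F\|^{K-1}$ then carries a factor $(1+(H_ck)^p\rho)^{K-1}$.

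\textbf{The two missing ingredients.} First, use the orthogonal projector. Since $Q^0_{H_c}$ reproduces $\coarse$, you have $E\,\Pi_{H_c}=0$, so $E^K=E\,[(I-\Pi_{H_c})E]^{K-1}$. It then suffices to prove $\|(I-\Pi_{H_c})Ev\|_{\cZ}\le C\e\|(I-\Pi_{H_c})v\|_{\cZ}$ plus a crude bound on $E$. The best-approximation error $I-\Pi_{H_c}$ gains $(H_ck)^p$ on $H^{p+1}_k$ functions with no pollution factor, and has norm one on the rough terms. This is what turns $\rho_\ell(k\delta_\ell)^{-1}\cdot(H_ck)^p\rho$ into $\rho_\ell(k\delta_\ell)^{-1}\rho(H_ck)^{2p}$.

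Second, you need a norm that records where the pollution lives. This is the $X$-norm \eqref{e:Xnorm} together with the bound \eqref{e:firstComp}. They say that the high-frequency part of $(I-Q^0_{H_c})v$ is \emph{not} polluted, while the polluted low-frequency part lies in $H^{p+1}_k$. Combine this with the split superapproximation of Lemma~\ref{l:splitSuper}: then $\mu_\ell$ multiplies only the unpolluted high-frequency part, and the low-frequency part only picks up $(h_\ell k)^p$. The same splitting is needed for the local FEM error, via local quasi-optimality modulo low frequencies. This is where $(h/H_c)^p$ in $\e$ comes from, not from approximating coarse functions on the fine mesh.

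Finally, Step 1 has the frequency roles reversed. $\chi_\ell(Q^\ell-I)$ is smoothing: it is small on high frequencies, and merely smooth (not small) on frequency-$k$ components. That is precisely why a second coarse approximation is needed.
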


To the best of our knowledge, Theorem \ref{t:main} is the first result on the convergence of two-level DD methods that does not require the coarse space to be in the asymptotic regime (i.e., the first result for which the Galerkin solution in the coarse space is not necessarily $k$-uniformly quasi-optimal). 

\subsection{Discussion of Theorem \ref{t:main}}

% We're going to discuss situations where 
% $\rho_\ell \leq C k\delta_\ell$ and $\epsilon$ is small.

% In this case 

\paragraph{Subdomain geometry.}
The definition of $\delta_\ell$ in condition \eqref{e:deltaLower} implies that the domains $\Omega_\ell$ must have diameters that are $\sim \delta_\ell$ and, in addition, that the absorbing layer have width at least $c\delta_\ell$. In particular, the minimal domain diameter is $\sim k^{-1}$ and we require generous overlap.

\paragraph{Making $\epsilon$ \eqref{e:epsilon} small.}

To understand when $\epsilon$ is small, we need information about $\rho, \mu_\ell$, and  $\rho_\ell$.

In this discussion, for simplicity we assume that $\rho \geq ck$; 
this occurs if the Helmholtz problem is not absorbing everywhere.
We also consider the standard choice of $\cI_\ell$; i.e., nodal interpolation on Lagrange points. In this case (and in our setting of mapped piecewise polynomials on curved elements) the superapproximation property in the first line of \eqref{e:assIell} holds with 
\beq\label{e:muell}
\mu_\ell=h_\ell \delta_\ell^{-1}
\eeq
by \cite[Lemma 4.6]{GS4}, and the approximation property in the second line of \eqref{e:assIell} holds by 
%,  e.g., \cite[Corollary 4.4.24]{BrSc:08} and 
\cite[\S D.2]{AGS2}. Equation \eqref{e:muell} together with the facts that $\delta_\ell\geq ck^{-1}$ (by \eqref{e:deltaLower})
and  $h_\ell k\ll 1$
(by \eqref{e:subdomainThreshold}) 
imply that the contribution of $\mu_\ell$ to $\e$ \eqref{e:epsilon} is small.

We highlight that 
the most practically-interesting choice of subdomain size is arguably $\delta_\ell \sim k^{-1}$ (since one wants to minimise the number of degrees of freedom per subdomain for parallel scalability). However, it is important that the behaviour of the hybrid Schwarz preconditioner not be very sensitive to the number of wavelengths in a subdomain; i.e., we need to check that the result of Theorem \ref{t:main} does not degenerate if $k\delta_\ell $ becomes large.

We now split the discussion into three cases: (i) $k\delta_\ell \leq c$ for $c$ small, (ii) $k\delta_\ell\geq C$ with $C$ large, and
and (iii) $c\leq k\delta_\ell \leq C$.

\medskip
\noindent \emph{Case 1:~domains that are small multiples of $k^{-1}$: $k\delta_\ell \leq c$.}
By the Poincar\'e--Friedrichs inequality, 
there is $c>0$ such that if $\operatorname{diam}(\Omega_\ell)\leq ck^{-1}$, then $\rho_\ell \leq 1$ (thus, in particular, the bound ~\eqref{e:rhoellBound} holds).

\medskip
\noindent \emph{Case 2:~non-trapping domains that are much larger than $k^{-1}$: $k\delta_\ell\geq C$.}
    We say that the subdomain problem $a_\ell$ is non-trapping if $\rho_\ell \leq C(k\delta_\ell)$.  There are many situations where one can prove that $a_\ell$ is non-trapping in this sense. The simplest (although somewhat trivial) case is when $\Omega_\ell \subset U$. In this case, it is easy to see that $\rho_\ell \leq C$ for $k\delta_\ell\gg 1$. 
        A more non-trivial situation occurs when $\Omega_\ell \cap \Gamma_p=\emptyset$. Set
    $$
    p_\ell(x,\xi):= A_{\ell}^{ij}(x)\xi_i\xi_j+b_\ell^i(x)\xi_i-n_\ell(x),
    $$
    and suppose there are $c,C,T>0$ such that $\pm \Im p_\ell \geq -C(k\delta_\ell)^{-1}$ on $|p_\ell|\leq c$, and for all solutions to 
    $$
    \dot x= \partial_{\xi}\Re p_\ell(x,\xi),\qquad \dot\xi=-\partial_x \Re p_\ell(x,\xi), \quad (x(0),\xi(0))=(x_0,\xi_0),\quad \text{with} \quad x_0\in\Omega_\ell,\, |p_\ell(x_0,\xi_0)|\leq c
    $$
    (extended by reflection at $\partial\Omega_-$, or, more precisely, as generalised broken bicharacteristics; see e.g~\cite[Appendix C]{AGS2},~\cite[\S 24.3]{Ho:85}) there is $0\leq t\leq T$ such that $x(\pm t)\in U_\ell$. Then, for $k\delta_\ell \gg1 $,  $\rho_\ell \leq Ck\delta_\ell$ (see e.g.~\cite{Bu:02,GaSpWu:18}).  This is, in fact, the origin of the name non-trapping since every trajectory escapes into the absorbing layer. This example can be extended to subdomains intersecting $\Gamma_p$ if one replaces generalised broken bicharacteristics by an analogue that may split into a reflected and transmitted part at $\Gamma_p$ (see e.g.~\cite{JaSaSt:15, SaVa:97} for the relevant dynamics). 

We point out that, when $\delta_\ell \ll 1$, the coefficients of the Helmholtz problem are effectively constant on each subdomain and all scatterer boundaries $\partial\Omega_-$ and interfaces $\Gamma_p$ are effectively hyperplanes. In this case, if the only change made to the (global) coefficients to create the subdomain problems is to add absorption, then $a_\ell$ will be non-trapping in the dynamical sense described above.
    
\medskip
\noindent\emph{Case 3:~middle-sized domains: $c\leq k\delta_\ell\leq C$.}
There are $c,C>0$ depending on $a$, $\Upsilon$, and $\{D_\alpha\}$, such that, except for $c\leq k\delta_\ell\leq C$, either the analysis in case 1 or the analysis in case 2 applies. To handle this intermediate regime, observe that the Fredholm alternative shows that for a fixed subdomain problem, $\rho_\ell$ may only fail to be uniformly bounded in $k$ due to the presence of finitely many eigenvalues. 
If, for example, the absorption on the subdomains is a CAP (and $k$ is large enough if there is a PML on the global problem), then there are no such eigenvalues and $\rho_\ell \leq c$. Indeed, in this case, the imaginary part of the PDE on each subdomain has a sign, and uniqueness follows from the unique continuation principle.

\medskip

In all of the above cases $\rho_\ell \leq Ck\delta_\ell$; hence, recalling that $\rho \geq ck$ and $\delta_\ell\leq c$,
we see that $\e$ \eqref{e:epsilon} is controlled by
    $$
\e\lesssim \rho (H_ck)^{2p}
 +
 (H_ck)^p + (h /H_c)^{p}+(h k)^s  +\max_\ell (h_\ell k)^pk\delta_\ell. 
$$
Therefore, to make $\e\ll 1$, we require $\rho(H_ck)^{2p}\ll 1$, $h\ll H_c$, and $(h_\ell k)^p(k\delta_\ell)\ll 1$. When $k\delta_\ell \sim 1$, this last requirement is automatic and this is the result summarised in Informal Theorem \ref{t:informal}.

% For the most typical choices of subdomains, one can, in fact, check that  $\delta_\ell \sim k^{-1}$ implies $\rho_\ell\leq C$
The condition $\rho(H_ck)^{2p}\ll 1$ is 
observed empirically to be necessary for the Galerkin solution in the coarse space to have controlled relative error and, in fact, the matrix $\matrixA_0$ may fail to be invertible above this threshold
 \cite{BeSaToVe:23}. In Section~\ref{s:numerics}, we give numerical evidence in the setup of Theorem~\ref{t:main} that indicates that the threshold $\rho(H_ck)^{2p}\ll 1$ is required to have the number of iterations -- of both the fixed-point method and GMRES -- bounded uniformly in $k$.

    \paragraph{Consequences of having $\e$ small for iteration counts.}
    In all situations where $\e\ll 1$, one can achieve a residual of size $\gamma>0$ after 
    \beq\label{e:logs}
    \sim \frac{\log \gamma-\log (1+(H_ck)^p\rho)}{\log \e}
    \eeq
    iterations, for both the fixed-point method and GMRES in the $\matrixD_{k, \{\phi\}}$ weighted inner product \eqref{e:ip2}.
Recall that unweighted GMRES takes an extra $C\log (hk)^{-1}$ iterations than weighted GMRES \cite[Corollary 5.8]{GoGrSp:20}. That is, the fixed-point method, weighted GMRES, and unweighted GMRES all convergence 
in no more than $\sim \log k$ iterations provided that $(H_ck)^{2p}\rho\ll 1$ and 
$h$ is not superalgebraically small in $k$.
From \eqref{e:logs}, we also see that 
Theorem \ref{t:main} recovers the result from \cite{GS4} that if 
$(H_ck)^p\rho\leq C$ then convergence of the fixed-point method/weighted GMRES occurs in a $k$-independent number of iterations.
    % This shows that with coarse spaces in the preasymptotic regime one still has convergence in no more than $\sim \log k$ iterations and recovers the fact, proved in~\cite{GS4}, that if 

\paragraph{The assumptions on the underlying Helmholtz problem.}

To simplify the analysis of the coarse problem, Theorem \ref{t:main} assumes that the global truncation boundary is smooth and that absorption of the form~\eqref{e:absorbing} holds. These assumptions, in particular, rule out truncation of the global Helmholtz problem with Cartesian PMLs. To cover Cartesian PMLs, one would need to repeat the preasymptotic $h$-FEM error analysis in \cite{GS3} but explicitly take into account the better behaviour of the Helmholtz solution operator in the PML region (using, e.g., ideas from \cite{GGGLS2}).

In addition, Theorem \ref{t:main} assumes that all coefficients and scatterer boundaries are smooth so that various localisation procedures produce errors that are super-algebraically small in $k$ (see the proof of Lemma \ref{l:coarse}). This assumption could be relaxed to $C^{p}$-type regularity assumptions, but we would then need to strengthen the polynomial growth assumption on $\rho(k)$ \eqref{e:polyGrowth}; specifically, for each $p$ one would allow a fixed $p$-dependent amount of polynomial growth in $\rho(k)$.

\paragraph{The assumptions on the cutoffs.}
Two features of the cutoffs $\chi_\ell$ are important for the proof of Theorem \ref{t:main}. First, they need to have sufficiently high ($C^{p,1}$) regularity (encoded in Theorem \ref{t:main} via \eqref{e:deltaLower}) so that they are bounded multipliers on $H_{k}^{p+1}$ and hence approximation and super-approximation estimates hold in piecewise polynomial spaces of degree $p$. Second
for the sum of local solves to plausibly be close to the true Helmholtz solution (which is in the domain of the operator $P$), it is crucial that $\chi_\ell$ map the domain of $P$ to itself. This is guaranteed by the conditions~\eqref{e:domain2domain}.

% $\rho_\ell$

\subsection{Discussion of the ideas of the proof}

To understand the proof of Theorem~\ref{t:main}, observe that the error in the preconditioned matrix is
\begin{align}
I-B_L^{-1}A&=\Big( I-\sum_{\ell=1}^N\iota_{\ell,h}\cI_\ell \chi_\ell \iota_\ell A_\ell^{-1}\iota_{\ell,h}^* A\Big)(I-\iota_{0,h}A_0^{-1}\iota_{0,h}^* A)\label{e:errorDecompose}\\
&\approx\Big( \sum_{\ell=1}^N\chi_\ell (I -\iota_{\ell,h} A_\ell^{-1}\iota_{\ell,h}^* A )\Big)(I-\iota_{0,h}A_0^{-1}\iota_{0,h}^* A)\nonumber,
\end{align}
where the error in  passing to the second line is controlled using super-approximation results in $\subdomain$. 
As advocated in~\cite[\S1.2.3]{GS4}, one should think of the 
error in the coarse solve, $(I-\iota_{0,h}A_0^{-1}\iota_{0,h}^* A) $, as removing the low frequencies and the error in the subdomain solve $\sum_\ell(\chi_\ell(I-\iota_{\ell,h}A_\ell^{-1}\iota_{\ell,h}^* A)$ as removing the high-frequencies
(since high frequencies do not propagate under the action of the Helmholtz solution operator, and thus can be well-approximated by a sum of local solves), so that their combination contracts the entire norm. 
The analysis in \cite{GS4} rigorously implemented this idea in the case when the Galerkin solution in the coarse space is $k$-uniformly quasi-optimal. In contrast, here we consider piecewise-polynomial subspaces in the preasymptotic regime, i.e., for which the Galerkin solution in the coarse space is not $k$-uniformly quasi-optimal; see \eqref{e:overkill1} below.

For the purposes of this discussion, we denote by $\Psi:H_k^1\to H_k^{p+1}$ an operator that returns the ``frequency $k$''-part of a function $u$-- this operator is constructed using the functional calculus of an associated self-adjoint operator, and is the key component of preasymptotic error analyses~\cite{GS3} (see Section~\ref{s:preliminaries} for the full definition of $\Psi$). 

\bigskip

\noindent{\bf Coarse solve as frequency $\lesssim k$ removal.}

Denote by $Q^0_{H_c}:\fine\to \coarse$ the solution to 
\begin{equation} 
\label{e:galerkinCoarse}
a(Q^0_{H_c}v_h,v_{H_c})=a(v_h,v_{H_c})\qquad \tfa v_{H_c}\in\coarse;
\end{equation}
i.e. 
\beq\label{e:overkill2}
Q^0_{H_c}=  A_0^{-1}\iota_{0,h}^*A.
\eeq
Then, by~\cite{GS3} there is $c>0$ such that if $(H_ck)^{2p}\rho\leq c$, then $Q^0_{H_c}$ is well-defined; more precisely $A_0$ is invertible 
and 
\beq\label{e:overkill1}
\|(I-Q^0_{H_c})u\|_{H_k^1}\leq C (1 + (\Hc k)^p \rho)\|(I-\Pi_{H_c})u\|_{H_k^1}.
\eeq
%Moreover \cite{GS3} proves bounds on $(I-Q^0_{H_c})$.
Furthermore, we show in Lemma~\ref{l:coarse} (see also~\cite[Theorem 6.27]{GSAN}) that, with $\Pi_{H_c}:H_k^1\to \coarse$ the $H_k^1$ orthogonal projector,
\begin{equation} 
\label{e:coarseEstimateIntro}
\begin{aligned}
\|(I-\Psi)(I-Q^0_{H_c})u\|_{H_k^1}&\leq C(1+(H_ck)^{2p}\rho)\|(I-\Pi_{H_c})u\|_{H_k^1},\\
\|\Psi(I-Q^0_{H_c})u\|_{H_k^{p+1}}&\leq C(H_ck)^{p}\rho \|(I-\Pi_{H_c})u\|_{H_k^1},\\
\|(I-Q^0_{H_c})u\|_{H_k^{1-p}}&\leq C(H_ck)^{p}\rho \|(I-\Pi_{H_c})u\|_{H_k^1}.
\end{aligned}
\end{equation}
Since 
\beq\label{e:approx}
\|(I-\Pi_{H_c})\Psi u\|_{H_k^1}\leq (H_ck)^{p}\|\Psi u\|_{H_k^1},
\eeq
the bounds \eqref{e:coarseEstimateIntro} imply that, when $(H_ck)^{2p}\rho \ll 1$, the coarse solve effectively removes the low frequencies of $u$; to see this, observe that if $u=\Psi u$, then the full norm contracts under the coarse solve. However, the high frequencies are not well approximated and so, when $1\ll(H_ck)^p\rho$ and $(H_ck)^{2p}\rho \ll 1$, they produce a possibly large component in $(I-Q^0_{H_c})$. 

To quantitatively capture the difference between the coarse solve acting on high and low frequencies, we use 
the norm 
%More quantitatively, if we introduce the following norm on $H_k^1$ where we weight the high frequencies of $u$: 
\begin{equation}
    \label{e:xPrelim}
    \|u\|_{X}^2:=\|\Psi u\|_{H_k^{p+1}}^2+(H_ck)^{-2p}\|(I-\Psi)u\|_{H_k^1}^2,
\end{equation}
i.e., where high-frequencies are weighted by $(\Hc k)^{-2p}$. 
% Observe that 
% \beq\label{e:normEquiv1}
% \| u\|_X \leq C (\Hc k)^{-p} \|u\|_{H^1_k}.
% \eeq
Then
\beq\label{e:overkill3}
\|(I-Q^0_{H_c})u\|_{X}\leq C\big(1+(H_ck)^{2p}\rho\big)(H_ck)^{-p}\|(I-\Pi_{H_c})u\|_{H_k^1}
\eeq
and
$$
\|(I-\Pi_{H_c})u\|_{H_k^1}\leq C(H_ck)^p\|u\|_X,
$$
so that 
$$
\|(I-Q^0_{H_c})u\|_{X}\leq C\big(1+(H_ck)^{2p}\rho\big)\|u\|_X;
$$
compare to \eqref{e:overkill1}.
That is, the coarse solve has bounded relative error in the norm $\|\cdot\|_X$ when $(\Hc k)^{2p}\rho \leq c$.

\bigskip
\noindent {\bf Orthogonal decomposition on $\coarse$.}

A key step in the proof is the simple observation that, 
by \eqref{e:overkill1}, $(I-Q^0_{H_c})\Pi_{H_c}=0$; i.e., 
the coarse solve is  the identity in the coarse space. 
By~\eqref{e:errorDecompose} and \eqref{e:overkill2}, $I-B_L^{-1}A = E(I-Q^0_{H_c})$ for some operator $E:H_k^1\to H_k^1$. Therefore
$I-B_L^{-1}A=(I-B_L^{-1}A)(I-\Pi_{\Hc})$, so that 
$$
(I-B_L^{-1}A)^K=(I-B_L^{-1}A)\big((I-\Pi_{H_c})(I-B_L^{-1}A)\big)^{K-1},
$$
and thus    it is enough to show that $(I-\Pi_{H_c})(I-B_{L}^{-1}A):H_k^1\to H_k^1$ is small and $(I-B_{L}^{-1}A):H_k^1\to H_k^1$ is controlled.

By \eqref{e:coarseEstimateIntro} and \eqref{e:approx},
\begin{align}
\|(I-\Pi_{H_c})(I-Q^0_{H_c})\Psi u\|_{H_k^1}
&\leq \|(I-\Pi_{H_c})\Psi(I-Q^0_{H_c})\Psi u\|_{H_k^1}
+
\|(I-\Pi_{H_c})(I-\Psi)(I-Q^0_{H_c})\Psi u\|_{H_k^1}\nonumber
\\
&\leq C \big(1+(H_ck)^{2p}\rho\big)\|(I-\Pi_{H_c})\Psi u\|_{H_k^1}\nonumber\\
&\leq C (H_ck)^p\big(1+(H_ck)^{2p}\rho\big)\| u\|_{H_k^1}
\label{e:explain1}
\end{align}
%\end{equation}
and 
\beq\label{e:explain2}
\|(I-\Pi_{H_c})\Psi (I-Q^0_{H_c})(I-\Psi) u\|_{H_k^1}\leq C(H_ck)^{2p}\rho\|u\|_{H_k^1}.
\eeq
The two bounds \eqref{e:explain1} and \eqref{e:explain2} show that using the coarse solve as a preconditioner in $\fine$ results in an error that is acceptable (i.e., small after approximation in $\coarse$) except at frequencies $\gg k$. Because these frequencies do not propagate under the Helmholtz solution operator, it is then reasonable to expect that the addition of a subdomain solve as in \eqref{e:errorDecompose} can correct these errors.
The analysis in the next step shows that this is indeed the case.

\bigskip

\noindent {\bf Analysis of the subdomain problems.}

To finish the sketch proof of Theorem~\ref{t:main} it is therefore sufficient to prove that  
\begin{align}
\Big\|(I-\Pi_{H_c})\sum_\ell \chi_\ell (I-\iota_{\ell,h}A_\ell^{-1}\iota_{\ell,h}^* A) (I-Q^0_{H_c})u\Big\|_{H_k^1}\ll (H_ck)^p\|(I-Q^0_{H_c})u\|_{X}
\label{e:subdomainIntro}
\end{align}
since, by~\eqref{e:overkill3},
\begin{align*}
   & (H_ck)^p\|(I-Q^0_{H_c})u\|_{X}\\
&\qquad \leq C(H_ck)^p (1+(H_ck)^{2p}\rho )(H_ck)^{-p}\|(I-\Pi_{H_c})u\|_{H_k^1}\leq C(1+(H_ck)^{2p}\rho)\|u\|_{H_k^1}.
\end{align*}
The proof of \eqref{e:subdomainIntro} is somewhat more subtle than the analysis of the coarse space, and is divided into two parts. We denote by $Q^\ell_h:\fine\to \subdomain$ the solution of 
\beq\label{e:QellIntro}
a_\ell(Q^\ell_hu_h,v_{\ell,h})=a(u_h,E_{\ell}v_{\ell,h})\qquad \tfa v_{\ell,h}\in\subdomain,
\eeq
where $E_{\ell}:H^1_{\partial\Omega}(\Omega_\ell)\to \cZ$ denotes extension by $0$; 
that is, $Q^\ell_h= A_\ell^{-1}\iota_{\ell,h}^*A$, so that \eqref{e:subdomainIntro} involves $(I-Q^\ell_h)(I- \Qcoarse)$.

We then write 
$$
(I-Q^\ell_h)=(I-Q^\ell)+(Q^\ell-Q^\ell_h), 
$$
where
\beq\label{e:Qell}
Q^\ell:= P_\ell^{-1}E_{\ell}^*P.
\eeq
That is, $Q^\ell$ is the continuous solution for which $Q^{\ell}_h$ is the Galerkin solution.

We start by showing that $\chi_\ell(I-Q^\ell)(I-Q^0_{H_c})$ is in $H^{p+1}_k$, and hence well approximated in $\coarse$. This relies on the following features of the setup
\begin{enumerate}
    \item The quadratic forms $a_\ell$ are equal to $a$ on $\supp \chi_\ell$ and hence $\chi_\ell (I-P_\ell^{-1}E_\ell^*P)$ is smoothing. More precisely, away from $\partial\Omega_\ell\setminus\partial\Omega$, this allows us to write $\chi_\ell (I-Q^\ell) =\chi_\ell P_{\ell}^{-1}[\chiellg,P]$ for some $\chiellg$ with $\supp (1-\chiellg)\cap \supp \chiell=\emptyset$. 
    \item There is absorption in a neighborhood of the boundaries $\partial\Omega_\ell\setminus \partial\Omega$.  
    \item $P(I-Q^0_{H_c})$ is orthogonal to $\coarse$ and hence, for terms near $\partial\Omega_\ell\setminus\partial\Omega$, the forcing term is small. 
\end{enumerate}

It then remains to understand $\chi_\ell (Q^\ell-Q_h^\ell)$. For this, we use the local-quasi-optimality results of~\cite{AGS1} together with our analysis of $Q^\ell$ to see that it is enough to estimate $Q^\ell-Q_h^\ell$ in some very low regularity Sobolev space. This is then done using the preasymptotic Galerkin estimates from~\cite{GS3} applied to the subdomain problem.

\section{Numerical experiments}
\label{s:numerics}

In this section, we present numerical experiments for the Helmholtz equation \emph{either} in free space \emph{or} in the exterior of a trapping obstacle. We consider polynomials of degree 2 and 4, choose $\delta_\ell\sim k^{-1}$ (for which Theorem \ref{t:main} gives the fastest convergence of the two-level method), and present results demonstrating that \begin{enumerate}[label=(\Alph*),ref=\Alph*]
    \item \label{i:A} under the assumptions of Theorem~\ref{t:main}, one obtains exponential convergence of both GMRES and the iterative solver uniformly in $k$ (i.e., consistent with Theorem \ref{t:main}), and 
    \item \label{i:B}
    if $(H_c k)^{2p}\rho(k)\gg 1$, then the iterative method diverges and the number of iterations required for GMRES to converge grows quickly with $k$. This failure of convergence happens regardless of how small one chooses $h$. 
\end{enumerate}
More precisely, under Point 2 we present numerical experiments 
with 
\begin{enumerate}[label=(B.\arabic*),ref=B.\arabic*]
\item \label{i:B1} 
$(\frac{hk}{C_f})^{2p}\rho(k) =1$ and $1\ll(\frac{\Hc k}{C_c})^{2p} \rho(k) \ll \rho(k)$,
\item \label{i:B2} $hk=C_f$ and $\Hc k=C_c$, 
and
\item  \label{i:B3}  
$(\frac{hk}{C_f})^{2p}\rho(k) =1$ and $\Hc k=C_c$.
\end{enumerate}
In all these three cases the iterative method diverges and the number of iterations required
for GMRES to converge grows quickly with $k$.

\subsection{Numerical setup}

All the numerical experiments were implemented in FreeFEM, with the code available at \url{https://github.com/FreeFem/FreeFem-sources/tree/develop/examples/ffddm/Helmholtz-2d-2lvl-nested.edp}.

\paragraph{Truncation and the sesquilinear form.}

We truncate to the ball of radius $R_{\tr}=0.9$, i.e., $\Omega_{\tr}:= B(0,R_{\tr})$ with an absorbing layer starting at $R_{\RPMLo}=0.7$. Since FreeFEM does not implement curved meshes, we commit a variational crime by approximating $\partial \Omega_{\tr}$ using linear piecewise polynomials (see \cite{ChSp:25} for an $k$-explicit analysis of the FEM error in this situation).

The Sommerfeld radiation condition is approximated using a radial PML, leading to the sesquilinear from $a:H_0^1(\Omega_{\tr})\times H_0^1(\Omega_{\tr})\to \mathbb{C}$ given by \eqref{e:quadratic} 
with $\tilde{A}, \tilde{b},$ and $\tilde{n}$ defined as follows. 

We use the PML scaling function 
$$
\PMLs=\begin{cases}10\exp\Big(-\frac{1}{3(r-R_{\RPMLo})}\Big)&r>R_{\RPMLo}\\ 0&r\leq R_{\RPMLo}\end{cases}.
$$
Then, defining
\beqs
\alpha(r) := 1 + \ri \PMLs'(r) \quad \tand\quad \beta(r) := 1 + \ri \PMLs(r)/r.
\eeqs
and the matrices $D$ and $H$ using polar coordinates $(r,\varphi)$ by
\beqs
D :=
\left(
\begin{array}{cc}
\beta(r)\alpha(r)^{-1} &0 \\
0 & \alpha(r) \beta(r)^{-1}
\end{array}
\right) 
\quad\tand\quad
H :=
\left(
\begin{array}{cc}
\cos \varphi & - \sin\varphi \\
\sin \varphi & \cos\varphi
\end{array}
\right),
\eeqs
 we set
$\tilde{b} =0$,
\beq\label{e:firstPML}
\tilde{A} := 
\begin{cases}
A
\hspace{-1ex}
& \tin \Omega,\\
HDH^T 
\hspace{-1ex}
&\tin (B_{\RPMLo})^c
\end{cases}
\quad\tand\quad
\tilde{n} := 
\begin{cases}
 n &\tin \Omega \cap B_{\RPMLo},\\
\alpha(r) \beta(r)^{d-1} 
\hspace{-1ex}
&\tin (B_{\RPMLo})^c.
\end{cases}
\eeq

We use the source 
\begin{equation} \label{e:rhsExperiments}
f=\chi(|x|)J_0(k|x|),\quad\text{ with} \quad\chi=\begin{cases} \frac{1}{1+\exp\left(\frac{1}{0.1-x}-\frac{1}{x-0.05}\right)}&0.05<x<0.1\\
1&x<0.05\\
0&x>0.1
\end{cases}.
\end{equation}

\paragraph{Subdomain problems.}
The subdomains $\Omega_\ell$ are obtained using the METIS algorithm. They are chosen to be of size $\sim k^{-1}$ and have generous overlap. We deviate slightly from the setup of Theorem~\ref{t:main} by using piecewise linear partition-of-unity functions $\chiell$ and an absorbing profile that is not smooth. The subdomain problems are then given by $a_\ell: \cZ_\ell\times \cZ_\ell \to \mathbb{C}$,
$$
a_{\ell}(u,v)= a(u,v)-i \langle q_\ell  u,v\rangle, 
$$
where 
$$
0\leq q_{\ell}=\begin{cases}10\exp\Big(-\frac{L_\ell}{0.6(d(x,\partial\Omega_{\ell}\setminus \partial\Omega)-L_\ell)}\Big)&d(x,\partial\Omega_{\ell}\setminus \partial\Omega)<L_\ell\\
0&\text{else},
\end{cases}
$$
and $L_\ell\sim k^{-1}$ is half of the subdomain overlap width.
% \in \overline{C^{0,1}}(\Omega_\ell )$, $\supp q_{\ell}\cap \supp \chiell=\emptyset$, and $q_\ell>c>0$ in a neighborhood of width $\sim k^{-1}$ around $\partial\Omega_{\ell}\setminus \partial\Omega $.

\paragraph{Piecewise polynomial spaces}

The mesh is generated using a standard Delaunay algorithm as implemented in FreeFEM and the basis functions $\{\phi\}_{i=1}^K$ from~\eqref{e:ip2} are the standard representation of piecewise polynomials using Lagrange nodes expressed in barycentric coordinates.

\subsection{Experiments in free space.}

In this case $\rho\sim Ck$; %\cite{GaSpWu:20} 
in the experiments below we take $\rho=k$.
%(In fact,~\cite{GaSpWu:20} one can show that $C=2L/\pi$, where $L$ is the length of the longest line in the region without the PML.) 

% $p=2$
% \begin{itemize}
%     \item $C_f=4$ and $C_c=8$ (1 above)
%     \item $C_f=4$ and $C_c=8$  (2.1)
%     \item $C_f=1$ and $C_c=2$ (2.2) fixed ppw
%     \item $C_f=4$ and $C_c=2$ (2.3) fixed ppw in coarse
% \end{itemize}

% $p=4$

% \begin{itemize}
%     \item $C_f=5.5$ and $C_c=11$ (1 above)
%     \item $C_f=5.5$ and $C_c=11$  (2.1)
%     \item $C_f=2.5$ and $C_c=5$ (2.2)
%     \item $C_f=6$ and $C_c=5$ (2.3)
% \end{itemize}

% \paragraph{Experiments in the exterior of a trapping domain}

% In this section, we present experiments in the domain 
% $$
% \Omega=\Omega_{\tr}\setminus \Omega_-,\qquad \Omega_-:=\cup_{x_0\in\{\pm 1\},y_0\in\{\pm 1\}} .3(x_0,y_0)+\begin{pmatrix} 1&1\\-1&1\end{pmatrix} \tfrac{1}{\sqrt{2}}(-.15,.15)^2
% $$
% with $P$ as in~\eqref{e:experiments} and $f$ as in~\eqref{e:rhsExperiments}

% In this case, one can show that for $k=\frac{2\pi}{.6\sqrt{2}-.3}\sqrt{n^2+m^2}$ with $m=0,n\in\mathbb{Z}$ of $n/m\in \{0,\pm 1,\pm 2\}$ we have $\rho(k)\gtrsim k^2$. 

% Source is between midpoints of lower two squares

%\FloatBarrier
\paragraph{P2 elements.} 
%\ 
%\FloatBarrier 
Figures \ref{f:P2A}, \ref{f:P2B1}, \ref{f:P2B2}, and \ref{f:P2B3} show numerical results for $p=2$ in the scenarios in Points \ref{i:A}, \ref{i:B1}, \ref{i:B2}, and \ref{i:B3}, respectively. 
In Figure \ref{f:P2A} we see that both the fixed-point iteration and GMRES converge in a $k$-uniform number of iterations (in accordance with Theorem \ref{t:main}), but in the other three experiments we see the number of iterations for both methods increasing as $k$ increases.

\begin{figure}[h!]
\generatepic{00}{}{2lvl_radialmmg_hom_c4_C8_keps0_k100p9_P2}{Free Space, P2 Elements, $\rho = k$, $(\frac{hk}{4})^{2p} \rho = 1$, $(\frac{H_ck}{8})^{2p} \rho = 1$}
\caption{Experiments for free space in the set up of Point \ref{i:A} above (i.e., covered by Theorem \ref{t:main}).}
\label{f:P2A}
\end{figure}

\begin{figure}
\generatepic{00}{}{2lvl_radialmmg_hom_c4_C8_keps0.1_k100p9_P2}{Free Space, P2 Elements, $\rho = k$, $(\frac{hk}{4})^{2p} \rho = 1$, $(\frac{H_ck}{8})^{2p} \rho = k^{0.1}$}
\caption{
Experiments for free space 
with $p=2$ in the set up of Point \ref{i:B1} above (i.e., a scenario not covered by Theorem \ref{t:main}).
}
\label{f:P2B1}
\end{figure}

\begin{figure}
\generatepic{00}{}{2lvl_radialmmg_hom_ccst1_Ccst2_k100p9_P2}{Free Space, P2 Elements, $\rho = k$, $hk = 1, H_ck=2$}
\caption{
Experiments for free space 
with $p=2$ in the set up of Point \ref{i:B2} above (i.e., a scenario not covered by Theorem \ref{t:main}).
}
\label{f:P2B2}
\end{figure}

%\enlargethispage{2\baselineskip}
\begin{figure}
\generatepic{00}{}{2lvl_radialmmg_hom_c4_Ccst2_k100p9_P2}{Free Space, P2 Elements, $\rho = k$, $(\frac{hk}{4})^{2p} \rho = 1$, $H_ck=2$}
\caption{Experiments for free space with $p=2$ in the set up of Point \ref{i:B3} above (i.e., a scenario not covered by Theorem \ref{t:main}).
}
\label{f:P2B3}
\end{figure}

% \begin{figure}[h!]
% \generatepic{00}{}{2lvl_final_hom_c4_C8_keps0.1_k100p9_P4}{P4, $\rho = k$, $(\frac{hk}{4})^{2p} \rho = 1$, $(\frac{H_ck}{8})^{2p} \rho = k^{0.1}$}
% \caption{
% Experiments for free space with $p=4$ in the set up of Point \ref{i:B1} above (i.e., a scenario not covered by Theorem \ref{t:main}).
% }
% \end{figure}
% \begin{figure}[h!]
% \generatepic{00}{}{2lvl_final_hom_c5_C10_keps0_k100p9_P4}{P4, $\rho = k$, $(\frac{hk}{5})^{2p} \rho = 1$, $(\frac{H_ck}{10})^{2p} \rho = 1$}
% \caption{
% Experiments for free space with $p=4$ in the set up of Point \ref{i:A} above (i.e., a scenario covered by Theorem \ref{t:main}).
% }
% \label{f:P4A}
% \end{figure}

% \begin{figure}[h!]
% \generatepic{00}{}{2lvl_final_hom_c5_C10_keps0.1_k100p9_P4}{P4, $\rho = k$, $(\frac{hk}{5})^{2p} \rho = 1$, $(\frac{H_ck}{10})^{2p} \rho = k^{0.1}$}
% \caption{
% Experiments for free space with $p=4$ in the set up of Point \ref{i:B1} above (i.e., a scenario not covered by Theorem \ref{t:main}).
% }
% \label{f:P4B1}
% \end{figure}

%\FloatBarrier

\paragraph{P4 elements.} Figures \ref{f:P4A}, \ref{f:P4B1}, \ref{f:P4B2}, and \ref{f:P4B3}  
show numerical results for $p=4$ in the scenarios in Points \ref{i:A}, \ref{i:B1}, \ref{i:B2}, and \ref{i:B3} respectively. 
In Figure \ref{f:P2A} we see that both the fixed-point iteration and GMRES converge in a $k$-uniform number of iterations (in accordance with Theorem \ref{t:main}), but in the other two experiments we see the number of iterations for both iterations increasing as $k$ increases.

%\newpage

\begin{figure}[h!]
\generatepic{00}{}{2lvl_radialfix_hom_c5_C10_keps0_k100p9_P4}{Free Space, P4 Elements, $\rho = k$, $(\frac{hk}{5})^{2p} \rho = 1$, $(\frac{H_ck}{10})^{2p} \rho = 1$}
\caption{
Experiments for free space with $p=4$ in the set up of Point \ref{i:A} above (i.e., a scenario covered by Theorem \ref{t:main}).
}
\label{f:P4A}
\end{figure}

\begin{figure}[h!]
\generatepic{00}{}{2lvl_radialfix_hom_c5_C10_keps0.1_k100p9_P4}{Free Space, P4 Elements, $\rho = k$, $(\frac{hk}{5})^{2p} \rho = 1$, $(\frac{H_ck}{10})^{2p} \rho = k^{0.1}$}
\caption{
Experiments for free space with $p=4$ in the set up of Point \ref{i:B1} above (i.e., a scenario not covered by Theorem \ref{t:main}).
}
\label{f:P4B1}
\end{figure}

% \begin{figure}[h!]

% \generatepic{00}{}{2lvl_final_hom_c6_C12_keps0_k100p9_P4}{P4, $\rho = k$, $(\frac{hk}{6})^{2p} \rho = 1$, $(\frac{H_ck}{12})^{2p} \rho = 1$}
% \caption{
% Experiments for free space with $p=4$ in the set up of Point \ref{i:A} above (i.e., a scenario covered by Theorem \ref{t:main}).
% }
% \end{figure}

% \begin{figure}[h!]
% \generatepic{00}{}{2lvl_final_hom_c6_C12_keps0.1_k100p9_P4}{P4, $\rho = k$, $(\frac{hk}{6})^{2p} \rho = 1$, $(\frac{H_ck}{12})^{2p} \rho = k^{0.1}$}
% \caption{
% Experiments for free space with $p=4$ in the set up of Point \ref{i:B1} above (i.e., a scenario not covered by Theorem \ref{t:main}).
% }
% \end{figure}

% \begin{figure}[h!]
% \generatepic{00}{}{2lvl_final_hom_ccst2_Ccst4_k100p9_P4}{P4, $hk = 2, H_ck = 4$}
% \caption{
% Experiments for free space with $p=2$ in the set up of Point \ref{i:B2} above (i.e., a scenario not covered by Theorem \ref{t:main}).
% }
% \end{figure}

\begin{figure}[h!!]
\generatepic{00}{}{2lvl_radialfix_hom_ccst2.5_Ccst5_k100p9_P4}{Free Space, P4 Elements, $\rho = k$, $hk = 2.5, H_ck = 5$}
\caption{
Experiments for free space with $p=4$ in the set up of Point \ref{i:B2} above (i.e., a scenario not covered by Theorem \ref{t:main}).
}
\label{f:P4B2}
\end{figure}

\begin{figure}[h!!]
\generatepic{00}{}{2lvl_radialfix_hom_c5_Ccst5_k100p9_P4}{Free Space, P4 Elements, $\rho = k$, $(\frac{hk}{5})^{2p} \rho = 1$, $H_ck = 5$}
\caption{
Experiments for free space with $p=4$ in the set up of Point \ref{i:B3} above (i.e., a scenario not covered by Theorem \ref{t:main}).
}
\label{f:P4B3}
\end{figure}

\subsection{Experiments in the exterior of a trapping obstacle}

Here we present experiments in the following domain, pictured in Figure~\ref{f:TrappingDomain},
\begin{equation}
\Omega=\Omega_{\tr}\setminus \Omega_-,\qquad \Omega_-:=\cup_{x_0\in\{\pm 1\},y_0\in\{\pm 1\}} 0.3(x_0,y_0)+\begin{pmatrix} 1&1\\-1&1\end{pmatrix} \tfrac{1}{\sqrt{2}}(-0.15,0.15)^2.
\label{e:trapping}
\end{equation}

\begin{figure}
    \centering
\begin{tikzpicture}
\clip (-3.5,-3.5) rectangle (3.5,3.5);
\draw node at (0,0){\includegraphics[width=0.75\linewidth]{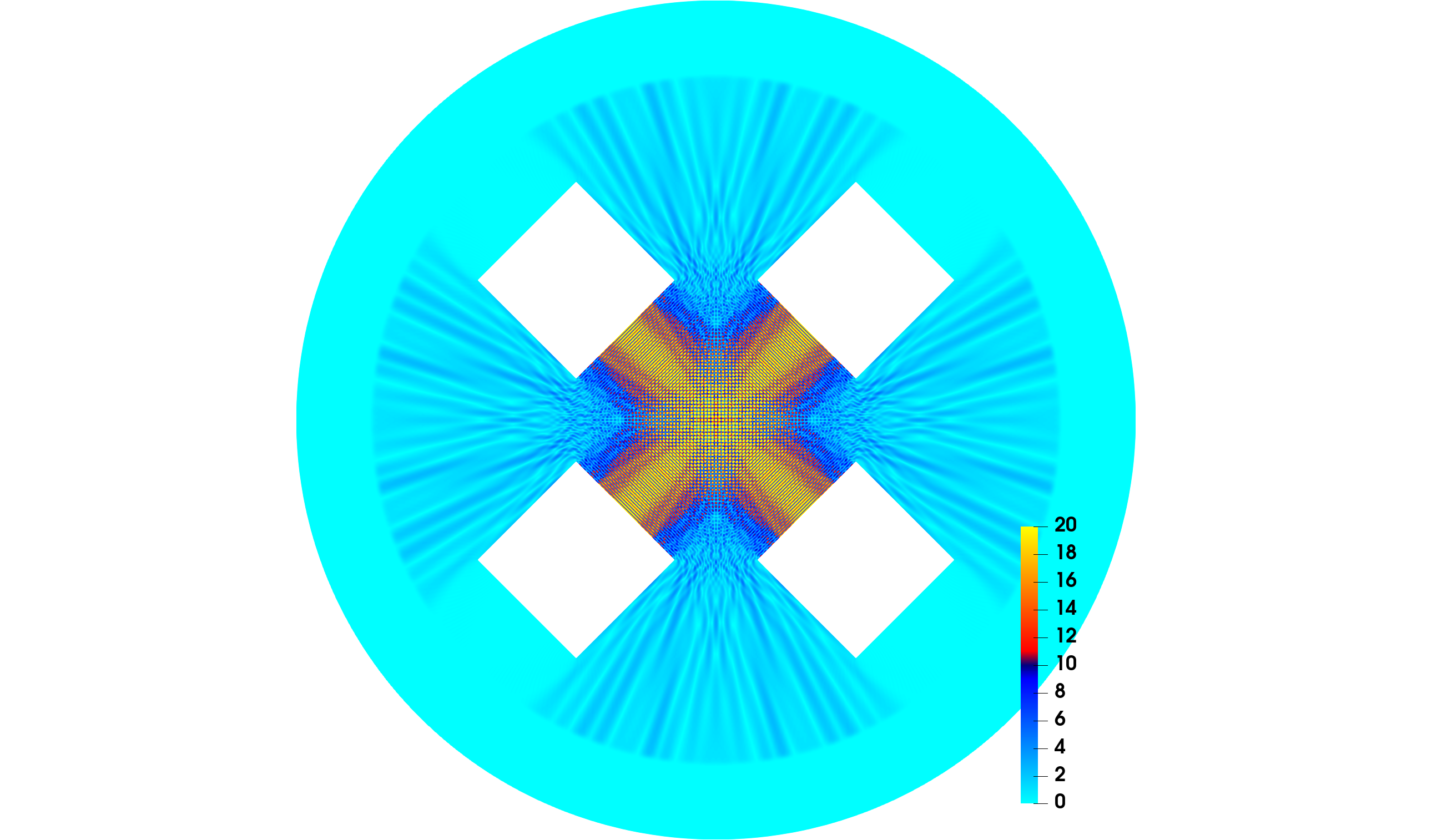}};
\end{tikzpicture}
    \caption{The absolute value of the
    total field in the exterior of the 
    domain $\Omega_-$ defined in \eqref{e:trapping}, 
    at wavenumber $k=595.641$,
    and where the data $f$ is defined by \eqref{e:rhsExperiments}.}
    \label{f:TrappingDomain}
\end{figure}

In this case, one can show  that $\rho(k)\gtrsim k^2$ when 
\beq\label{e:specialks}
k=\frac{2\pi}{0.6\sqrt{2}-0.3}\sqrt{n^2+m^2} \,\text{ with $m=0,n\in\mathbb{Z}$ or $n/m\in \{0,\pm 1,\pm 2\}$}.
\eeq
In the experiments we take $\rho(k)=k^2$. The data $f$ is given by \eqref{e:rhsExperiments} (with the source point in the centre of the four squares).

We present results under Point \ref{i:A} for both $p=2$ and $p=4$. For the results under Point \ref{i:B}, for brevity we present results under Point \ref{i:B1} for $p=2$ only, and results under Points \ref{i:B2} and \ref{i:B3} for $p=4$ only. 

\paragraph{P2 elements.} 
%\ 
%\FloatBarrier 
Figures \ref{f:P2Asquares} and \ref{f:P2B1squares} %\ref{f:P2B2squares}, and \ref{f:P2B3squares} 
show numerical results for $p=2$ in the scenarios in Points \ref{i:A} and  \ref{i:B1}.
%\ref{i:B2}, and \ref{i:B3}, respectively. 
In Figure \ref{f:P2Asquares} we see that both the fixed-point iteration and GMRES converge in a $k$-uniform number of iterations (in accordance with Theorem \ref{t:main}) but in the other three experiments we see the number of iterations for both methods increasing as $k$ increases.

\begin{remark}In each of the Figures~\ref{f:P2Asquares},
\ref{f:P2B1squares}, \ref{f:P4Asquares}, \ref{f:P4B2squares}, and
\ref{f:P4B3squares}, the fixed-point iteration behaves significantly worse at $k=194.729$ than at any other frequency tested. This can be explained by the fact that, in addition to being in $\frac{2\pi}{0.6\sqrt{2}-0.3}\mathbb{N} $, $194.729$ is very close to $\frac{2\pi}{0.6\sqrt{2}-0.3}\sqrt{2}\mathbb{N}$ and hence the solution operator is more badly behaved at this particular frequency than the others (see \eqref{e:specialks}).
\end{remark}

\begin{figure}
\generatepic{0Y}{}{2lvl_radialfix_4sqr_c8_C16_keps0_k100p9_P2}{Trapping Obstacle, P2 Elements, $\rho = k^2$, $(\frac{hk}{8})^{2p} \rho = 1$, $(\frac{H_ck}{16})^{2p} \rho = 1$, $k \in \frac{2 \pi}{0.6 \sqrt{2} - 0.3}\mathbb{N}$
}
\caption{Experiments in the exterior of a trapping obstacle in the set up of Point \ref{i:A} above (i.e., covered by Theorem \ref{t:main}).}
\label{f:P2Asquares}
\end{figure}

\begin{figure}
\generatepic{0Y}{}{2lvl_radialfix_4sqr_c8_C4_keps1_k100p9_P2}{Trapping Obstacle, P2 Elements, $\rho = k^2$, $(\frac{hk}{8})^{2p} \rho = 1$, $(\frac{H_ck}{4})^{2p} \rho = k$, $k \in$ $\frac{2 \pi}{0.6 \sqrt{2} - 0.3}\mathbb{N}$
}
\caption{
Experiments in the exterior of a trapping obstacle
with $p=2$ in the set up of Point \ref{i:B1} above (i.e., a scenario not covered by Theorem \ref{t:main}).
}
\label{f:P2B1squares}
\end{figure}

\paragraph{P4 elements.} Figures \ref{f:P4Asquares}, \ref{f:P4B2squares}, \ref{f:P4B3squares} 
show numerical results for $p=4$ in the scenarios in Points \ref{i:A}, \ref{i:B2}, and \ref{i:B3}, respectively. 
In Figure \ref{f:P2A} we see that both the fixed-point iteration and GMRES converge in a $k$-uniform number of iterations (in accordance with Theorem \ref{t:main}), but in the other two experiments we see the number of iterations for both iterations increasing as $k$ increases.

\begin{figure}
\generatepic{0Y}{}{2lvl_radialfix_4sqr_c6_C12_keps0_k100p9_P4}{Trapping Obstacle, P4 Elements, $\rho = k^2$, $(\frac{hk}{6})^{2p} \rho = 1$, $(\frac{H_ck}{12})^{2p} \rho = 1$, $k \in \frac{2 \pi}{0.6 \sqrt{2} - 0.3}\mathbb{N}$
}
\caption{Experiments in the exterior of a trapping obstacle with $p=4$ in the set up of Point \ref{i:A} above (i.e., covered by Theorem \ref{t:main}).}
\label{f:P4Asquares}
\end{figure}

\begin{figure}
\generatepic{0Y}{}{2lvl_radialfix_4sqr_ccst2_Ccst4_k100p9_P4}{Trapping Obstacle, P4 Elements, $\rho = k^2$, $hk = 2, H_ck = 4$, $k \in \frac{2 \pi}{0.6 \sqrt{2} - 0.3}\mathbb{N}$
}
\caption{Experiments in the exterior of a trapping obstacle with $p=4$ in the set up of Point \ref{i:B2} above (i.e., covered by Theorem \ref{t:main}).}
\label{f:P4B2squares}
\end{figure}

\begin{figure}
\generatepic{0Y}{}{2lvl_radialfix_4sqr_c6_Ccst4_k100p9_P4}{Trapping Obstacle, P4 Elements, $\rho = k^2$, $(\frac{hk}{6})^{2p} \rho = 1$, $H_ck = 4$, $k \in \frac{2 \pi}{0.6 \sqrt{2} - 0.3}\mathbb{N}$
}
\caption{Experiments in the exterior of a trapping obstacle with $p=4$ in the set up of Point \ref{i:B3} above (i.e., covered by Theorem \ref{t:main}).}
\label{f:P4B3squares}
    \end{figure}

\FloatBarrier

\section{Functional-analytic set up}
\label{s:preliminaries}

For convenience, and (up to multiplication by a $re^{-i\theta_0}$ for some $r>0$) without loss of generality we will assume that $\theta_0=0$, hence by~\eqref{e:strongElliptic} $\cos \theta_2\neq 0$,  and there is a constant $c>0$ such that for all $k>k_0$, 
$$
\Re a(u,u)\geq c\|u\|_{H_k^1(\Omega)}^2-\tfrac{3}{2}\|u\|_{L^2(\Omega)}^2 \quad \tfa u \in \cZ
$$
(where the sesquilinear form $a$ and space $\cZ$ are defined by \eqref{e:quadratic} and \eqref{e:cZ}, respectively).
We denote by $\cP:\cZ\to \cZ^*$ the self-adjoint operator generated by the quadratic form
$$
(\Re a)(u,v):=\tfrac{1}{2}(a(u,v)+\overline{a(v,u)}).
$$
To state more conveniently the mapping properties of $P$, $P^*$, and $\cP$ we define several new scales of Hilbert spaces. For $n=0$, set $\Hspace{0}=\Zspace{0}=L^2(\Omega)$ and for $n\geq 1$, we define the Hilbert spaces
$$
\Hspace{n}:=H_k^n(\Omega\setminus \Gamma_p)\cap H_k^1(\Omega),
$$
$$
\Zspace{n}:=\begin{cases}\cZ,&n=1\\ \Hspace{n}\cap \Bigg\{u\in \cZ\,:\, \begin{gathered}(\nu \cdot \tilde{A}k^{-1}\nabla u+\tfrac{i}{2}\tilde{b}u)|_{\Gamma_{\rm hard}}=0,\\
\nu \cdot (\tilde{A}_+k^{-1}\nabla u_+-\tilde{A}_-k^{-1}\nabla u_-+\tfrac{i}{2}(\tilde{b}_+-\tilde{b}_-)u)|_{\Gamma_p}=0
\end{gathered}\Bigg\},&n\geq 2.\end{cases} 
$$
For $n\leq -1$, we define $\Zspace{n}=(\Zspace{-n})^*$ and $\Hspace{n}=(\Hspace{-n})^*$. Then we set \beq\label{e:capcupdefs}
\ZcapHspace{n} := \Zspace{n} \cap \Hspace{n}, \qquad \ZcupHspace{n} := \Zspace{n} + \Hspace{n}.
\eeq
By the inclusion $\Zspace{n} \subset \Hspace{n}$ for $n\geq 0$ and duality,  %(and thus $\ZcapHspace{n}= \Zspace{n}$ and $\ZcupHspace{n}= \Hspace{n}$), 
%while the opposite inclusion is true for $n \leq 0$.
%Therefore
\beqs
\ZcapHspace{n} = \begin{cases}
\Zspace{n} & \textup{if } n \geq 0,\\
\Hspace{n} & \textup{if } n \leq 0,
\end{cases} \quad \tand \quad \ZcupHspace{n} = \begin{cases}
\Hspace{n} & \textup{if } n \geq 0,\\
\Zspace{n} & \textup{if } n \leq 0.
\end{cases}
\eeqs
Thus, since $\Zspace{-n}= (\Zspace{n})^*$, 
\beqs%\label{e:dualNewSpaces}
\ZcupHspace{-n} = (\ZcapHspace{n})^*, \quad\tfa n\in \mathbb{Z}.
\eeqs
One may think of $\cW^n$ (respectively $\cY^n)$ as the space ``with'' (respectively ``without'') boundary conditions, and the application of $P$ ``removes'' the boundary conditions. 

%When $v\in \ZcapHspace{n}$ for $n\geq 0$, we tefore use interchangeably $\|v\|_{H^{n}_k}$
% and 
% $\|v\|_{\ZcapHspace{n}}$, although if the boundary conditions are important, we use the latter.

For use later, we fix $\Psi_0,\Psi \in C_c^\infty(\mathbb{R};[0,\infty))$ such that 
\begin{equation}\label{e:Phi}
\begin{gathered}
\Re P+\Psi_0(\Re P)\geq c>0,\qquad \Re P_\ell+\Psi_0(\cP_\ell)\geq c>0
\\    
\supp (I-\Psi)\cap \supp \Psi_0=\emptyset. 
\end{gathered}
\end{equation}
We then set $S:=\Psi_0(\Re P)$ and define
\beq\label{e:Xnorm}
\| u\|^2_X:= 
\| \Psi(\mathcal{P})u\|^2_{\ZcapHspace{p+1}}+(\Hc k)^{-2
p}\|(I-\Psi(\mathcal{P}))u\|^2_{H_k^1}.
\eeq
and similarly
\beq\label{e:XnormLocal}
\| u\|^2_{X(\Omega_\ell)}:= 
\| \Psi(\mathcal{P})u\|^2_{H^{p+1}_k(\Omega_\ell\setminus \Gamma_p)}+(\Hc k)^{-2
p}\|(I-\Psi(\mathcal{P}))u\|^2_{H_k^1(\Omega_\ell)}.
\eeq

\section{Preasymptotic analysis of coarse problem}

\begin{lemma}\label{l:coarse}
If $(\Hc k)^{2p}\rho \leq c$ then $\Qcoarse:\fine \to \coarse$ defined by \eqref{e:galerkinCoarse} exists, is unique, and satisfies 
\beq\label{e:firstComp}
\| (I-\Qcoarse) v_h\|_X \leq C (1 + (\Hc k)^{2p}\rho) (H_ck)^{-p}\|(I-\Pi_{H_c})v_h\|_{\cZ} .
%\leq C \|v_h\|_X.
\eeq
Furthermore,
\beq\label{e:FEMLowbound}
\| (I-\Qcoarse) v_h\|_{\ZcapHspace{1-p}}\leq C (\Hc k)^{2p}\rho (H_ck)^{-p}\|(I-\Pi_{H_c})v_h\|_{\cZ}.
\eeq
% Furthermore, if $\chi_{\rm PML}\in C^\infty(\overline{\Omega})$ with $\supp \chi_{\rm PML}$ contained in the PML region of $\Omega$, then 
% \beq\label{e:FEMPMLbound}
% \|\chi_{\rm PML} (I-\Qcoarse) v_h\|_{\ZcapHspace{1-p}}\leq C (\Hc k)^{2p}
% \| (I-\Qcoarse) v_h\|_X.
% \eeq
\end{lemma}

\bpf
By the definition of $\|\cdot\|_X$ \eqref{e:Xnorm}, 
\begin{align}\nonumber
\|(I-\Qcoarse)v_h\|_{X}^2
& = \| \Psi( \cP) (I-\Qcoarse)v_h\|_{\ZcapHspace{p+1}}^2 + (\Hc k)^{-2p} \| (I- \Psi(\cP) ) (I-\Qcoarse)v_h\|_{\cZ}^2.%\\ \nonumber
%&\leq \| \Psi( \cP) (I-\Qcoarse)v_h\|_{\ZcapHspace{p+1} }
%+ \|(\Psi(\cP) - \Psi(\cP))(I-\Qcoarse)v_h\|_{\ZcapHspace{p+1}}
%\\
%&\qquad+ (\Hc k)^{-p} \| (I- \Psi(\cP) ) (I-\Qcoarse)v_h\|_{H^1_k}.
\label{e:hFEMX}
\end{align}
% The idea for the split $I= \Psi( \cP) + I-\Psi( \cP))$ is that the (high) frequencies in $I-\Psi( \cP)$ that lead to the ``bad'' mapping property \eqref{e:PsiMap1} have a ``good'' FEM estimate, while the low frequencies $\Psi( \cP)$ have a ``good'' mapping property and a ``bad'' FEM estimate.
To prove \eqref{e:firstComp}, we prove the two bounds 
\beq\label{e:hFEMX1}
\| \Psi( \cP) (I-\Qcoarse)v_h\|_{\ZcapHspace{p+1} }\leq C
(\Hc k)^{p}\rho \| (I-\Pi_{H_c})v_h\|_{\cZ},
\eeq
% \begin{align}
% \|(\Psi(\cP) - \Psi(\cP))(I-\Qcoarse)v_h\|_{\ZcapHspace{p+1}}
% \leq C (1+ (\Hc k)^{2p}\rho)  \|v_h\|_X,
% \label{e:hFEMX2}
% \end{align}
and
\begin{align}
 \| (I- \Psi(\cP) ) (I-\Qcoarse)v_h\|_{\cZ}\leq C \big(1+ (\Hc k)^{2p}\rho\big)\|(I-\Pi_{H_c})v_h\|_{\cZ}.   \label{e:hFEMX3}
\end{align}
We start with \eqref{e:hFEMX1}, i.e., 
the low-frequencies of FEM error. Here we use the bounds proved in \cite{GS3} (recapped in \cite[\S6]{GSAN}). 
By \cite[Equation 6.24]{GSAN}, 
\beq\label{e:psi1}
\|\Psi(\cP)\|_{\ZcupHspace{1-p} \to \ZcapHspace{p+1}}+\|\Psi_0(\cP)\|_{\ZcupHspace{1-p} \to \ZcapHspace{p+1}}\leq C
\eeq
and, by \cite[Equations 6.33 and 6.43]{GSAN}, if $(\Hc k)^{2p}\rho\leq c$ then
\begin{align}
&   \|(I-\Qcoarse)v_h\|_{\ZcapHspace{1-m}}
\label{e:hFEMLow}
\leq C \big((\Hc k)^m + (\Hc k)^p \rho \big) \| (I-\Pi_\Hc) v_h\|_{\cZ }.
\end{align}
This last inequality with $m=p$ is \eqref{e:FEMLowbound} (since $\rho \geq c$), and then 
combined with the bound \eqref{e:psi1} it gives \eqref{e:hFEMX1}.
%and~\eqref{e:FEMLowbound}.

We now prove~\eqref{e:hFEMX3} by modifying arguments in \cite{AGS2}. Define $\Pi_S:\cZ\to \coarse$ by 
\beq\label{e:PiS}
\langle (P+S)w_h,(I-\Pi_S)u\rangle =0,\qquad \tfa w_h\in\coarse.
\eeq
Then, by the definition of $(P^*)^{-1}$, Galerkin orthogonality, the definition $P_S:=P+S$, and the smoothing property of $S$ \eqref{e:psi1},
\begin{align}\nonumber
&\big\langle (I- \Psi(\cP))(I- \Qcoarse) v_h, v\big\rangle\\ \nonumber
&= \big\langle P(I- \Qcoarse) v_h, (P^*)^{-1}
(I- \Psi(\cP))v\big\rangle
\\ \nonumber
&= \big\langle P((I- \Qcoarse) v_h), (I-\Pi_S)(P^*)^{-1}
(I- \Psi(\cP))v\big\rangle\\ \nonumber
&= \big\langle P_S(I-\Pi_{H_c})v_h, (I-\Pi_S)(P^*)^{-1}
(I- \Psi(\cP))v\big\rangle
- \big\langle S ((I- \Qcoarse) v_h), (I-\Pi_S)(P^*)^{-1}(I- \Psi(\cP))v\big\rangle
\\ \nonumber
&
\leq C \| (I-\Pi_{H_c})v_h\|_{\cZ} 
\| (P^*)^{-1}(I- \Psi(\cP))\|_{\cZ^*\to \cZ}
\|v\|_{\cZ^*}\\
&\hspace{2cm} 
+ C \|(I- \Qcoarse) v_h\|_{\ZcupHspace{1-p}}\|(I-\Pi_S)(P^*)^{-1} (I- \Psi(\cP))v\|_{\ZcupHspace{1-p}}.
\label{e:sun1}
    \end{align}
By \eqref{e:Phi}, $P+S$ is $k$-uniformly coercive (see \cite[Theorem 6.21]{GSAN}) and then a standard Aubin-Nitsche-type duality argument (see \cite[Lemma 8.2/Corollary 8.3]{AGS2}) implies that
\begin{align}\nonumber
\|(I-\Pi_S)(P^*)^{-1} (I- \Psi(\cP))v\|_{\ZcupHspace{1-p}}
&\leq (\Hc k)^p 
\|(I-\Pi_{H_c})(P^*)^{-1} (I- \Psi(\cP))v\|_{\cZ}\\
&\leq 
(\Hc k)^p \| (P^*)^{-1}(I- \Psi(\cP))\|_{\cZ^*\to \cZ}\|v\|_{\cZ^*}.
\label{e:sun2}
    \end{align}
We claim that 
\beq\label{e:resolveHigh}
\| (P^*)^{-1}(I- \Psi(\cP))\|_{\cZ^*\to \cZ}\leq C;
\eeq
combining this with \eqref{e:sun1} and \eqref{e:sun2} gives
\begin{align*}
\|(I- \Psi(\cP))(I-\Qcoarse)v_h\|_{\cZ}\leq 
C\| (I-\Pi_{H_c})v_h\|_{\cZ}
+ C(\Hc k)^p \|(I- \Qcoarse) v_h\|_{\ZcupHspace{1-p}},
\end{align*}
so that by \eqref{e:hFEMLow} with $m=p$, the bound \eqref{e:hFEMX3} follows.

It therefore remains to prove the bound \eqref{e:resolveHigh}.
Since $P_S^*:=P^*+S$,
\beqs
(P^*+S)(P^*)^{-1}= I+ S(P^*)^{-1},
\eeqs
and thus
\begin{equation}\label{e:EFF}
    (P^*)^{-1} = (P^*_S)^{-1} + (P^*_S)^{-1} S (P^*)^{-1}
\end{equation}
so that 
\begin{equation}\label{e:sun3}
    (P^*)^{-1}(I- \Psi(\cP)) = (P^*_S)^{-1}(I- \Psi(\cP)) + (P^*_S)^{-1} S (P^*)^{-1}(I- \Psi(\cP)).
\end{equation}
By \eqref{e:psi1}, $\|I-\Psi(\cP)\|_{\cZ\to \cZ}\leq C$
so that, by self-adjointness, $\|I-\Psi(\cP)\|_{\cZ^* \to \cZ^*}\leq C$. Since $\|(P^*_S)^{-1}\|_{\cZ^*\to \cZ}\leq C$, the $\cZ^*\to \cZ$ norm of the first term in \eqref{e:sun3} is $\leq C$.

To deal with the second term in \eqref{e:sun3}, 
we let $\varphi,\widetilde\varphi \in C^\infty(\overline{\Omega})$ be such that $\supp \varphi \cap \Gamma_{\tr}=\emptyset$
$\supp(1-\varphi)\cap \supp \widetilde \varphi =\emptyset$ (i.e. $\widetilde\varphi\prec \varphi$) and $1-\widetilde\varphi$ is supported in the PML region (note that this last condition requires that the support of $\varphi$ extend into the PML region).
We write
\begin{align}(P^*_S)^{-1} S (P^*)^{-1}(I- \Psi(\cP))=(P^*_S)^{-1} S (P^*)^{-1}(I- \Psi(\cP))\varphi+(P^*_S)^{-1} S (P^*)^{-1}(I- \Psi(\cP))(1-\varphi).\label{e:sun4}
\end{align}
By \cite[Lemma 8.6]{AGS2} and the fact that 
$\supp (I-\Psi)\cap \supp \Psi_0=\emptyset$, 
\begin{align}\label{e:highFreqGood}
\|(P^*)^{-1}(I- \Psi(\cP))\varphi\|_{\cZ^*\to \cZ}\leq C,
\end{align}
so that the $\cZ^*\to \cZ$ norm of the first term on the right-hand side of \eqref{e:sun4} is bounded (Here, we use~\eqref{e:imagParts} to see that, near $(\Gamma_p\cup \partial\Omega_-)\setminus \overline{U}$, $\Im (e^{i\theta_2}P)=0$ so that $P=(1-i\tan\theta_2)\cP$ and hence the proof of~\cite[Lemma 8.6]{AGS2}, involving \cite[Lemma 6.9]{AGS2}, applies.) 
Furthermore,
\begin{align*}
    &(P^*_S)^{-1} S (P^*)^{-1}(I- \Psi(\cP))(1-\varphi)\\
    &=(P^*_S)^{-1} S (P^*)^{-1}(1-\widetilde\varphi)(I- \Psi(\cP))(1-\varphi)
    + (P^*_S)^{-1} S (P^*)^{-1}\widetilde\varphi(I- \Psi(\cP))(1-\varphi)
\\
    &= (P^*_S)^{-1} S (P^*)^{-1}(1-\widetilde\varphi)(I- \Psi(\cP))(1-\varphi)+ O(k^{-\infty})_{(H^N_k)^*\to H^N_k}
\end{align*}
by spatial pseudolocality of $\Psi(\cP)$ \cite[Theorem 6.2]{AGS2}.
Finally, by \cite[Lemma 4.4]{GLS2} (see also Lemma~\ref{l:absorb} below with $\Omega_\ell$ replaced by $\Omega$), $\|(P^*)^{-1}(1-\widetilde\varphi)\|_{\cZ^*\to \cZ}\leq C$ since 
$1-\widetilde\varphi$ is supported in the absorbing layer.
We have therefore proved \eqref{e:resolveHigh}, and the proof is complete.
\epf

\section{FEM analysis of the subdomain problems}

For $1\leq s<\frac{3}{2}$, let
    $$
    \cZ_\ell^s:=
H_{\partial\Omega}^1(\Omega_\ell)\cap H_k^s(\Omega_\ell),\qquad \|u\|_{\cZ^s_\ell}:=\|u\|_{H_k^s(\Omega_\ell)},
    $$
    with $\cZ_\ell^0=L^2$ and $\cZ_\ell^s$ for $s\in (0,1)$ defined by interpolation. We then set $\cZ_\ell^{s}:=(\cZ_\ell^{-s})^* $ for $-\frac{3}{2}<s<0$.

\begin{lemma}[Solution operator bounds in the absorbing region]
\label{l:absorb}
    Let $c>0$ and $\{D_{\alpha}\}_{\alpha\in\mathbb{N}^d}$ and $-\frac{1}{2}<s<\frac{1}{2}$. Then 
    there exists $C>0$ such that 
    for all $\varphi\in \overline{C^\infty}(\Omega_\ell)$ with $\supp \varphi\subset U_\ell$ and 
    $$
    \delta(\varphi,\{D_\alpha\},2)\geq c\delta_\ell,
    $$
     we have
    \beq\label{e:lotsofs}
    \|P_{\ell}^{-1}\varphi \|_{\cZ_\ell^{s-1}\to \cZ_\ell^{s+1}}+\|\varphi P_{\ell}^{-1} \|_{\cZ_\ell^{s-1}\to \cZ_\ell^{s+1}}\leq C.
    \eeq
    Moreover for all $M>0$, there is $C>0$ such that for all $\varphi_2\in \overline{C^\infty}(\Omega_\ell)$ with 
    $$
    d(\supp \varphi,\supp \varphi_2)\geq c\delta_\ell,\qquad  
    \delta(\varphi_2,\{D_\alpha\},2)\geq c\delta_\ell,
    $$
    and 
      \beq\label{e:lotsofs2}
    \|\varphi_2P_{\ell}^{-1}\varphi \|_{\cZ_\ell^{s-1}\to \cZ_\ell^{s+1}}+\|\varphi P_{\ell}^{-1}\varphi_2 \|_{\cZ_\ell^{s-1}\to \cZ_\ell^{s+1}}\leq C\langle k\delta_\ell\rangle ^{-M}.
    \eeq
       \end{lemma}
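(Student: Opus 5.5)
Throughout we rescale to unit size: set $\tilde x=x/\delta_\ell$ and $\hbar:=(k\delta_\ell)^{-1}$. Under this change of variables $P_\ell$ becomes a semiclassical operator at parameter $\hbar$ on a Lipschitz domain of unit scale. Its coefficients have $\hbar$-uniformly bounded $C^{1,1}$ norms, by \eqref{e:deltaLower}. The cutoffs $\varphi,\varphi_2$ become functions of unit scale. The $\cZ_\ell^{s}$ norms become the standard $\hbar$-weighted norms, with constants independent of $\ell$. So it suffices to prove the bounds with constants depending only on $c$, $\{D_\alpha\}$, $c_0$ and $\Upsilon$. When $\hbar\geq \hbar_0$ is bounded below, \eqref{e:lotsofs2} follows from \eqref{e:lotsofs}, because $\langle k\delta_\ell\rangle^{-M}$ is then bounded below.

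\emph{Step 1: the bound \eqref{e:lotsofs} for $s=0$.} The key input is the absorption \eqref{e:absorbingUl} on $U_\ell$, i.e. $\Re(e^{i\theta_1}p_\ell)\geq c_\ell(|\xi|^2+1)$ there. Let $\psi\succ\varphi$ be a unit-scale cutoff supported in $U_\ell$. For $u=P_\ell^{-1}\varphi f$ we test $a_\ell(u,\psi^2u)$ and take the real part after multiplying by $e^{i\theta_1}$. Gårding-type coercivity on $U_\ell$, where the symbol is elliptic including the zeroth-order term, gives
\[
\|\psi u\|_{H^1_k}^2\lesssim |\langle \varphi f,\psi^2 u\rangle| + \hbar\|u\|_{H^1_k(\supp\psi)}\|\psi u\|_{H_k^1}.
\]
The second term is the commutator with $\psi$, of size $k^{-1}|\nabla\psi|\lesssim \hbar$. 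This only controls $u$ near $\supp\varphi$, not globally. To get a global bound I would argue by duality and ellipticity, avoiding any dependence on $\rho_\ell$. Set $w:=P_\ell^{-1}\varphi f$. Then $P_\ell(\psi w)=\varphi f+[P_\ell,\psi]w$, and $[P_\ell,\psi]$ is supported where $\psi$ varies, which is inside $U_\ell$.

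The cleaner route is to construct a parametrix. Let $\tilde P_\ell$ be the operator on $\Omega_\ell$ obtained by replacing the coefficients outside $U_\ell$ with absorbing, elliptic ones. Then $\tilde P_\ell$ is coercive, so $\tilde P_\ell^{-1}$ is bounded $\cZ^{-1}_\ell\to\cZ^1_\ell$ uniformly. Write
\[
P_\ell^{-1}\varphi=\psi\tilde P_\ell^{-1}\varphi-P_\ell^{-1}[P_\ell,\psi]\tilde P_\ell^{-1}\varphi.
\]
The remainder involves $[P_\ell,\psi]\tilde P_\ell^{-1}\varphi$, and $\supp\nabla\psi$ is at unit distance from $\supp\varphi$. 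Within the absorbing region, $\tilde P_\ell$ is elliptic at all frequencies, so Agmon/Combes–Thomas estimates for elliptic problems give exponential decay $e^{-c/\hbar}$ of $\tilde P_\ell^{-1}\varphi$ away from $\supp\varphi$. We then absorb $\rho_\ell\leq\Upsilon\hbar^{-M}$ from \eqref{e:rhoellBound} into this $O(\hbar^\infty)$ remainder. Iterating with $\psi$ nested also yields \eqref{e:lotsofs2}: $\varphi_2 P_\ell^{-1}\varphi=\varphi_2\psi\tilde P_\ell^{-1}\varphi+O(\hbar^\infty)$, and the first term is $O(\hbar^\infty)$ by elliptic decay, since $\varphi_2$ and $\varphi$ are separated. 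Strictly, we should only use the portion of $\psi$ lying in $U_\ell$ and handle $\varphi_2$ outside $U_\ell$ by choosing $\psi$ supported near $\supp\varphi$. The bound for $\varphi P_\ell^{-1}$ follows by the same argument applied to the adjoint $P_\ell^*$, which is also absorbing on $U_\ell$ (with angle $-\theta_1$), followed by duality.

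\emph{Step 2: $s\neq0$ and the main obstacle.} The shift to $s\in(-\frac12,\frac12)$ is elliptic regularity for $\tilde P_\ell$ on a Lipschitz domain, with Dirichlet conditions on $\partial\Omega_\ell\setminus\partial\Omega$ and the given conditions on $\partial\Omega$. This is why the range is limited to $|s|<\frac12$: it is the Savaré/Nečas range of $H^{1+s}$ regularity for Lipschitz domains, and also the range in which multiplication by a unit-scale cutoff is harmless and the spaces $\cZ^s_\ell$ interpolate correctly. The interface $\Gamma_p$ is also allowed in $U_\ell$ because $s<\frac12$. I expect the main obstacle to be this uniform $H^{1\pm s}$ regularity for the modified coercive problem near the non-smooth boundary $\partial\Omega_\ell$ and near $\Gamma_p$, with constants uniform over all subdomains. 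I would handle it through the scale-$\delta_\ell$ charts in the definition of $\delta(\Omega_\ell,D_0)$, which provide a bounded number of charts and uniform bi-Lipschitz constants, together with a Nečas-type estimate applied chart by chart.
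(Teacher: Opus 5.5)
Your argument is correct, but it works differently from the paper's. The paper never modifies $P_\ell$. After rescaling, it applies the coercivity of Lemma~\ref{l:absorbMe} only to functions supported in $U_\ell$, writes $P_\ell(\varphi u)=[P_\ell,\varphi]\tilde\varphi u+\varphi P_\ell u$, and iterates over $N$ nested cutoffs inside $U_\ell$. This gives $\|\varphi u\|_{\cZ_\ell^1}\le C(k\delta_\ell)^{-N}\|\varphi_+u\|_{\cZ_\ell^1}+C\|\varphi_+P_\ell u\|_{\cZ_\ell^{-1}}$, and hence $\|\varphi P_\ell^{-1}\|\le C(1+(k\delta_\ell)^{-N}\rho_\ell)$ directly. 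The bound for $P_\ell^{-1}\varphi$ then comes from iterating the resolvent commutator identity $P_\ell^{-1}\varphi=\varphi P_\ell^{-1}-P_\ell^{-1}\tilde\varphi[P_\ell,\varphi]\tilde\varphi P_\ell^{-1}$. The case $s\neq 0$ follows from localized Lipschitz elliptic regularity for $P_\ell$, together with the adjoint and duality.

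You go the other way round, in four steps:
\begin{itemize}
\item You build a globally coercive comparison operator. The convex combination of the coefficients of $P_\ell$ with those of $e^{-i\theta_1}(-k^{-2}\Delta+1)$ preserves the symbol inequality, and Lemma~\ref{l:absorbMe} upgrades this to form coercivity.
\item You use an exact one-step identity for $P_\ell^{-1}\varphi$.
\item You get smallness of the remainder from a Combes--Thomas conjugation rather than from iterated $(k\delta_\ell)^{-1}$ gains.
\item You obtain $\varphi P_\ell^{-1}$ by running the same argument for $P_\ell^*$.
\end{itemize}

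Your route makes the remainders of size $\rho_\ell e^{-ck\delta_\ell}$. So \eqref{e:lotsofs2} holds with exponential decay, and the argument would survive any growth of $\rho_\ell$ slower than $e^{ck\delta_\ell}$. The paper's route needs nothing beyond local coercivity and commutator bounds: there is no auxiliary boundary-value problem, no weighted estimate, and regularity is needed only for $P_\ell$.

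Two small points. First, despite your remark about avoiding $\rho_\ell$, your argument (like the paper's) does use $\rho_\ell\le\Upsilon(k\delta_\ell)^M$ to absorb the remainder. Second, in Step 2 the remainder term needs regularity of $P_\ell$ itself, not only of the modified operator. For example, $\|P_\ell^{-1}\|_{L^2\to\cZ_\ell^{s+1}}\lesssim 1+\rho_\ell$ together with $[P_\ell,\psi]:\cZ_\ell^1\to L^2$ suffices. This is routine, since it multiplies an exponentially small factor.
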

\begin{proof}
    To prove Lemma~\ref{l:absorb}, we set $y:=\delta_\ell^{-1}x$, rescaling the subdomain problem on $\Omega_\ell$ to one with scale $\sim 1$ and $\varphi$ to a cutoff with scale $\sim 1$. With $\tilde{k}=k\delta_\ell$, The operator $P_\ell$ then becomes 
    $$
    \tilde{P}_\ell=- \tilde{k}^{-2}\operatorname{div}_y (A_\ell (y\delta_\ell) \nabla_y u )-i\tilde{k}^{-1} b_\ell(y\delta_\ell)\cdot \nabla_y -n_\ell(y\delta_\ell)
    $$
Since there is $C>0$ such that 
$$
C^{-1}\|u\|_{H_{k}^s(\Omega_\ell)}\leq \delta_{\ell}^{d/2}\|u\|_{H_{\tilde{k}}^s(\delta_\ell^{-1}\Omega_\ell)}\leq C\|u\|_{H_{k}^s(\Omega_\ell)},
$$
it is enough to prove estimates in $\tilde{\cZ}_\ell^s$ spaces; i.e. rescaled $\cZ^s_\ell$ space with the $H_{\tilde{k}}^s$ norm. In the proof of this lemma, we will now abuse notation by writing $\Omega_\ell$ for $\delta_{\ell}^{-1}\Omega_\ell$ and $U_\ell$ for $\delta_{\ell}^{-1}U_\ell$.

    By Lemma~\ref{l:absorbMe}, for all  $u\in H_{\partial\Omega }^1(\Omega_\ell)$ with $\supp u\subset \delta_\ell^{-1} U_\ell$, 
    \begin{equation} \label{e:absorbMe}
\|u\|_{\tilde{\cZ}_\ell^1}\leq C\|\tilde{P}_\ell u\|_{\tilde{\cZ}_\ell^{-1}}.   
    \end{equation}   
   Then, for $\tilde{\varphi}\in \overline{C^\infty}(\Omega_\ell)$ with $\supp \tilde{\varphi}\subset \overline{U_\ell}$ and $\supp(1-\tilde{\varphi})\cap \supp \nabla \varphi=\emptyset$, and $\supp \tilde{\varphi}\cap \partial\Omega_\ell\setminus \partial\Omega=\emptyset$, 
    $$
    \tilde{P}_\ell \varphi u= [\tilde{P}_\ell,\varphi]\tilde{\varphi}u+\varphi \tilde{P}_\ell u
    $$
    and hence, using~\eqref{e:absorbMe}
    $$
    \|\varphi u\|_{ \tilde{\cZ}_\ell^1}\leq C\tilde{k}^{-1}\|\tilde{\varphi}u\|_{\tilde{\cZ}_\ell^1}+C\|\varphi \tilde{P}_\ell u\|_{\tilde{\cZ}_\ell^{-1}}
    $$
    Iterating this argument, and letting $\varphi_+\in \overline{C^\infty}(U_\ell)$ with $\supp (1-\varphi_+)\cap \supp \varphi=\emptyset$ and $\supp \varphi_+\cap \supp \varphi_1=\emptyset$ we obtain
    $$
    \|\varphi u\|_{\tilde{\cZ}_\ell^1}\leq C\tilde{k}^{-N}\|\varphi_+u\|_{\tilde{\cZ}_\ell^1}+C\|\varphi_+\tilde{P}_{\ell}u\|_{\tilde{\cZ}_\ell^{-1}},
    $$
    and hence
    \beq\label{e:plant1}
    \|\varphi \tilde{P}_{\ell}^{-1}\|_{\tilde{\cZ}_\ell^{-1}\to \tilde{\cZ}_\ell^1}\leq C(1+\tilde{k}^{-N}\rho_\ell)\leq C,
    \eeq
    and 
       \beq\label{e:plant1a}
    \|\varphi \tilde{P}_{\ell}^{-1}\varphi_1\|_{\tilde{\cZ}_\ell^{-1}\to \tilde{\cZ}_\ell^1}\leq C\tilde{k}^{-N}\rho_\ell.
    \eeq
    Now, by elliptic regularity on a Lipschitz domain (see, e.g., \cite[Theorem 3]{savare1998regularity}, \cite{JeKe:95}), for $0\leq s<\frac{1}{2}$, 
    \begin{equation} \label{e:nonAdjoint}
    \|\varphi \tilde{P}_{\ell}^{-1}\|_{\tilde{\cZ}_\ell^{s-1}\to \tilde{\cZ}_\ell^{s+1}}\leq C(1+\|\varphi \tilde{P}_{\ell}^{-1}\|_{\tilde{\cZ}_\ell^{-1}\to\tilde{\cZ}_{\ell}^1})\leq C.
    \end{equation}
    and
        \begin{equation} \label{e:nonAdjointb}
    \|\varphi \tilde{P}_{\ell}^{-1}\varphi_1\|_{\tilde{\cZ}_\ell^{s-1}\to \tilde{\cZ}_\ell^{s+1}}\leq C(\|\varphi \tilde{P}_{\ell}^{-1}\varphi_1\|_{\tilde{\cZ}_\ell^{-1}\to\tilde{\cZ}_{\ell}^1})\leq C\tilde{k}^{-N}.
    \end{equation}
 Now, we show that 
 \begin{equation}
 \label{e:adjoint}
 \|\tilde{P}_\ell^{-1}\varphi\|_{\tilde{\cZ}_\ell^{s-1}\to \tilde{\cZ}_\ell^{s+1}}\leq C;
 \end{equation}
i.e., the result \eqref{e:lotsofs} for $0\leq s<1/2$.
  Indeed, observe that for $\varphi$ as above, and  $\tilde{\varphi}\in \overline{C^\infty}(\Omega_\ell)$ with $\supp \tilde{\varphi}\subset \overline{U_\ell}$ and $\supp(1-\tilde{\varphi})\cap \supp \nabla \varphi=\emptyset$, and $\supp \tilde{\varphi}\cap \partial\Omega_\ell\setminus \partial\Omega=\emptyset$, 
 $$
 \tilde{P}_{\ell}^{-1}\varphi = \varphi \tilde{P}_{\ell}^{-1}+ [P_{\ell}^{-1},\varphi]=\varphi \tilde{P}_{\ell}^{-1}- \tilde{P}_{\ell}^{-1}\tilde{\varphi}[\tilde{P}_{\ell},\varphi]\tilde{\varphi}\tilde{P}_{\ell}^{-1}
 $$
 Hence, by \eqref{e:plant1},
$$\|\tilde{P}_{\ell}^{-1}\varphi\|_{\tilde{\cZ}_\ell^{-1}\to \tilde{\cZ}_{\ell}^1}\leq C+C\|\tilde{P}_{\ell}^{-1}\tilde{\varphi}\|_{\tilde{\cZ}_{\ell}^{-1}\to \tilde{\cZ}_\ell^{1}}\tilde{k}^{-1}.
 $$
By iteration and the polynomial bound on $\rho_\ell$,
 $$
 \|\tilde{P}_{\ell}^{-1}\varphi\|_{\tilde{\cZ}_\ell^{-1}\to \tilde{\cZ}_{\ell}^1}\leq C, 
 $$
 and hence~\eqref{e:adjoint} holds by elliptic regularity. The fact that
 $$
 \|\varphi_1\tilde{P}_\ell^{-1}\varphi\|_{\tilde{\cZ}_\ell^{-1}\to \tilde{\cZ}_{\ell}^1}\leq C\tilde{k}^{-N}
 $$
 follows in the same way.

 The same arguments applied to $\tilde{P}_{\ell}^*$ imply that for $0\leq s<\frac{1}{2}$
 \begin{equation}
 \label{e:adjoint2}
 \begin{gathered}
\|\varphi(\tilde{P}^*_\ell)^{-1}\|_{\tilde{\cZ}_\ell^{s-1}\to 
 \tilde{\cZ}_\ell^{s+1}}+\|(\tilde{P}^*_\ell)^{-1}\varphi\|_{\tilde{\cZ}_\ell^{s-1}\to 
 \tilde{\cZ}_\ell^{s+1}}\leq C,\\
 \|\varphi(\tilde{P}^*_\ell)^{-1}\varphi_1\|_{\tilde{\cZ}_\ell^{s-1}\to 
 \tilde{\cZ}_\ell^{s+1}}+ \|\varphi_1(\tilde{P}^*_\ell)^{-1}\varphi\|_{\tilde{\cZ}_\ell^{s-1}\to 
 \tilde{\cZ}_\ell^{s+1}}\leq C\tilde{k}^{-N}.
 \end{gathered}
 \end{equation}
 and hence the result \eqref{e:lotsofs} for $-1/2<s<0$
 follows from~\eqref{e:adjoint2} and  duality.
\end{proof}

\begin{lemma}[Solution operator bounds at high frequency]\label{l:HFsubdomain}
With $\Phi$ as in \eqref{e:Phi}, 
there exists $C>0$ such that 
    \beq\label{e:lotsofHigh}
    \| (I-\Psi(\cP_\ell))P_{\ell}^{-1} \|_{\cZ_\ell^*\to \cZ_\ell}\leq C(\rho_\ell (k\delta_\ell)^{-1}+1).
    \eeq
\end{lemma}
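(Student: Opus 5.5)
We will mimic the proof of \eqref{e:resolveHigh} in Lemma~\ref{l:coarse}, now on the subdomain $\Omega_\ell$, after rescaling $y=\delta_\ell^{-1}x$ exactly as in the proof of Lemma~\ref{l:absorb}. Under this rescaling the effective frequency is $\tilde k=k\delta_\ell$. The factor $\rho_\ell(k\delta_\ell)^{-1}$ should appear as the cost of a single commutator: a commutator gains $\tilde k^{-1}$, and it is composed with the full solution operator, whose norm is $\rho_\ell$. Set $S_\ell:=\Psi_0(\cP_\ell)$ and $P_{\ell,S}:=P_\ell+S_\ell$. By \eqref{e:Phi} and \cite[Theorem 6.21]{GSAN}, $P_{\ell,S}$ is uniformly coercive on $\cZ_\ell$. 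As in \eqref{e:EFF}, the resolvent identity gives
\[
(I-\Psi(\cP_\ell))P_\ell^{-1}=(I-\Psi(\cP_\ell))P_{\ell,S}^{-1}+(I-\Psi(\cP_\ell))P_{\ell,S}^{-1}S_\ell P_\ell^{-1}.
\]
The first term is bounded by $C$, since $I-\Psi(\cP_\ell)$ is bounded on $\cZ_\ell$ by the subdomain analogue of \eqref{e:psi1}. For the second term, we would like to commute $S_\ell$ past $P_{\ell,S}^{-1}$ and use $(I-\Psi)\Psi_0=0$. This works exactly wherever $P_\ell$ is self-adjoint up to the constant $(1-i\tan\theta_2)$, because there $P_{\ell,S}$ is a function of $\cP_\ell$.

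We therefore localize. Choose cutoffs $\varphi,\widetilde\varphi$ at scale $\delta_\ell$, with $1-\widetilde\varphi$ supported in $U_\ell$, $\widetilde\varphi\prec\varphi$, and $\supp\varphi$ away from $\partial\Omega_\ell\setminus\partial\Omega$. Then split
\[
P_\ell^{-1}=\widetilde\varphi P_\ell^{-1}+(1-\widetilde\varphi)P_\ell^{-1}.
\]
For the absorbing piece, Lemma~\ref{l:absorb} gives $\|(1-\widetilde\varphi)P_\ell^{-1}\|_{\cZ_\ell^*\to\cZ_\ell}\le C$ directly. For the interior piece, write $\widetilde\varphi P_\ell^{-1}=P_\ell^{-1}\widetilde\varphi+P_\ell^{-1}[P_\ell,\widetilde\varphi]P_\ell^{-1}$. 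In the rescaled variables $[P_\ell,\widetilde\varphi]$ has norm $O(\tilde k^{-1})$ from $\cZ_\ell\to\cZ_\ell^*$. Moreover, we may insert a cutoff supported in $U_\ell$ in front of $[P_\ell,\widetilde\varphi]$, since $\nabla\widetilde\varphi$ lives there. Lemma~\ref{l:absorb} then controls one of the two solution operators by $C$, and the other is bounded by $\rho_\ell$. This yields the error term $C\rho_\ell(k\delta_\ell)^{-1}$.

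It remains to handle $(I-\Psi(\cP_\ell))P_\ell^{-1}\widetilde\varphi$, where the forcing is supported in the region where $P_\ell$ agrees with a multiple of $\cP_\ell$ near $\Gamma_p\cup\partial\Omega_-$ (by \eqref{e:subdomainCoeff2}). Here we invoke the analogue of \cite[Lemma 8.6]{AGS2}, namely \eqref{e:highFreqGood}, on the rescaled subdomain. This gives $\|(I-\Psi(\cP_\ell))P_\ell^{-1}\varphi\|\le C$ modulo terms localized in $U_\ell$. Those terms arise by pseudolocality of $\Psi(\cP_\ell)$ \cite[Theorem 6.2]{AGS2} and are $O(\tilde k^{-\infty})\rho_\ell$, which is acceptable by the polynomial bound \eqref{e:rhoellBound}.

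The main obstacle is this last step: applying the high-frequency estimate of \cite{AGS2} uniformly on a domain of scale $\delta_\ell$ that is only Lipschitz, with constants independent of $\ell$ and of $\tilde k\ge c_0$. The proof of \cite[Lemma 8.6]{AGS2} must be checked after rescaling with the scale bounds \eqref{e:deltaLower}. Where $\tilde k$ is bounded, we would instead fall back on the trivial bound $\|P_\ell^{-1}\|\le\rho_\ell\le C\rho_\ell\tilde k^{-1}$.
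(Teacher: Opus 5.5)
\textbf{There is a genuine gap: the key step is outsourced to a lemma that is not verified and whose hypothesis fails here.}

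Your first two reductions are fine. The term $(I-\Psi(\cP_\ell))(1-\widetilde\varphi)P_\ell^{-1}$ is bounded by Lemma~\ref{l:absorb}. The term $P_\ell^{-1}[P_\ell,\widetilde\varphi]P_\ell^{-1}$ is even $O((k\delta_\ell)^{-1})$ with no factor $\rho_\ell$: $\nabla\widetilde\varphi$ lives in $U_\ell$, so Lemma~\ref{l:absorb} controls the solution operator on \emph{both} sides. Your opening resolvent identity with $P_{\ell,S}$ is never actually used.

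The step that carries the whole content of the lemma is the bound on $(I-\Psi(\cP_\ell))P_\ell^{-1}\widetilde\varphi$. You defer it to a ``subdomain analogue'' of \cite[Lemma 8.6]{AGS2}, which you yourself flag as unchecked, and this is not a routine transfer. The paper uses that lemma only for the global operator, in the form \eqref{e:highFreqGood}. It justifies it by the exact identity $P=(1-i\tan\theta_2)\cP$ near $(\Gamma_p\cup\partial\Omega_-)\setminus\overline U$, which relies on \eqref{e:imagParts} constraining $\Im\tilde n$ as well.

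For the subdomain problems, \eqref{e:subdomainCoeff2} deliberately leaves $\Im n_\ell$ unrestricted, so that a CAP can be used in a layer $U_\ell$ that touches $\partial\Omega_-$ or $\Gamma_p$. Since $\supp\widetilde\varphi$ must reach into $U_\ell$, your assertion that $P_\ell$ is a multiple of $\cP_\ell$ near $\Gamma_p\cup\partial\Omega_-$ on the support of the forcing is false in general. There one only has $P_\ell=(1-i\tan\theta_2)\cP_\ell+V_\ell$, with a genuine potential $V_\ell$. So the hypothesis under which the cited argument is invoked is not met, and uniformity in $\ell$ and $k\delta_\ell$ is also unaddressed.

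\textbf{The missing idea is elementary and makes the citation unnecessary.} By \eqref{e:Phi}, $\Psi_0(\cP_\ell)(I-\Psi(\cP_\ell))=0$ and $\cP_\ell+\Psi_0(\cP_\ell)\ge c$. Hence for $w=(I-\Psi(\cP_\ell))u$ one has $\Re\langle P_\ell w,w\rangle=\langle(\cP_\ell+\Psi_0(\cP_\ell))w,w\rangle\ge c\|w\|_{\cZ_\ell}^2$; that is, $P_\ell$ is coercive on the high-frequency range. Combining this with $P_\ell(I-\Psi(\cP_\ell))P_\ell^{-1}=(I-\Psi(\cP_\ell))-[P_\ell,\Psi(\cP_\ell)]P_\ell^{-1}$ gives
\[
\|(I-\Psi(\cP_\ell))P_\ell^{-1}\|_{\cZ_\ell^*\to\cZ_\ell}\le C+C\big\|[P_\ell,\Psi(\cP_\ell)]P_\ell^{-1}\big\|_{\cZ_\ell^*\to\cZ_\ell^*}.
\]
Now insert a partition $\varphi_{\rm in}+\varphi_{\rm abs}+\varphi_{\rm out}=1$ between the commutator and $P_\ell^{-1}$.
\begin{itemize}
\item The $\varphi_{\rm abs}$ piece is bounded by Lemma~\ref{l:absorb}.
\item For the other two pieces, Helffer--Sj\"ostrand reduces $[\varphi,\Psi(\cP_\ell)]$ and $[P_\ell\varphi,\Psi(\cP_\ell)]$ to commutators with $\cP_\ell$ itself.
\item These are $O((k\delta_\ell)^{-1})$ in the interior, because there everything is a semiclassical differential operator at scale $\delta_\ell$.
\item They are also $O((k\delta_\ell)^{-1})$ near $\partial\Omega_\ell\cup\Gamma_p$. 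There $P_\ell=(1-i\tan\theta_2)\cP_\ell+V_\ell$ on $\supp\varphi_{\rm out}$, and $\varphi_{\rm out}$ satisfies \eqref{e:phioutD3}, so it preserves the operator domains.
\end{itemize}
Composing with $\|P_\ell^{-1}\|\le\rho_\ell$ gives exactly $C(1+\rho_\ell(k\delta_\ell)^{-1})$. This is the ``single commutator'' you anticipated in your first paragraph, but the commutator that matters is $[P_\ell,\Psi(\cP_\ell)]$, not $[P_\ell,\widetilde\varphi]$.
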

\begin{proof}
    By \eqref{e:Phi},
    \begin{align*}
    \Re \big\langle P_\ell(I-\Psi(\cP_\ell))u,(I-\Psi(\cP_\ell))u\big\rangle 
    &= \big\langle \cP_\ell (I-\Psi(\cP_\ell)),(I-\Psi(\cP_\ell))u\big\rangle 
    \\
    &=\big\langle \big(\cP_\ell + \Psi_0(\Psi)\big) (I-\Psi(\cP_\ell)),(I-\Psi(\cP_\ell))u\big\rangle
    \geq c\|(I-\Psi(\cP_\ell))u\|_{\cZ}^2,
            \end{align*}
    so that 
    $$
    \|(I-\Psi(\cP_\ell))u\|_{\cZ_\ell}\leq C\|P_\ell(I-\Psi(\cP_\ell))u\|_{\cZ_\ell^*}.
    $$
Thus, since 
$$
    P_\ell (I-\Psi(\cP_\ell))P_\ell^{-1}=-[P_\ell,\Psi(\cP_\ell)]P_\ell^{-1}+(I-\Psi(\cP_\ell)),
    $$
we have that 
    \begin{align}\nonumber
    \|(I-\Psi(\cP_\ell))P_\ell^{-1}\|_{\cZ^*_\ell\to \cZ_\ell}&\leq \|[P_\ell,\Psi(\cP_\ell)]P_\ell^{-1}\|_{\cZ_\ell^*\to \cZ_\ell^*}+\|(I-\Psi(\cP_\ell))\|_{\cZ_\ell^*\to \cZ^*_\ell}\\
    &\leq \|[P_\ell,\Psi(\cP_\ell)]P_\ell^{-1}\|_{\cZ_\ell^*\to \cZ_\ell^*}+C.
    \label{e:lotsofZells}
    \end{align}
        Now, 
        by \eqref{e:subdomainCoeff2} there exist $\varphi_{\rm in}+\varphi_{\rm abs}+\varphi_{\rm out}=1$ on $\Omega_\ell$ such that 
\begin{gather}\label{e:phiinout}
    \supp \varphi_{\rm abs}\subset U_\ell,\quad \supp \varphi_{\rm in}\cap (\partial\Omega_{\ell}\cup \Gamma_p)=\emptyset,\quad \supp \varphi_{\rm out}\cap( 
    \supp \Im e^{i\theta_2}A_\ell
    \cup 
    \supp \Im e^{i\theta_2} b_\ell)=\emptyset,\\
    \nu \cdot A_\ell\nabla \varphi_{\rm out}|_{\partial\Omega_\ell}=0,\qquad \nu \cdot (A_+\nabla (\varphi_{\rm out})_+-A_-\nabla (\varphi_{\rm out})_-)|_{\Gamma_p}=0
    \label{e:phioutD3}
    \end{gather}
    (i.e., $\varphi_{\rm out}$ can touch the boundary/interfaces only when $\Im e^{i\theta_2}A_\ell$ and $\Im e^{i\theta_2} b_\ell$ are zero).
    Thus
    $$
    [P_\ell,\Psi(\cP_\ell)]P_\ell^{-1}=[P_\ell,\Psi(\cP_\ell)]\varphi_{\rm abs}P_\ell^{-1}+[P_\ell,\Psi(\cP_\ell)]\varphi_{\rm in}P_\ell^{-1}+[P_\ell,\Psi(\cP_\ell)]\varphi_{\rm out}P_\ell^{-1}
    $$
    By Lemma~\ref{l:absorb} and the facts that $\Psi(\cP_\ell):\cZ_\ell\to \cZ_\ell$, $\Psi(\cP_\ell):\cZ_\ell^*\to \cZ_\ell^*$ and $P_\ell:\cZ_\ell\to \cZ_\ell^*$ are all uniformly bounded,
    $$
    \|[P_\ell,\Psi(\cP_\ell)]\varphi_{\rm abs}P_\ell^{-1}\|_{\cZ_\ell^*\to \cZ_\ell^*}\leq C\|[P_\ell,\Psi(\cP_\ell)]\|_{\cZ_\ell\to \cZ_\ell^*}\leq C.
    $$

        Next, write 
        \begin{equation}
        \label{e:commutators}
        \begin{gathered}
[P_\ell,\Psi(\cP_\ell)]\varphi_{\rm in}= [P_\ell \varphi_{\rm in},\Psi(\cP_\ell)]+P_\ell[\Psi(\cP_\ell),\varphi_{\rm in}],
\\
[P_\ell,\Psi(\cP_\ell)]\varphi_{\rm out}= [P_\ell \varphi_{\rm out},\Psi(\cP_\ell)]+P_\ell[\Psi(\cP_\ell),\varphi_{\rm out}].
\end{gathered}
\end{equation}
    By the Helffer--Sj\"ostrand formula (see, e.g., \cite[Theorem 14.8]{Zw:12}), 
    $$
    \Psi(\cP_\ell)=\frac{1}{\pi}\int_{\mathbb{C}} \partial_{\bar{z}}\tilde{\Psi}(z)(\cP_\ell-z)^{-1}dm_{\mathbb{C}}(z),
    $$
    where $\tilde{\Psi}(z)$ is an almost-analytic extension of $\Psi$ (see, e.g., \cite[Theorem 3.6]{Zw:12}), 
$$
[A,\Psi(\cP_\ell)] 
= \frac{1}{\pi}\int_{\mathbb{C}} \partial_{\bar{z}}\tilde{\Psi}(z)[A,(\cP_\ell-z)^{-1}]dm_{\mathbb{C}}(z)
= \frac{1}{\pi}\int_{\mathbb{C}} \partial_{\bar{z}}\tilde{\Psi}(z)(\cP_\ell-z)^{-1}[\cP_\ell,A](\cP_\ell-z)^{-1}dm_{\mathbb{C}}(z).
$$
Therefore, to show that 
    $$
    \|[A,\Psi(\cP_\ell)]\|_{\cZ_\ell\to \cZ_\ell}\leq C(k\delta_\ell)^{-1},\qquad \|[B,\Psi(\cP_\ell)]\|_{\cZ_\ell\to \cZ_\ell^*}\leq C(k\delta_\ell)^{-1},
    $$
    it is enough to show that
    \begin{equation} \label{e:commutatorAB}
    \|[A,\cP_\ell]\|_{\mathcal{D}_{\ell}^3\to \cZ_\ell^*}\leq C(k\delta_\ell)^{-1},\qquad  \|[B,\cP_\ell]\|_{\mathcal{D}_\ell^3\to \mathcal{D}_{\ell}^{-3}}\leq C(k\delta_\ell)^{-1},
    \end{equation}
    where, letting $M>0$ large enough so that $(\cP_\ell+M)>c>0$, 
    $$
    \cD_\ell^s:=(\cP_\ell+M)^{-s/2}L^2.
    $$
    (Note that $\cD_\ell^1=\cZ$ and $\cD_\ell^{-1}=\cZ^*$
    and recall that $(\cP-z)^{-1}: \cD_\ell^s \to \cD_\ell^{s+2}$ for $z\in \mathbb{C}\setminus \mathbb{R}$; see, e.g., \cite[Proposition 5.21]{AGS2}.)
    We now prove~\eqref{e:commutatorAB} with $A=\varphi_{\rm in/\rm out}$ and $B=P_\ell\varphi_{\rm in/\rm out}$; the result then follows from combining \eqref{e:commutatorAB} with \eqref{e:commutators} and \eqref{e:lotsofZells}.

    For $A=\varphi_{\rm in/\rm out}$, observe that $A:\cZ\to \cZ$ and hence for $u, v\in\cZ$, 
    $$
    |\langle [A,\cP_\ell]u,v\rangle|= |(\Re a_\ell)(Au,v)-(\Re a_\ell)(u,Av)|\leq C\tilde{k}^{-1}\|u\|_{\cZ}\|v\|_{\cZ},
    $$
    hence $\|[A,\cP_\ell]\|_{\cZ_\ell\to \cZ_\ell^*}\leq C\tilde{k}^{-1}$ as required.

    The computation for $B$ is more complicated. For $B=P_\ell \varphi_{\rm in }$, the operators are supported away from all boundaries; hence the claim follows from the fact that $P_\ell$ and $\cP_\ell$ are semiclassical differential operators with coefficients having derivatives of size $\delta_\ell^{-1}$. 

    For $B=P_\ell \varphi_{\rm out}$, observe that since $\varphi_{\rm out}:\cZ_\ell\to \cZ_\ell$ and $\varphi_{\rm out}:\cD_\ell^3\to \cD_\ell^3$
    (by the conditions \eqref{e:phioutD3}), the support condition \eqref{e:phiinout} on $\varphi_{\rm out}$ implies that $P_\ell \varphi_{\rm out}:\cD_\ell^3\to \cZ$. 
    
 Furthermore, the support condition on $\varphi_{\rm out}$ combined with the assumption \eqref{e:subdomainCoeff2} implies that $P= (1-i\tan \theta_2)\cP_\ell  +V_\ell$, where $V_\ell:=- i \sec \theta_2\Im n_\ell$, on $\supp \varphi_{\rm out}$. (Recall that $\cos \theta_2\neq 0$ since 
 \eqref{e:strongElliptic} holds and 
 we have assumed, without loss of generality, that $\theta_0=0$.)   
    Finally, $V_\ell:\cZ_\ell \to \cZ_\ell$ and hence 
    \begin{align*}
    \langle [P_\ell\varphi_{\rm out},\cP_\ell]u,v\rangle &= \langle P_\ell \varphi_{\rm out}\cP_\ell u,v\rangle -\langle \cP_\ell P_\ell \varphi_{\rm out}u,v\rangle\\
    &=a_\ell (\varphi_{\rm out}\cP_\ell u,v) - (\Re a_\ell)(P_\ell \varphi_{\rm out}u,v)\\
    &=(1-i\tan \theta_2)\Re a_\ell (\varphi_{\rm out}\cP_\ell u,v)-(\Re a_{\ell})(((1-i\tan\theta_2)\cP_\ell+V_\ell) \varphi_{\rm out}u,v)\\
    &\qquad+\langle V_\ell\varphi_{\rm out}\cP_\ell u,v\rangle\\
    &=(1-i\tan \theta_2)\Re a_\ell ([\varphi_{\rm out},\cP_\ell] u,v)-(\Re a_{\ell})(V_\ell \varphi_{\rm out}u,v)+(\Re a_\ell)( u,\varphi_{\rm out}\overline{V_\ell}v)\\
    &=(1-i\tan \theta_2)\langle \cP_\ell [\varphi_{\rm out},\cP_\ell]u,v\rangle -(\Re a_{\ell})(V_\ell \varphi_{\rm out}u,v)+(\Re a_\ell)( u,\varphi_{\rm out}\overline{V_\ell}v).
    \end{align*} 
 Now, 
 $$
 \|\cP_\ell [\varphi_{\rm out},\cP_\ell]\|_{\mathcal{D}_{\ell}^3\to \mathcal{D}_\ell^{-3}}\leq C\|[\varphi_{\rm out},\cP_\ell]\|_{\cZ_\ell\to \cZ_\ell^*}\leq C(k\delta_\ell)^{-1},
 $$
 and 
 $$
    |(\Re a_{\ell})(V_\ell \varphi_{\rm out}u,v)-(\Re a_\ell)( u,\varphi_{\rm out}\overline{V_\ell}v)|\leq C(k\delta_\ell)^{-1}\|u\|_{\cZ}\|v\|_{\cZ}.
    $$
    Hence, 
    $$
    \|[\cP_\ell\varphi_{\rm out},\cP_\ell]\|_{\mathcal{D}_\ell^3\to \mathcal{D}_\ell^{-3}}\leq C(k\delta_\ell)^{-1};
    $$
    the proof of \eqref{e:commutatorAB}, and hence also the result, is therefore complete.    
    \end{proof}

\begin{lemma}[Quasi-optimality of the FEM solution for the subdomain problems]
\label{l:subdomainFEM}
Let $0<s<\frac{1}{2}$. Then there is $c>0$ such that if 
$$
\rho_\ell (h_\ell k)^{p}+(h_\ell k)^{s}\leq c,
$$
then for all $u\in H_{\partial\Omega}^1(\Omega_\ell)$ there is a unique $u_h\in \subdomain$ satisfying
    \beq\label{e:Gogsubdomain}
    a_\ell (u_h,w_h)=a_\ell(u,w_h)\quad \tfa w_h\in \subdomain,
    \eeq
    and
    \beq\label{e:FEMLowReg}
    \|u-u_h\|_{(H_k^{p-1}(\Omega_\ell\setminus \Gamma_p)\cap H^1_{\partial}(\Omega_\ell))^*}\leq C\big((h_\ell k)^p\rho_\ell +(h_\ell k)^{s}\big)\inf_{w_h\in \subdomain}\|u-w_h\|_{H_k^1(\Omega_\ell)}.
    \eeq
\end{lemma}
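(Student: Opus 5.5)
The plan is to follow the classical Schatz-type duality argument, in the preasymptotic form of \cite{GS3} (recapped in \cite[\S6]{GSAN}), applied to the subdomain operator $P_\ell$ on the Lipschitz domain $\Omega_\ell$. The only genuinely new ingredient is the limited elliptic regularity near the artificial boundary $\partial\Omega_\ell\setminus\partial\Omega$. Everything will first be rescaled by $y=\delta_\ell^{-1}x$, as in the proof of Lemma~\ref{l:absorb}, so that $\Omega_\ell$ has scale one and $\tilde k=k\delta_\ell$.

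\emph{Step 1: split by a partition of unity.} Let $\varphi_{\rm in}+\varphi_{\rm abs}+\varphi_{\rm out}=1$ be the partition used in Lemma~\ref{l:HFsubdomain}. Away from $\partial\Omega_\ell\setminus\partial\Omega$ the operator has the same structure as the global problem: piecewise smooth coefficients, $C^{p,1}$ interfaces and scatterer boundary, and self-adjointness near $\Gamma_p\cup\partial\Omega_-$. There, the arguments of \cite{GS3} give $H^{p+1}_k$ regularity for the low-frequency part $\Psi(\cP_\ell)$ of solutions of the adjoint problem with data in $H_k^{p-1}$. Near the artificial Lipschitz boundary we cannot hope for more than $H^{1+s}_k$ regularity with $s<\tfrac12$. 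This is exactly what Lemma~\ref{l:absorb} provides: there the problem is absorbing, and $\|P_\ell^{-1}\varphi_{\rm abs}\|_{\cZ_\ell^{s-1}\to\cZ_\ell^{s+1}}\le C$. This explains the $(h_\ell k)^s$ term.

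\emph{Step 2: duality estimate.} Let $e=u-u_h$. For $f$ in the unit ball of $H_k^{p-1}(\Omega_\ell\setminus\Gamma_p)\cap H^1_\partial$, set $w=(P_\ell^*)^{-1}f$, so that $\langle e,f\rangle=a_\ell(e,w)=a_\ell(e,w-w_h)$ for any $w_h\in\subdomain$. Decompose $w=\Psi(\cP_\ell)w+(I-\Psi(\cP_\ell))w$ and localise using the partition.
\begin{itemize}
\item For the interior/out part, use $\|\Psi w\|_{H^{p+1}_k}\le C\rho_\ell$ together with piecewise polynomial approximation. This gives a contribution $(h_\ell k)^p\rho_\ell$.
\item For the high-frequency part, elliptic regularity in \cite{GS3}-style yields $H^{p+1}$ bounds on the smooth part with constant $O(1)$. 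The Lemma~\ref{l:HFsubdomain}-type bound $(1+\rho_\ell\tilde k^{-1})$ only enters at the $\cZ_\ell$ level, so this part is controlled by $(h_\ell k)^p(1+\rho_\ell)$.
\item For the absorbing part, Lemma~\ref{l:absorb} gives $\varphi_{\rm abs}w\in H^{1+s}_k$ with $O(1)$ norm, so the approximation error is $(h_\ell k)^s$.
\end{itemize}
Commutator terms from the cutoffs gain powers of $\tilde k^{-1}$. They are handled by the iteration used in Lemma~\ref{l:absorb}, with the off-support estimates \eqref{e:lotsofs2}.

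Steps 1 and 2 together give the bound
\[
\|e\|_{(H^{p-1})^*}\le C\big((h_\ell k)^p\rho_\ell+(h_\ell k)^s\big)\|e\|_{H_k^1}.
\]

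\emph{Step 3: existence, uniqueness and quasi-optimality.} Use the Schatz argument with a G\aa rding inequality. Since $\Re a_\ell(v,v)\ge c\|v\|^2_{H^1_k}-C\|v\|^2_{L^2}$, the preasymptotic bootstrap of \cite{GS3} applies: write $C\|e\|_{L^2}^2$ as $\langle S e,e\rangle$ with $S=\Psi_0(\cP_\ell)$ smoothing. Then $\|Se\|$ is controlled by the dual norm above, giving $\|e\|_{H^1_k}\le C\|u-w_h\|_{H^1_k}$ once $\rho_\ell(h_\ell k)^p+(h_\ell k)^s\le c$. Substituting back yields \eqref{e:FEMLowReg}, and uniqueness, hence existence, in finite dimensions follows by taking $u=0$.

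\emph{Main obstacle.} I expect the hard part to be Step 2 near the points where the artificial boundary meets $\Gamma_p$ or $\partial\Omega$. There the two regularity regimes interact: one needs the $H^{p+1}$ low-frequency bound and the Lipschitz $H^{1+s}$ bound to be glued with commutator errors small enough not to reintroduce a factor of $\rho_\ell$. My first attempt would be to place $\varphi_{\rm abs}$ so that all such intersections lie in $U_\ell$, where \eqref{e:subdomainCoeff2} and Lemma~\ref{l:absorb} give coercive control with no $\rho_\ell$ loss.
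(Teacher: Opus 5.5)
Your overall strategy is the paper's. It is a Schatz duality argument in which the dual solution $(P_\ell^*)^{-1}v$ is split by a cutoff into two parts. The part near the artificial boundary is bounded in $H^{1+s}_k$ with an $O(1)$ constant by Lemma~\ref{l:absorb}, giving $(h_\ell k)^s$. The part away from it is bounded in $H^{p+1}_k$ by $\rho_\ell$ via elliptic regularity, giving $(h_\ell k)^p\rho_\ell$. A G\aa rding argument then finishes the proof.

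\textbf{Step 3 fails as written when $p\geq 3$.} First, \eqref{e:Phi} does not let you write $C\|e\|^2_{L^2}$ as $\langle Se,e\rangle$. What it gives is $\Re a_\ell(e,e)+\langle Se,e\rangle\geq c\|e\|^2_{H^1_k}$. To control $\langle Se,e\rangle$ by $\|e\|_{(H^{p-1}_k(\Omega_\ell\setminus\Gamma_p)\cap H^1_\partial(\Omega_\ell))^*}$, you would need $S$ (or $S^{1/2}$) to map $L^2$ into $H^{p-1}_k$ up to $\partial\Omega_\ell$. But $\partial\Omega_\ell\setminus\partial\Omega$ is only Lipschitz; think of re-entrant corners of mesh-aligned subdomains. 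There, functions in the range of $\Psi_0(\cP_\ell)$, such as low eigenfunctions of $\cP_\ell$, are in general not even in $H^2$. This is exactly the regularity loss you flagged in Step~1.

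The fix is the one the paper uses, and it needs no smoothing operator at all. Keep the plain G\aa rding inequality with $C\|e\|_{L^2}^2$, use Galerkin orthogonality, and apply the interpolation bound
$\|e\|_{L^2}^2\leq\epsilon\|e\|^2_{H^1_k}+C_\epsilon\|e\|^2_{(H^{p-1}_k(\Omega_\ell\setminus\Gamma_p)\cap H^1_\partial(\Omega_\ell))^*}$.

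\textbf{Step 2 is heavier than needed, and your anticipated main obstacle does not arise.} The frequency splitting into $\Psi(\cP_\ell)w$ and $(I-\Psi(\cP_\ell))w$ buys nothing here. The statement only asks for the crude factor $\rho_\ell$, and elliptic regularity gives that directly.

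The paper takes a single cutoff $\varphi$ supported in $U_\ell$, with $1-\varphi$ vanishing near $\overline{\partial\Omega_\ell\setminus\partial\Omega}$. It approximates $\varphi(P_\ell^*)^{-1}v$ and $(1-\varphi)(P_\ell^*)^{-1}v$ by \emph{separate} elements $w_h^1,w_h^2\in\subdomain$. This gives
$|\langle e,v\rangle|\leq C\|e\|_{H^1_k}\big((h_\ell k)^s\|v\|_{L^2}+(h_\ell k)^p\rho_\ell\|v\|_{H^{p-1}_k(\Omega_\ell\setminus\Gamma_p)}\big)$.

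Because the two pieces are approximated separately, there is no gluing to do.
\begin{itemize}
\item Commutators appear only inside the elliptic-regularity bound for $(1-\varphi)(P_\ell^*)^{-1}v$, where losing $\rho_\ell$ is allowed.
\item The points where the artificial boundary meets $\partial\Omega$ or $\Gamma_p$ lie where $1-\varphi=0$. They are therefore handled entirely by Lemma~\ref{l:absorb}, whose $\rho_\ell$-free bound already comes from the iteration you describe.
\end{itemize}
Note also that Lemma~\ref{l:absorb} applies to $\varphi w$, not to $\varphi\,\Psi(\cP_\ell)w$. So if you keep a frequency splitting, the localisation has to come first.
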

\begin{proof}
    The proof follows the Schatz duality argument \cite{Sc:74, ScWa:96} (i.e., the Aubin--Nitsche lemma combined with a G\aa rding inequality). 
    Since $\subdomain$ is finite dimensional it is sufficient to prove the bound \eqref{e:FEMLowReg} under the assumption that $u_h$ exists. 
    Let $\varphi$ be as in Lemma~\ref{l:absorb} with $\supp (1-\varphi)\cap \overline{\partial\Omega_\ell\setminus\partial\Omega}=\emptyset$. Observe first that, for all $w_h = w_h^1 + w_h^2 \in \subdomain$,
    $$
    \begin{aligned}
        |\langle u-u_h,v\rangle|&=|\langle P_\ell(u-u_h),(P_\ell^*)^{-1}v-w_h\rangle|\\
        &=\big|\langle P_\ell(u-u_h),\varphi (P_\ell^*)^{-1}v-w^1_h\rangle+\langle P_\ell (u-u_h),(1-\varphi)(P_{\ell}^*)^{-1}v-w_h^2\rangle\big|\\
        &\leq C\|u-u_h\|_{H_k^1}\big((h_\ell k)^{s}\|v\|_{L^2}+(h_\ell k)^p\rho_\ell\|v\|_{H_k^{p-1}(\Omega_\ell \setminus \Gamma_p)}\big)
    \end{aligned}
    $$
    by Lemma \ref{l:absorb} and the polynomial approximation result \eqref{e:assIell}.
        Hence,
    $$
    \|u-u_h\|_{(H_k^{p-1}(\Omega_\ell\setminus \Gamma_p))^*}\leq C\big((h_\ell k)^{s}+(h_\ell k)^p\rho_\ell\big)\|u-u_h\|_{H_k^1}.
    $$
    Now, by the G\aa rding inequality satisfied by $a_\ell$, Galerkin orthogonality \eqref{e:Gogsubdomain}, and interpolation, for all   
      $\e>0$ there is $C_\e>0$ such that for all $w_h\in \subdomain$ 
    \begin{align*}
    \|u-u_h\|_{H_k^1}^2&\leq C|\langle P_\ell(u-u_h),u-u_h\rangle|+C\|u-u_h\|_{L^2}^2\\
    &\leq C_\e \|u-w_h\|^2_{H_k^1}+\e\|u-u_h\|^2_{H_k^1}+\e\|u-u_h\|_{H_k^1}^2+C_\e\|u-u_h\|_{(H_k^{p-1}(\Omega_\ell\setminus \Gamma_p))^*}^2\\
    &\leq C_\e \|u-w_h\|^2_{H_k^1}+\Big(2\e+C_\e\big((h_\ell k)^{s}+(h_\ell k)^p\rho_\ell\big)^2\Big)\|u-u_h\|_{H_k^1}^2.
    \end{align*}
    The result then follows by taking $\e$ small and $(h_\ell k)^s+(h_\ell k)^p\rho_\ell$ small.
\end{proof}

\section{Superapproximation and subdomain overlaps}

%\subsection{Superapproximation}

% \begin{assumption}
% There exists  $\mu_\ell, C>0$ such that
% \beq\label{e:superapproxAss}
% \| (I- \cIh)\chiell v_h\|_{H^1_k(\Omega_\ell)} \leq  \mu_\ell  \|v_h\|_{H^1_k(\Omega_\ell)}
% \eeq
% for all $v_h\in \fine$,
% and
% \beq\label{e:superapproxAss2}
% \| (I- \cIh)u\|_{H^1_k(\Omega_\ell)}^2 \leq C(h_\ell k)^{2p} \sum_{T\in \Omega_\ell}\|u\|_{H^{p+1}_k(T)}^2
% \eeq
%  for all $u\in H^{p+1}(\Omega_\ell)$.
% \end{assumption}

Recall that, by the quasi-interpolation results of \cite[Appendix D]{AGS2} (which cover curved meshes; see also, e.g.,  \cite[Theorem 22.6(i)]{ErGu:21} for analogous results for simplicial meshes), for $\ell=1,\ldots,N$, 
there exists a bounded linear operator
\beq\label{e:QI1}
\widetilde\cI_{\ell} :H^1_{\partial \Omega}(\Omega_\ell)\to \subdomain,\qquad \chiell\widetilde{\cI}_{\ell}:H^1_k(\Omega_\ell)\to \subdomain
\eeq
such that
\beq\label{e:QI2}
\widetilde\cI_{\ell} v_{\ell,h} =
v_{\ell,h} \tfa v_{\ell,h}\in \subdomain,
\eeq
\beq\label{e:QI3}
\chiell \widetilde\cI_{\ell} (1-\chiellg)=0,
\eeq
and 
\beq\label{e:QI4}
\sum_{T\cap \supp \chiell\neq \emptyset}\| (I- \widetilde\cI_{\ell})u\|_{H^{p+1}_k(T)}^2 \leq C \|u\|_{H^{p+1}_k(\Omega_\ell)}^2.
\eeq

The following lemma, which appears new and may be of independent interest, shows that if a function is $H_k^{p+1}$ close to a finite element function, then one still has a type of superapproximation.
\begin{lemma}
\label{l:splitSuper}
There exists $C>0$ such that for all $v_h\in \cV_h$ and $w\in H_k^1(\Omega_\ell)$ and $\chiellg w\in H_k^{p+1}(\Omega_\ell)$, 
\begin{equation} \label{e:superSplit}
\| (I- \cI_\ell)\chiell v_h\|_{H^1_k(\Omega_\ell)} \leq C\Big((h_\ell k)^p\|\chiellg w\|_{H_k^{p+1}}+\mu_\ell\|\chiellg(v_h-w)\|_{H_k^1(\Omega_\ell)}\Big)
\end{equation}
(where $\cI_\ell$ and $\mu_\ell$ are as in \eqref{e:assIell}). 
In particular, there is $C>0$ such that for all $v_h\in \cV_h$, 
    \beq\label{e:superapprox}
\| (I- \cI_\ell)\chiell v_h\|_{H^1_k(\Omega_\ell)} \leq C\Big( (h_\ell k)^p  + (\Hc k)^p \mu_\ell \Big) \|v_h\|_{X(\Omega_\ell)}.
% \| (I- \cI_\ell)\chiell v_h\|_X \leq \bigg( \mu_\ell + \Big(\frac{h_\ell}{\Hc}\Big)^p\Big( \frac{1}{k\delta_\ell}\Big)^{p+1} \bigg) \|v_h\|_{X(\Omega_\ell)}.
\eeq
\end{lemma}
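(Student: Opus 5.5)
The natural route is to split $\chi_\ell v_h$ into a smooth part, handled by approximation, and a discrete remainder, handled by superapproximation. Concretely, write
\[
\chi_\ell v_h=\chi_\ell\chiellg w+\chi_\ell\chiellg(v_h-w),
\]
which uses $\chi_\ell\chiellg=\chi_\ell$. The second piece is not a finite-element function, so it cannot be fed directly into the superapproximation hypothesis in \eqref{e:assIell}. We therefore replace $\chiellg(v_h-w)$ by a nearby element of $\subdomain$ using the quasi-interpolant $\widetilde\cI_\ell$ of \eqref{e:QI1}--\eqref{e:QI4}. This gives
\[
\chi_\ell v_h=\chi_\ell\chiellg w+\chi_\ell\widetilde\cI_\ell\big(\chiellg(v_h-w)\big)+\chi_\ell(I-\widetilde\cI_\ell)\big(\chiellg(v_h-w)\big).
\]
By \eqref{e:QI2}, and since $\chiellg v_h$ agrees with $v_h$ near $\supp\chi_\ell$ (using the locality \eqref{e:QI3} if needed), the last term equals
\[
-\chi_\ell(I-\widetilde\cI_\ell)\chiellg w
\]
up to terms vanishing on $\supp\chi_\ell$. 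We then apply $I-\cI_\ell$ to each of the three pieces.

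For the first piece, the approximation property in the second line of \eqref{e:assIell} applies to $u=\chi_\ell\chiellg w\in H^{p+1}_k$. Since $\chi_\ell$ is a bounded multiplier on $H^{p+1}_k$ (this follows from \eqref{e:deltaLower} together with $\delta_\ell\ge c_0k^{-1}$), we get a bound $C(h_\ell k)^p\|\chiellg w\|_{H^{p+1}_k}$. The third piece, $\chi_\ell(I-\widetilde\cI_\ell)\chiellg w$, is handled in the same way: it is piecewise $H^{p+1}_k$ and continuous, and \eqref{e:QI4} controls its elementwise $H^{p+1}_k$ norm by $\|\chiellg w\|_{H^{p+1}_k}$, so \eqref{e:assIell} again yields $(h_\ell k)^p\|\chiellg w\|_{H^{p+1}_k}$. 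For the middle piece, $z_h:=\widetilde\cI_\ell(\chiellg(v_h-w))\in\subdomain$, and the superapproximation hypothesis gives $\|(I-\cI_\ell)\chi_\ell z_h\|_{H^1_k}\le\Upsilon\mu_\ell\|z_h\|_{H^1_k}$. Boundedness of $\widetilde\cI_\ell$ in $H^1_k$ then gives $\le C\mu_\ell\|\chiellg(v_h-w)\|_{H^1_k}$. This proves \eqref{e:superSplit}.

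The step I expect to be the main obstacle is making the algebraic identity exact. The quasi-interpolant must reproduce $v_h$ on $\supp\chi_\ell$, and the cutoff $\chiellg$ must sit between $\chi_\ell$ and the region where $\widetilde\cI_\ell$ acts. I would use \eqref{e:QI3} to reduce to $\chi_\ell\widetilde\cI_\ell\chiellg=\chi_\ell\widetilde\cI_\ell$ on $\subdomain$, arranged so that $\chi_\ell\widetilde\cI_\ell v_h=\chi_\ell v_h$. With that, the decomposition is literally
\[
\chi_\ell v_h=\chi_\ell\widetilde\cI_\ell\chiellg(v_h-w)+\chi_\ell\widetilde\cI_\ell\chiellg w,
\]
and the second term is written as $\chi_\ell\chiellg w-\chi_\ell(I-\widetilde\cI_\ell)\chiellg w$.

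For \eqref{e:superapprox}, take $w=\Psi(\cP)v_h$. Then $\|\chiellg w\|_{H^{p+1}_k}\le C\|\Psi(\cP)v_h\|_{H^{p+1}_k(\Omega_\ell\setminus\Gamma_p)}$, using that $\chiellg$ is a multiplier at scale $\ge ck^{-1}$ and restricting to the support of $\chiellg$, so the first term is $\lesssim(h_\ell k)^p\|v_h\|_{X(\Omega_\ell)}$. Also
\[
\|\chiellg(v_h-w)\|_{H^1_k}\le C\|(I-\Psi(\cP))v_h\|_{H^1_k(\Omega_\ell)}\le C(\Hc k)^p\|v_h\|_{X(\Omega_\ell)},
\]
by the definition \eqref{e:XnormLocal}. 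Combining the two bounds gives \eqref{e:superapprox}.
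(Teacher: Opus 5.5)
Your proposal is correct and follows essentially the same route as the paper. You use the same three-term identity
$\chi_\ell v_h=\chi_\ell w+\chi_\ell\widetilde\cI_\ell\chiellg(v_h-w)-\chi_\ell(I-\widetilde\cI_\ell)\chiellg w$, obtained from \eqref{e:QI2}--\eqref{e:QI3}, and bound its pieces by approximation, superapproximation, and approximation combined with \eqref{e:QI4}, before specialising to $w=\Psi(\cP)v_h$. The point you flag, that $\chi_\ell\widetilde\cI_\ell v_h=\chi_\ell v_h$ for $v_h\in\fine$, is also used implicitly in the paper's proof.
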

%Scribble Page 69.

\bpf
% We prove that 
% \begin{align}\label{e:SAtoProve}
% \| (I- \cI_\ell)\chiell v_h\|_{H^1_k} \leq \big( (h_\ell k)^p (k\delta_\ell)^{-p-1} + (\Hc k)^p \mu_\ell \big) \|v_h\|_{X(\Omega_\ell)},
% \end{align}
% and then \eqref{e:superapprox} follows from \eqref{e:normEquiv1}.
By \eqref{e:QI2} and \eqref{e:QI3},
\begin{align*}
    (I- \cI_\ell) \chiell v_h 
    &= (I- \cI_\ell)\chiell w 
     +(I- \cI_\ell) \chiell (v_h-w)
     \\
&= (I- \cI_\ell)\chiell w 
     +(I- \cI_\ell) \chiell \big(v_h-\widetilde\cI_{\ell} w + \widetilde\cI_{\ell} w-  w\big)
\\     
     & = (I-\cI_\ell)\chiell w
+ (I- \cI_\ell) \chiell \widetilde\cI_{\ell}(v_h- w)
- (I- \cI_\ell) \chiell (I-\widetilde\cI_{\ell}) w
\\
& = (I-\cI_\ell)\chiell w
+ (I- \cI_\ell) \chiell \widetilde\cI_{\ell}\chiellg(v_h- w)
- (I- \cI_\ell) \chiell (I-\widetilde\cI_{\ell}) \chiellg w. 
\end{align*}
Now, 
% \begin{equation}\label{e:multchiell}
% \| \chiell w \|_{H^{p+1}(\Omega_\ell)}
% \leq C 
% \|w\|_{H^{p+1}_k(\Omega_\ell)}.
% \end{equation}
% and
by % this and
\eqref{e:assIell},
\begin{align*}
    \big\|(I-\cI_\ell)\chiell w
    \big\|_{H^1_k(\Omega_\ell)} 
    \leq C (h_\ell k)^p \| \chiell w\|_{H^{p+1}_k(\Omega_\ell)}
&\leq C (h_\ell k)^p \| \chiellg w\|_{H^{p+1}_k(\Omega_\ell)}.
\end{align*}
By \eqref{e:assIell}, \eqref{e:QI1}, and \eqref{e:XnormLocal},
\begin{align*}
   \big\| (I- \cI_\ell) \chiell \widetilde\cI_{\ell}\chiellg(v_h-w)\big\|_{H^1_k(\Omega_\ell)} 
   &\leq \mu_\ell \big\|\widetilde\cI_{\ell}\chiellg(v_h- w)\big\|_{H^1_k(\Omega_\ell)}\leq C\mu_\ell \big\|\chiellg (v_h-w)\big\|_{H^1_k
   (\Omega_\ell)
   }.
\end{align*}
By \eqref{e:assIell} and \eqref{e:QI4},
\begin{align*}
    \big\|(I- \cI_\ell) \chiell (I-\widetilde\cI_{\ell}) w\big\|_{H^1_k}^2
    &\leq 
    C (h_\ell k)^{2p} 
    \sum_{T\cap \supp \chiell\neq \emptyset}\big\|(I-\widetilde\cI_{\ell}) w\big\|_{H^{p+1}_k(T)}^2
    \leq 
    C (h_\ell k)^{2p} 
    \|\chiellg w\|_{H^{p+1}_k(\Omega_\ell)}^2.
    \end{align*}
This completes the proof of~\eqref{e:superSplit}.

To prove \eqref{e:superapprox}, we take $w=\Psi(\mathcal{P})v_h$ in~\eqref{e:superSplit} and use the definition of $\|\cdot\|_{X(\Omega_\ell)}$ (see~\eqref{e:XnormLocal}) to obtain
\begin{align*}
\|(I-\cI_\ell)\chi_\ell v_h\|_{H_k^1(\Omega_\ell)}&\leq C\Big( (h_\ell k)^p\|\Psi(\cP)v_h\|_{H_k^{p+1}(\Omega_\ell)}+\mu_\ell(H_ck)^p(H_ck)^{-p} \|(I-\Psi(\cP))v_h\|_{H_k^1(\Omega_\ell)}\\
&\leq C\big((h_\ell k)^p+\mu_\ell(H_ck)^p\big)\|v_h\|_{X(\Omega_\ell)}.
\end{align*}

\epf

\subsection{Cutoff functions associated to the overlapping subdomains}\label{s:cutoff}

Given $\chi_{\ell}$, let 
$\chiellg\in \overline{C^\infty}(\Omega_\ell\setminus\Gamma_p)$ be such that 
$$
\nu\cdot A\nabla \chiellg=0\text{ on }\Gamma_{\rm hard}\quad \tand \quad \nu \cdot (A_+\nabla (\chiellg)_+-A_-\nabla (\chiellg)_-)=0\text{ on }\Gamma_p,
$$
(compare to 
\eqref{e:domain2domain}),
\beq\label{e:chiBound}
\delta_\ell\leq \delta(\chiellg,\{D_\alpha\},p+1\big),
\eeq
\begin{align*}%\label{e:cutoffLarger1}
\chiellg P_\ell = \chiellg P,\quad
    d(\chiell, \supp(1-\chiellg))\geq c\delta_\ell, \quad\tand\quad d(\supp\chiellg,\Omega_\ell\setminus W_\ell)\geq c\delta_\ell;
  %  \quad\tand\quad \chiell \cI_\ell (1-\chiellg)=0
\end{align*}
we abbreviate the penultimate condition to $\chiell \prec \chiellg$.

We then define three sets of even larger cut-offs, supported in $\Omega_\ell$, but such that the ``transition'' from zero to one happens in the absorbing region. That is, let $\chiellgg, \chiellggg, \chiellgggg\in \overline{C^\infty}(\Omega_\ell\setminus\Gamma_p;[0,1])$ be such that  $\supp \chiellgggg\subset \Omega_\ell\cup\partial\Omega$, 
\beq
\begin{aligned}
&\qquad \chiellg \prec \chiellgg \prec\chiellggg\prec \chiellgggg,\quad
%\chiellgg(1-\chiellggg)=0 
  d( \supp(1-\chiellggg),\Omega_\ell\setminus U_\ell)\geq c\delta_\ell,\quad\tand\quad d( \chiellgg,\Omega_\ell\setminus W_\ell)\geq c\delta_\ell,
    \end{aligned}
\label{e:lotsofggg}
\end{equation}
and satisfying
$$
\delta_\ell\leq \delta(\chi_\ell^{(j)},\{D_\alpha\},p+1\big), \quad j=2,3,4.
$$
We emphasise that only $\chiell$ appears in the definition of the preconditioner \eqref{e:Q}; the other cutoffs are only used in the analysis.

\subsection{Approximate orthogonality of spaces on subdomains}

\begin{lemma}\label{l:overlap}
 For all $u\in H^1(\Omega)$, 
\begin{align}\label{e:overlap1}
   \sum_{\ell=1}^N \| u\|_{X(\Omega_\ell)}^2 \leq \tilde{\Lambda}(\{\Omega_\ell\}) \| u\|_{X(\Omega)}^2. 
\end{align}
\end{lemma}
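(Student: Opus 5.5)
The plan is to reduce the claim to a counting argument on squared local norms, which are additive over disjoint pieces of $\Omega$. By \eqref{e:XnormLocal}, for each $\ell$,
\[
\|u\|_{X(\Omega_\ell)}^2=\|\Psi(\cP)u\|^2_{H^{p+1}_k(\Omega_\ell\setminus\Gamma_p)}+(\Hc k)^{-2p}\|(I-\Psi(\cP))u\|^2_{H^1_k(\Omega_\ell)}.
\]
Both terms are integrals over $\Omega_\ell$ of nonnegative pointwise densities. Set $w_1:=\Psi(\cP)u$ and $w_2:=(I-\Psi(\cP))u$. Then
\[
\|u\|_{X(\Omega_\ell)}^2=\int_{\Omega_\ell\setminus\Gamma_p} F(x)\,dx,
\qquad
F:=\sum_{|\alpha|\le p+1}|(k^{-1}\partial)^\alpha w_1|^2+(\Hc k)^{-2p}\big(|k^{-1}\nabla w_2|^2+|w_2|^2\big).
\]
Here we use the standard (non-fractional) $H^{p+1}_k$ norm with weighted derivatives. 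The key point is that $w_1$ and $w_2$ are globally defined functions that do not depend on $\ell$; the functional calculus is applied to $u$ on all of $\Omega$ before restricting. So $F$ is a single fixed nonnegative function on $\Omega\setminus\Gamma_p$.

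Summing over $\ell$ and exchanging sum and integral gives
\[
\sum_{\ell=1}^N\|u\|_{X(\Omega_\ell)}^2=\int_{\Omega\setminus\Gamma_p}\Big(\sum_{\ell=1}^N 1_{\Omega_\ell}(x)\Big)F(x)\,dx .
\]
It therefore suffices to show that $\sum_\ell 1_{\Omega_\ell}(x)\le\tilde\Lambda(\{\Omega_\ell\})$ for every $x\in\Omega$. To see this, fix $x$ and pick some $\ell_0$ with $x\in\Omega_{\ell_0}$. Every $\ell$ with $x\in\Omega_\ell$ satisfies $\Omega_\ell\cap\Omega_{\ell_0}\ne\emptyset$, so $\ell\in\mathcal N(\ell_0)$. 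Hence the count is at most $\#\mathcal N(\ell_0)\le\tilde\Lambda$ by \eqref{e:overlapDef}. (If no $\ell$ contains $x$, the count is zero.)

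This yields $\sum_\ell\|u\|^2_{X(\Omega_\ell)}\le\tilde\Lambda\int_{\Omega\setminus\Gamma_p}F=\tilde\Lambda\|u\|^2_{X(\Omega)}$. The final identification uses that $\|\Psi(\cP)u\|_{\ZcapHspace{p+1}}=\|\Psi(\cP)u\|_{H^{p+1}_k(\Omega\setminus\Gamma_p)}$, since $\Zspace{p+1}\subset\Hspace{p+1}$ carries the $H^{p+1}_k(\Omega\setminus\Gamma_p)$ norm. This is how we read $\|\cdot\|_{X(\Omega)}$, i.e.\ as \eqref{e:XnormLocal} with $\Omega_\ell$ replaced by $\Omega$.

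The only subtle point is this consistency of norms. If the norms on $\Zspace{p+1}$ or $H^{p+1}_k$ were defined by interpolation or some other non-local construction, the pointwise-density argument would fail and one would only obtain the bound up to a constant. I would therefore first confirm that all of these norms are the integer-order ones given by sums of weighted derivatives squared. Given that, the overlap counting is immediate.
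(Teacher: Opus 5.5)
Your proof is correct and is essentially the paper's argument. The paper applies the overlap estimate \eqref{e:overlap2a} of Lemma~\ref{l:overlapH} separately to $\Psi(\mathcal{P})u$ in $H^{p+1}_k(\Omega\setminus\Gamma_p)$ and to $(I-\Psi(\mathcal{P}))u$ in $H^1_k$, then adds the two bounds; you instead inline the pointwise counting argument behind that cited lemma (each $x$ lies in at most $\tilde{\Lambda}(\{\Omega_\ell\})$ subdomains), and you read $\|\cdot\|_{X(\Omega)}$ through the integer-order $H^{p+1}_k(\Omega\setminus\Gamma_p)$ norm exactly as the paper does.
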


To prove Lemma \ref{l:overlap}, we recall the following result.
\begin{lemma}\mythmname{Consequences of the definition of $\tilde{\Lambda}(\{\Omega_\ell\})$ \eqref{e:overlapDef} \cite[Lemma 4.2]{GSV1}, \cite[Lemma 3.3]{GS4}}\label{l:overlapH}
Let $r\in \{0,1,\dots\}$ and $v\in H^{r}(\Omega)$. Then
\beq\label{e:overlap2a}
\sum_{\ell=1}^N \N{v}^2_{H^r_k(\Omega_\ell\setminus\Gamma_p)} \leq \tilde{\Lambda}(\{\Omega_\ell\})  \N{v}^2_{H^r_k(\Omega\setminus \Gamma_p)}.
% \quad\tand\quad \sum_{\ell=1}^N \N{w}^2_{H^1_k(\Omega_\ell)} \leq \Lambda \N{w}^2_{H^1_k(\Omega)}.
\eeq
Furthermore, given $v_\ell\in \Hilbert_\ell$,
\beq\label{e:overlap_new}
\bigg\|
\sum_{\ell=1}^N v_\ell 
\bigg\|^2_{H^r_k(\Omega\setminus\Gamma_p)}
\leq 2\tilde{\Lambda}(\{\Omega_\ell\})  \sum_{\ell=1}^N \N{v_\ell}^2_{H^r_k(\Omega_\ell\setminus \Gamma_p)}.
\eeq
\end{lemma}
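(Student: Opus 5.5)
The plan is to prove the two bounds of Lemma~\ref{l:overlapH} by elementary counting with the overlap number $\tilde{\Lambda}:=\tilde{\Lambda}(\{\Omega_\ell\})$ from \eqref{e:overlapDef}. Throughout, the $H^r_k(\cdot\setminus\Gamma_p)$ norm squared is written as an integral over the domain of the pointwise density $|v|_r^2(x):=\sum_{|\alpha|\le r}|(k^{-1}\partial)^\alpha v(x)|^2$, taken on each side of $\Gamma_p$. The key point is that this quantity is additive over disjoint sets, monotone under enlarging the domain, and that $\Gamma_p$ has measure zero.

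For \eqref{e:overlap2a}, I write
\[
\sum_\ell \|v\|^2_{H^r_k(\Omega_\ell\setminus\Gamma_p)}=\int_{\Omega\setminus\Gamma_p}\Big(\sum_\ell 1_{\Omega_\ell}(x)\Big)|v|_r^2(x)\,dx .
\]
Fix $x$ and pick any $\ell_0$ with $x\in\Omega_{\ell_0}$. Every $\ell$ with $x\in\Omega_\ell$ satisfies $\Omega_\ell\cap\Omega_{\ell_0}\neq\emptyset$, so $\ell\in\mathcal N(\ell_0)$. Hence $\sum_\ell 1_{\Omega_\ell}(x)\le \#\mathcal N(\ell_0)\le\tilde\Lambda$, and \eqref{e:overlap2a} follows at once.

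For \eqref{e:overlap_new}, each $v_\ell\in\cV_\ell$ is supported in $\overline{\Omega_\ell}$, extended by zero. Its derivatives up to order $r$ are then supported in $\overline{\Omega_\ell}$; for $r\ge 2$ this needs the elements of $\cV_\ell$ to be regular enough that zero extension stays in $H^r$, which is implicit in the statement. Pointwise a.e. the Cauchy--Schwarz inequality gives
\[
\Big|\sum_\ell (k^{-1}\partial)^\alpha v_\ell(x)\Big|^2\le \#\{\ell: x\in\overline{\Omega_\ell}\}\sum_\ell |(k^{-1}\partial)^\alpha v_\ell(x)|^2 .
\]
The count $\#\{\ell:x\in\overline{\Omega_\ell}\}$ is at most $\tilde\Lambda$ off the boundaries $\partial\Omega_\ell$. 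These boundaries are null sets: they are Lipschitz and finitely many.

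Summing over $\alpha$ and integrating then gives the bound with constant $\tilde\Lambda$, which is at most $2\tilde\Lambda$. Alternatively, one can expand the square as $\sum_{\ell,\ell'}\langle v_\ell,v_{\ell'}\rangle_{H^r_k}$. Only pairs with $\ell'\in\mathcal N(\ell)$ contribute, and $2|ab|\le a^2+b^2$ together with the symmetry of the neighbour relation produces the factor $2\tilde\Lambda$. I will use this second route if I want to avoid pointwise arguments at the boundaries.

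The only delicate point is the measure-zero and regularity bookkeeping: the boundaries $\partial\Omega_\ell$ and the interface $\Gamma_p$, and the requirement that zero extension preserves the broken $H^r$ norm. The pairwise-expansion argument sidesteps the pointwise count entirely, at the cost of the factor $2$, which is why the statement carries that factor.
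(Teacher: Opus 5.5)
Your argument is correct and is essentially the elementary overlap-counting argument behind the results the paper cites instead of proving (\cite{GSV1}, \cite{GS4}); the pointwise Cauchy--Schwarz route even gives \eqref{e:overlap_new} with constant $\tilde{\Lambda}(\{\Omega_\ell\})$, and your caveat that for $r\geq 2$ the zero extensions of the $v_\ell$ must lie in the broken $H^r$ space is apt, since the paper only uses that case for functions vanishing near $\partial\Omega_\ell\setminus\partial\Omega$. One small correction to your commentary: the pairwise expansion also gives the constant $\tilde{\Lambda}(\{\Omega_\ell\})$ once you use the symmetry of the neighbour relation when applying $2|ab|\leq a^2+b^2$, and it still needs $\partial\Omega_\ell$ to be a null set, so the factor $2$ in the statement is slack rather than forced by that route.
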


\bpf[Proof of Lemma \ref{l:overlap}]
By \eqref{e:overlap2a} with $w= (I-\Psi)u$,
\begin{align*}
\sum_{\ell=1}^N (\Hc k)^{-2p} \| (I-\Psi(\mathcal{P})) u\|^2_{H^1_k(\Omega_\ell)}
    \leq \tilde{\Lambda} (\Hc k)^{-2p}\|(I-\Psi(\mathcal{P})) u\|^2_{H^1_k(\Omega)}
\end{align*}
Similarly, 
\begin{align*}
\sum_{\ell=1}^N  \| \Psi(\mathcal{P}) u\|^2_{H^{p+1}_k(\Omega_\ell\setminus \Gamma_p))}
    \leq \tilde{\Lambda}\|\Psi(\mathcal{P}) u\|^2_{H^{p+1}_k(\Omega\setminus \Gamma_p))},
\end{align*}
and then \eqref{e:overlap1} follows from the combination of the last two displayed inequalities.

\epf

We also need an analogue of Lemma~\ref{l:overlapH} in negative Sobolev spaces.
\begin{lemma}\label{l:overlapNegative}
Let $1\leq m\leq p+1$, $\psi_\ell\in \overline{C^\infty}(\Omega\setminus \Gamma_p)\cap \overline{C^0(\Omega)}$ with $\supp \psi_\ell \subset \Omega_\ell$,
\beq\label{e:JeffHatesInterfaces}
\delta(\psi_\ell,\{D_\alpha\},p+1\big)
\geq \delta_\ell,
\eeq
%and
% $$
% \nu \cdot (A\nabla \psi_\ell)|_{\Gamma_{\rm hard}}=0,\qquad \nu \cdot (A_+(\nabla \psi_\ell)_+-A_-(\nabla \psi_{\ell})_-)|_{\Gamma_p}=0.
% $$
 Then for all $w \in \ZcapHspace{-m}$,
$$
\sum_{\ell=1}^N \| \psi_\ell w \|^2_{\ZcapHspace{-m}}\leq C \tilde{\Lambda} %(k\delta_\ell)^{-m}
\| w\|^2_{\ZcapHspace{-m}}. 
$$
\end{lemma}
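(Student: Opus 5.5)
The plan is to argue by duality, reducing the negative-norm estimate to the positive-index overlap bound \eqref{e:overlap2a} of Lemma~\ref{l:overlapH}. Since $m\geq 1$, we have $\ZcapHspace{-m}=\Hspace{-m}=(\Hspace{m})^*$, so for each $\ell$ we can pick $v_\ell\in\Hspace{m}$ with $\|v_\ell\|_{\Hspace{m}}=1$ and
\[
\|\psi_\ell w\|_{\ZcapHspace{-m}}\leq 2\,|\langle \psi_\ell w, v_\ell\rangle| = 2\,|\langle w, \overline{\psi_\ell} v_\ell\rangle|.
\]
Rescale by setting $a_\ell:=\|\psi_\ell w\|_{\ZcapHspace{-m}}$ and testing against $\sum_\ell a_\ell \overline{\psi_\ell} v_\ell$. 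This gives
\[
\sum_\ell a_\ell^2\leq 2\Big|\Big\langle w,\sum_\ell a_\ell\overline{\psi_\ell}v_\ell\Big\rangle\Big|\leq 2\|w\|_{\ZcapHspace{-m}}\Big\|\sum_\ell a_\ell\overline{\psi_\ell}v_\ell\Big\|_{\Hspace{m}},
\]
using the pairing between $\ZcapHspace{-m}$ and $\ZcupHspace{m}=\Hspace{m}$. Everything therefore reduces to bounding $\|\sum_\ell a_\ell\overline{\psi_\ell}v_\ell\|_{\Hspace{m}}^2$ by $C\tilde\Lambda\sum_\ell a_\ell^2$. Once that holds, dividing through yields $\sum_\ell a_\ell^2\leq C\tilde\Lambda\|w\|^2_{\ZcapHspace{-m}}$, which is the claim.

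The key step is this positive-norm estimate. Each $\overline{\psi_\ell}v_\ell$ is supported in $\Omega_\ell$ and lies in $\Hspace{m}$: it is $H^1_k(\Omega)$ because $\psi_\ell$ is continuous across $\Gamma_p$, and piecewise $H^m_k$ away from $\Gamma_p$. Applying \eqref{e:overlap_new} with $r=m$ gives
\[
\Big\|\sum_\ell a_\ell\overline{\psi_\ell}v_\ell\Big\|^2_{H^m_k(\Omega\setminus\Gamma_p)}\leq 2\tilde\Lambda\sum_\ell a_\ell^2\|\psi_\ell v_\ell\|^2_{H^m_k(\Omega_\ell\setminus\Gamma_p)}.
\]
The $H^1_k(\Omega)$ part of the $\Hspace{m}$ norm is handled the same way with $r=1$. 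It then remains to show that $\psi_\ell$ is a uniformly bounded multiplier on $H^m_k(\Omega_\ell\setminus\Gamma_p)$. By Leibniz, derivatives falling on $\psi_\ell$ cost $k^{-|\alpha|}\|\partial^\alpha\psi_\ell\|_{L^\infty}\leq D_\alpha (k\delta_\ell)^{-|\alpha|}$ by \eqref{e:JeffHatesInterfaces}. This is bounded because $\delta_\ell\geq c_0k^{-1}$ by \eqref{e:deltaLower}, and $m\leq p+1$ ensures the needed derivatives are controlled by the scale. Hence $\|\psi_\ell v_\ell\|_{\Hspace{m}}\leq C$.

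The expected obstacle is the space bookkeeping. I must check that the dual of $\ZcapHspace{-m}$ really is $\Hspace{m}$, with no boundary conditions, so that multiplication by $\psi_\ell$ need not preserve the transmission or Neumann conditions. This is precisely why the hypothesis on $\nu\cdot A\nabla\psi_\ell$ could be dropped. I must also verify that the localisation in \eqref{e:overlap_new} applies to functions supported in $\Omega_\ell$ that do not vanish on $\partial\Omega$. If the pairing identification caused trouble, I would instead argue through the definition $\Zspace{-m}=(\Zspace{m})^*$ on the relevant component.
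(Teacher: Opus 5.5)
Your proposal is correct and is essentially the paper's argument. Both proofs use duality against the boundary-condition-free space $\Hspace{m}=\ZcupHspace{m}$, a superposition of normalised local test functions weighted by the local norms, the finite-overlap bound \eqref{e:overlap_new}, and the multiplier bound for $\psi_\ell$ coming from \eqref{e:JeffHatesInterfaces} and $k\delta_\ell\geq c_0$. The paper does the multiplier step first, bounding $\|\psi_\ell w\|_{\ZcapHspace{-m}}$ by the dual norm of $w$ over functions supported in $\overline{\Omega_\ell}$, whereas you fold $\overline{\psi_\ell}$ into the test functions; this is only a reordering.

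The one detail to add is to multiply each $v_\ell$ by a unimodular constant so that $\langle \psi_\ell w, v_\ell\rangle\geq 0$. That is what lets you replace $\sum_\ell a_\ell|\langle w,\overline{\psi_\ell}v_\ell\rangle|$ by the modulus of a single pairing. The paper's supremum over $\varphi_\ell$ relies on the same normalisation implicitly.
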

\begin{proof}
By \eqref{e:JeffHatesInterfaces}, there is $C>0$ such that for all $\ell$,
\begin{equation} 
\label{e:boundNegative}
\begin{gathered}\|\psi_\ell w\|_{\ZcapHspace{-m}}\leq C\|w\|_{(\dot{H}_k^m(\Omega_\ell\setminus\Gamma_p)\cap H^{1}_{k}(\Omega_\ell))^*},\\
%H^2_{\partial\Omega}(\Omega_\ell):=\Big\{ u\in H^1_{\partial\Omega}(\Omega_\ell)\cap H^2(\Omega_\ell \setminus \Gamma_p)\,:\, \nu \cdot A\nabla u=0\text{ on }\Gamma_{\rm hard},\,\nu \cdot (A_+\nabla u_+-A_-\nabla u_-)|_{\Gamma_p}=0\Big\},\\
\text{ where } \quad \dot H_{k}^m(\Omega_\ell\setminus \Gamma_p):=\big\{u\in H_k^m(\Omega\setminus \Gamma_p)\,:\,\supp u\subset\overline{\Omega_\ell} \big\}
\end{gathered}
\end{equation}
    Let $\varphi_{\ell}\in \overline{C^\infty}(\Omega_\ell\setminus\Gamma_p)\cap H^{1}_k(\Omega_\ell)$ with  $\supp \varphi_\ell \cap \partial\Omega_{\ell}\setminus\partial\Omega=\emptyset$, $\|\varphi_\ell \|_{\dot H_{k}^m(\Omega_\ell\setminus \Gamma_p)}=1$, define $$\theta_\ell:=\|w\|_{(\dot H_{k}^m(\Omega_\ell\setminus\Gamma_p)\cap H_{\partial\Omega}^{1}(\Omega_\ell))^*},$$
    and set 
    $$
    \varphi:=\sum_\ell \theta_\ell \varphi_\ell.
    $$
    Then, by Lemma~\ref{l:overlapH}
    $$
    \|\varphi\|_{\ZcupHspace{m}}^2\leq \tilde{\Lambda} \sum_{\ell=1}^N\theta_\ell^2,
    $$
    and hence
    $$
    \|w\|_{\ZcapHspace{-m}}^2\geq \frac{|\langle w,\varphi\rangle |^2}{\|\varphi\|^2_{\ZcupHspace{m}}}\geq \tilde{\Lambda}^{-1}\frac{
\Big|\sum_{\ell}\theta_\ell \langle w,\varphi_\ell\rangle\Big|^2}{\sum_{\ell=1}^N\theta_\ell^2}= \tilde{\Lambda}^{-1}\frac{
\Big|\sum_{\ell}\|w\|_{(\dot H_{k}^{m}(\Omega_\ell\setminus\Gamma_p)\cap H^{1}_k(\Omega_\ell))^*} \langle w,\varphi_\ell\rangle\Big|^2}{\sum_{\ell=1}^N\|w\|_{(\dot H_{k}^{m}(\Omega_\ell\setminus \Gamma_p)\cap H^{1}_k(\Omega_\ell))^*}^2}.
    $$
    Taking the supremum over $\{\varphi_\ell\}_{\ell=1}^N$ then implies
    $$
    \sum_{\ell}\|w\|_{(\dot H_{k}^m(\Omega_\ell\setminus \Gamma_p)\cap H^{1}_k(\Omega_\ell))^*}^2\leq \tilde{\Lambda} \|w\|^2_{\ZcapHspace{-m}},
    $$
    and hence the lemma follows from~\eqref{e:boundNegative}
\end{proof}

\section{Proof of Theorem~\ref{t:main}}

We prove the result for left preconditioning. Since all the assumptions are invariant under taking the adjoint, the result for right preconditioning follows (see, e.g., \cite[Equation 1.8]{GS4}).

%(since they involve either the real part of the coefficients or the modulus of the imaginary part)

\subsection{Definition of the preconditioner}

Recall from \eqref{e:galerkinCoarse} that $\Qcoarse: \fine \to \coarse$ is defined by 
\begin{align}
    a(\Qcoarse u_h, v_{\Hc}) =a(u_h, v_{\Hc}) \quad\tfa v_{\Hc}\in \coarse,
\end{align}
and recall 
from \eqref{e:QellIntro} that
$\Qsub:\fine \to \subdomain$ is defined by
\begin{align}\label{e:subdomain}
    a_\ell(\Qsub u_h, v_{\ell,h}) =a(u_h, E_{\ell} v_{\ell,h}) \quad\tfa v_{\ell,h}\in \subdomain
\end{align}
where $E_{\ell}: H_{\partial\Omega}^1(\Omega_\ell)\to \cZ$ denotes extension by zero. 
The preconditioner $B_L^{-1}$ defined by 
\eqref{e:BLinverse} then 
satisfies $B_{L}^{-1}A= Q\iota$, where $Q:\cZ\to \fine$ is given by
\begin{align}\label{e:Q}
Q := \Qcoarse+\sum_{\ell=1}^N 
%\cR_\ell^*
\cI_\ell \big(\chiell \Qsub(I-\Qcoarse)\big).
 \end{align}

\subsection{Subdomain problems at the continuous level}

Recall that $P:\cZ\to \cZ^*$ is the operator defined by $a$ and $P_\ell:\cV_\ell \to \cV_\ell^*$ that defined by $a_\ell$. 
The subdomain problems defined by \eqref{e:subdomain} can then be written as
\begin{align}\label{e:subdomain2}
    \langle P_\ell \Qsub u_h, v_{\ell,h} \rangle = \langle P u_h , E_{\ell} v_{\ell,h}\rangle \quad \tfa v_{\ell,h} \in \subdomain.
\end{align}
Recall from \eqref{e:Qell} that the ``continuous-level counterparts" of $\Qsub: \cV_h\to \subdomain$, $Q^\ell:\fine\to \cV_\ell$,
are defined by 
$Q^\ell:= P_\ell^{-1} E_{\ell}^* P$.
% as follows. 
% Let $Q^\ell: \cV_h \to \cV_\ell$ be defined by 
% \begin{align}\label{e:subdomainCts}
%     \langle P_\ell Q^\ell u_h, v_{\ell} \rangle = \langle P u_h , E_{\ell} v_{\ell}\rangle \quad \tfa v_{\ell} \in H^1_{\partial\Omega}(\Omega_\ell)=:\cV_\ell;
% \end{align}
% %where $P_\ell : \cV_\ell \to (\cV_\ell)^*$; 
% i.e., 
% \begin{align}%\label{e:Qell}
%     P_\ell Q^\ell u_h = E_{\ell}^* P u_h \in (\cV_\ell)^* \quad \Rightarrow\quad\label{e:Qell}
% Q^\ell u_h = P_\ell^{-1} E_{\ell}^* P u_h\in \cV_\ell.
% \end{align}

\subsection{Start of the proof of Theorem \ref{t:main}}

We first observe that, by Lemma~\ref{l:coarse} together with the definition of $Q$~\eqref{e:Q}, 
$$
Q\Pi_{H_c}=\Pi_{H_c},
$$
where $\Pi_{H_c}$ denotes that $\cZ$-orthogonal projector onto $\coarse$. 
Hence, we may decompose $\fine=\Pi_{H_c}\fine\oplus (I-\Pi_{H_c})\fine$ and, written with respect to this decomposition,
$$
(Q-I)\iota=\begin{pmatrix} 0&\Pi_{H_c}(Q-I)\iota(I-\Pi_{H_c})\\ 0&(I-\Pi_{H_c})(Q-I)\iota(I-\Pi_{H_c})\end{pmatrix}
$$
Hence, to prove Theorem~\ref{t:main} it is enough to show that 
\begin{equation}
\label{e:theGoal}
\begin{gathered}
\begin{aligned}
&\|(I-\Pi_{H_c})(Q-I)\iota\|_{\cZ\to\cZ}
\\
&\leq C\Lambda\Bigg[\max_\ell \big\{\rho_\ell (k\delta_\ell)^{-1}\big\}\rho (H_ck)^{2p}+\max_\ell \rho_\ell (H_ck)^{2p}
 +\max_\ell \langle k\delta_\ell\rangle^{-M}(H_ck)^p\\
 &\qquad\qquad+ (h /H_c)^{p}  +  \max_\ell\mu_\ell  + \max_{\ell}((h_\ell k)^{p}\rho_{\ell}+(h_\ell k)^s)\Big(\rho_\ell(h_\ell k)^{p}+\rho_\ell(k\delta_\ell)^{-1}+1\Big)\Bigg]=: \e
,\end{aligned}
 \\
 \|(Q-I)\iota\|_{\cZ\to \cZ}\leq \e+C\Lambda\Big(\max_\ell \big(\rho_\ell (k\delta_\ell)^{-1}\big)\rho (H_ck)^{p}+\max_\ell \rho_\ell (H_ck)^{p}+\max_\ell \langle k \delta_\ell\rangle^{-M}\Big).\end{gathered}
\end{equation}
Indeed, once we have~\eqref{e:theGoal}, then 
$$
\|[(Q-I)\iota]^Ku\|_{\cZ}\leq C^K\e^{K-1}\|(Q-I)\iota\|_{\cZ\to \cZ}\|u\|_{\cZ}.
$$

Therefore, it remains to prove~\eqref{e:theGoal}.
We start by decomposing $Q-I$.  Using the facts that $\{\chiell\}_{\ell=1}^N$ is a partition of unity, $\cR_\ell^* \chiell \cR_\ell = \chiell$, and
 $\chiellg\equiv 1$ on $\supp \chiell$ (so that $\chiell =\chiell \chiellg$),  for all $u_h\in \Hilbert_h$,
\begin{align*}%\nonumber
u_h= \sum_{\ell=1}^N\cR_\ell^*\chi_{\ell}\cR_\ell u_h &= \sum_{\ell=1}^N\cR_\ell^*\Big(\cI_\ell(\chiell \cR_\ell u_h)+
(I-\cI_\ell)\chi_{\ell}\cR_\ell u_h\Big).
% &=\sum_{\ell=1}^N\cR_\ell^*\Big(\cI_\ell(\chiell\chiellg \cR_\ell u_h)+
% (I-\cI_\ell)\chiell\cR_\ell u_h\Big). 
%\label{e:magic1}
\end{align*}
By  \eqref{e:Q} and the last displayed equation (with $u_h=(I-\Qcoarse)v_h$), for $v_h\in \Hilbert_h$,
\beq
\begin{aligned}
(Q- I)v_h &= -(I-\Qcoarse)v_h+ \sum_{\ell=1}^N\cR_\ell^* \cI_\ell\big(\chiell Q^\ell_h (I-\Qcoarse)v_h\big)\\ 
&= \sum_{\ell=1}^N\cR_\ell^*\Big[- \cI_\ell (\chiell \cR_\ell(I-\Qcoarse)v_h)+ \cI_\ell\big(\chiell Q^\ell_h (I-\Qcoarse)v_h\big)\\
&\hspace{6.3cm} -(I-\cI_\ell)\chiell\cR_\ell(I-\Qcoarse)v_h\Big]\\
&= \sum_{\ell=1}^N\cR_\ell^*\Big[ \cI_\ell\big(\chiell \big(Q^\ell_h- \cR_\ell\big)(I-\Qcoarse)v_h\big)-(I-\cI_\ell)\chiell\cR_\ell(I-\Qcoarse)v_h\Big]\\
&= \sum_{\ell=1}^N\cR_\ell^*\Big[
\big(\chiell \big(Q^\ell- \cR_\ell\big)(I-\Qcoarse)v_h\big)
-\big(\chiell \big(Q^\ell- Q^\ell_h\big)(I-\Qcoarse)v_h\big)\\
&\hspace{2cm}
-(I-\cI_\ell)\big(\chiell \big(Q^\ell_h- \cR_\ell\big)(I-\Qcoarse)v_h\big)-(I-\cI_\ell)\chiell\cR_\ell(I-\Qcoarse)v_h\Big].
\label{e:start1}
\end{aligned}
\eeq

\subsection{Study of the local problems at the continuous level}

In this subsection, we prove two results operator $Q^\ell (I-\Qcoarse)$. The first -- Lemma \ref{l:ctsPart} -- splits 
$Q^\ell (I-\Qcoarse)$ into ``low'' and ``high" frequency components. 
The second -- Lemma \ref{l:ctsDifference} -- studies the difference between $Q^\ell (I-\Qcoarse)$ and $I-\Qcoarse$.

% we study the difference %(when restricted by $\chi_\ell$) 
% between the identity operator and the continuous solution operator $Q_\ell$  when acting on the error in the coarse space solve (i.e. $(I-\Qcoarse)v_h$ for $v_h\in \fine$).

\begin{lemma}\label{l:ctsPart} 
     Given $M>0$ there exists $C>0$ such that for all $\ell$, $\mathscr{L}_\ell    :\fine \to H^{p+1}(\Omega_{\ell})\cap H^1_{\partial\Omega}(\Omega_{\ell})$ and $\mathscr{H}_\ell:\fine \to  H^1_{\partial\Omega}(\Omega_\ell)$ such that for all $v_h\in \fine $,
    $$
     Q^\ell(I-\Qcoarse)v_h=\mathscr{L}_\ell v_h+\mathscr{H}_\ell v_h,
    $$
    with 
    \begin{equation}
    \begin{aligned}\label{e:ctsPartBound}
     \|\mathscr{L}_\ell v_h\|_{\ZcapHspace{p+1}(\Omega_{\ell})}&\leq
      C \rho_{\ell}
(H_ck)^{p}\sum_{j=1}^N \langle kd(\Omega_j,\Omega_{\ell})\rangle^{-M}\|(I-\Qcoarse)v_h\|_{X(\Omega_j)}\\
\|\mathscr{H}_\ell v_h\|_{H_k^1(\Omega_{\ell})}& \leq
      C (\rho_{\ell}(k\delta_\ell)^{-1}+1)
(H_ck)^{p}\sum_{j=1}^N \langle kd(\Omega_j,\Omega_{\ell})\rangle^{-M}\|(I-\Qcoarse)v_h\|_{X(\Omega_j)}\\
%\sum_{j=1}^N e^{-c\frac{d(\Omega_j,\Omega_\ell)k}{2H_c k}}\|(I-\Qcoarse)v_h\|_{X(\Omega_j)}
%\label{e:oldBbound}
       \end{aligned}
       \end{equation}
%and $(1-\chiellg)\mathscr{A}_\ell = (1-\chiellg)\mathscr{B}_\ell=0.$
       \end{lemma}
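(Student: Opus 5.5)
The natural starting point is to write $u:=(I-\Qcoarse)v_h$ and use the definition $Q^\ell=P_\ell^{-1}E_\ell^*P$. Because $a_\ell=a$ on $W_\ell$ and $\chiellg P_\ell=\chiellg P$, the commutator identity gives, with cutoffs $\chiellg\prec\chiellgg\prec\chiellggg\prec\chiellgggg$ from \S\ref{s:cutoff},
\begin{equation*}
E_\ell^*Pu=E_\ell^*P\chiellgg u+E_\ell^*P(1-\chiellgg)u=P_\ell\chiellgg u-[P,\chiellgg]u+E_\ell^*P(1-\chiellgg)u .
\end{equation*}
Applying $P_\ell^{-1}$ then yields
\begin{equation*}
Q^\ell u=\chiellgg u-P_\ell^{-1}[P,\chiellgg]u+P_\ell^{-1}E_\ell^*P(1-\chiellgg)u .
\end{equation*}

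The term $E_\ell^*P(1-\chiellgg)u$ is supported where $1-\chiellgg\neq0$. This region lies either outside $\supp\chiellgggg$, where it is invisible after restriction to $\Omega_\ell$ since $E_\ell^*$ only tests against $H^1_{\partial\Omega}(\Omega_\ell)$, or inside the absorbing layer $U_\ell$. There we use the Galerkin orthogonality of $Pu$ to $\coarse$: $\langle Pu,E_\ell w\rangle=\langle Pu,E_\ell w-\Pi w\rangle$ for a coarse approximant of $w=(P_\ell^*)^{-1}\varphi$. Here $\varphi$ is a cutoff in $U_\ell$, and Lemma~\ref{l:absorb} gives $w$ in $H^{s+1}$, with smoothness of order $p+1$ away from the Lipschitz boundary after a further cutoff.

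In practice it is cleaner to bound $Pu$ in the negative norm $\ZcapHspace{-p-1}$ localised. By \eqref{e:FEMLowbound}, duality and Lemma~\ref{l:overlapNegative}, the local negative norm of $u$ is $\lesssim (H_ck)^p\|u\|_X$, and this is in fact directly available from the $X$ norm. The local pieces in $\Omega_j$ then enter with weights $\langle kd(\Omega_j,\Omega_\ell)\rangle^{-M}$, which come from the off-diagonal decay \eqref{e:lotsofs2} of $P_\ell^{-1}$ and from the pseudolocality of $\Psi(\cP)$ (\cite[Theorem 6.2]{AGS2}).

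Next, split $u=\Psi(\cP)u+(I-\Psi(\cP))u$ inside the commutator term. I would define $\mathscr{L}_\ell$ to collect:
\begin{itemize}
\item[(a)] $\chiellgg\Psi(\cP)u$, which lies in $H^{p+1}$ with norm $\le\|u\|_{X(\Omega_\ell)}$;
\item[(b)] $-\Psi(\cP_\ell)P_\ell^{-1}[P,\chiellgg]u$ together with the absorbing-layer term.
\end{itemize}
Here $\Psi(\cP_\ell)$ maps into $H^{p+1}$ by the analogue of \eqref{e:psi1}, and $P_\ell^{-1}$ costs $\rho_\ell$. The commutator $[P,\chiellgg]$ acting from the relevant low norm needs to be placed in $\cZ_\ell^*$ with a factor $(H_ck)^p\|u\|_{X}$. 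For $\Psi(\cP)u$ this holds trivially since $(H_ck)^{p}\geq$ its share after using $\|\Psi u\|_{H^1}\le\|\Psi u\|_{H^{p+1}}$. This point needs care, and I would rather use the negative norm bound \eqref{e:FEMLowbound} transferred through $\Psi$.

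Then $\mathscr{H}_\ell$ collects the rest: $\chiellgg(I-\Psi(\cP))u$, of $H^1$ size $(H_ck)^p\|u\|_X$ directly, and $(I-\Psi(\cP_\ell))P_\ell^{-1}[P,\chiellgg]u$. For the latter, Lemma~\ref{l:HFsubdomain} gives the factor $\rho_\ell(k\delta_\ell)^{-1}+1$, and the commutator contributes $\|[P,\chiellgg]u\|_{\cZ^*}\lesssim\|u\|_{H^1(\supp\nabla\chiellgg)}\lesssim(H_ck)^p\|u\|_X$.

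The main obstacle is the $\mathscr{L}_\ell$ bound, which needs the factor $\rho_\ell(H_ck)^p$ rather than $\rho_\ell$ alone. The high-frequency part of $u$ has $H^1$ norm $(H_ck)^p\|u\|_X$, which is fine. The low-frequency part, however, only has $\|\Psi u\|_{H^{p+1}}\le\|u\|_X$ without the gain. To recover the gain I would exploit that $[P,\chiellgg]$ is supported where $a_\ell$ has no absorption and bound $\Psi(\cP_\ell)P_\ell^{-1}[P,\chiellgg]u$ in $H^{p+1}$ through the weak norm $\|u\|_{\ZcapHspace{1-p}}$ localised near $\Omega_\ell$. By \eqref{e:FEMLowbound}-type estimates this is $\lesssim(H_ck)^p\|u\|_X$. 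This uses that $\Psi(\cP_\ell)P_\ell^{-1}[P,\chiellgg]$ maps $\ZcapHspace{1-p}\to\ZcapHspace{p+1}$ with norm $\lesssim\rho_\ell$: the commutator costs one derivative, $P_\ell^{-1}$ costs $\rho_\ell$ on $H^{-1}\to H^1$, and $\Psi$ smooths arbitrarily. The associated localisation errors are $O(\langle k\delta_\ell\rangle^{-M})$ and are absorbed into the sum over $j$.
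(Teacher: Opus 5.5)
Your proposal has a genuine gap: the decomposition discards the cancellation that makes $Q^\ell(I-\Qcoarse)v_h$ small, so the individual pieces cannot meet the required bounds.

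\textbf{Where it fails.} Write $u:=(I-\Qcoarse)v_h$. The lemma asserts that $Q^\ell u$ is of size $(H_ck)^p\|u\|_X$, up to $\rho_\ell$ factors, but $\chiellgg u$ is not of that size. The definition of $\|\cdot\|_X$ gives only $\|\chiellgg\Psi(\cP)u\|_{\ZcapHspace{p+1}}\lesssim\|u\|_X$. The most Lemma~\ref{l:coarse} yields, via $\|(I-\Pi_{H_c})v_h\|_{\cZ}=\|(I-\Pi_{H_c})u\|_{\cZ}\lesssim(H_ck)^p\|u\|_X$ and \eqref{e:hFEMX1}, is $(H_ck)^{2p}\rho\|u\|_X$. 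This exceeds the required $\rho_\ell(H_ck)^p\|u\|_X$ by the factor $(H_ck)^p\rho/\rho_\ell$, which is unbounded in exactly the preasymptotic regime the theorem targets.
\begin{itemize}
\item Your item (a) therefore already violates the $\mathscr{L}_\ell$ bound, and you never address it.
\item Your claim $\|[P,\chiellgg]u\|_{\cZ^*}\lesssim(H_ck)^p\|u\|_X$, used for $\mathscr{H}_\ell$, fails for the $\Psi(\cP)u$ component for the same reason.
\item The proposed rescue through $\|u\|_{\ZcapHspace{1-p}}$ does not help. \eqref{e:FEMLowbound} is stated in terms of $\|(I-\Pi_{H_c})v_h\|_{\cZ}$, so it again gives only $(H_ck)^{2p}\rho\|u\|_X$, and only globally rather than as the localised sum over $j$. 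It is not ``directly available from the $X$ norm''. This is why the paper keeps the $\ZcapHspace{1-p}$ norm explicit in Lemma~\ref{l:ctsDifference}, whose decomposition is essentially the one you wrote down.
\item Your first identity is also off: $E_\ell^*P\chiellgg u=P_\ell\chiellgg u$ exactly. The commutator appears only if the last term is $E_\ell^*(1-\chiellgg)Pu$.
\end{itemize}

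\textbf{The missing idea.} Apply Galerkin orthogonality to the \emph{whole} functional $E_\ell^*Pu$ first, and then split the subdomain solution operator rather than $u$.
\begin{itemize}
\item Since $\langle Pu,w_{H_c}\rangle=0$ for all $w_{H_c}\in\coarse$, one has $Pu=(I-(\Pi^c_{\cZ})^*)Pu$, where $\Pi^c_{\cZ}$ is the $\cZ$-orthogonal projection onto $\coarse$.
\item On $P\Psi(\cP)u$, use $\|I-(\Pi^c_{\cZ})^*\|_{\ZcupHspace{p-1}\to\cZ^*}\lesssim(H_ck)^p$. On $P(I-\Psi(\cP))u$, use $\cZ^*$-boundedness, since $\|(I-\Psi(\cP))u\|_{\cZ}\le(H_ck)^p\|u\|_X$. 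This gives the gain for both frequency components.
\item The weights $\langle kd(\Omega_j,\Omega_\ell)\rangle^{-M}$ come from the off-diagonal decay \eqref{e:expDecay1} of the coarse projection, not from \eqref{e:lotsofs2} or pseudolocality of $\Psi(\cP)$. Since $P$ is local, without $\Pi^c_{\cZ}$ no distant $\Omega_j$ would enter at all.
\end{itemize}
This is Lemma~\ref{l:sneaky}:
\[
\|E_\ell^*Pu\|_{(H^1_{\partial\Omega}(\Omega_\ell))^*}\lesssim(H_ck)^p\sum_j\langle kd(\Omega_j,\Omega_\ell)\rangle^{-M}\|u\|_{X(\Omega_j)}.
\]
Then set $\mathscr{L}_\ell:=\chiellgggg\Psi(\cP_\ell)P_\ell^{-1}E_\ell^*P(I-\Qcoarse)$. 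This is controlled because $\chiellgggg\Psi(\cP_\ell)$ maps $H^1_{\partial\Omega}(\Omega_\ell)$ boundedly into $\ZcapHspace{p+1}$ and $\|P_\ell^{-1}\|=\rho_\ell$. Set $\mathscr{H}_\ell:=\chiellgggg(I-\Psi(\cP_\ell))P_\ell^{-1}E_\ell^*P(I-\Qcoarse)+(1-\chiellgggg)P_\ell^{-1}E_\ell^*P(I-\Qcoarse)$, which is controlled by Lemmas~\ref{l:HFsubdomain} and~\ref{l:absorb}. No commutators are needed.
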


\begin{lemma}\label{l:ctsDifference} 
        Given $M>0$ there exists $C>0$ such that for all $\ell$, $\mathscr{A}_\ell    :\fine \to\cZ$  and $\mathscr{B}_\ell:\fine \to\cZ$ $\chiellg\mathscr{A}_\ell:\fine\to \ZcapHspace{p+1}$, $\chiellg\mathscr{B}_\ell:\fine\to \ZcapHspace{p+1}$ and for all $v_h\in \fine $,
    $$
    (Q^\ell-I)(I-\Qcoarse)v_h=\mathscr{A}_\ell v_h+\mathscr{B}_\ell v_h,
    $$
    with 
    \begin{align}\nonumber
&\|\chiellg\mathscr{A}_\ell v_h\|_{\ZcapHspace{p+1}}
\\ \label{e:oldAbound}
& \leq C \rho_\ell \Big( (k\delta_\ell )^{-1}\|\chiellgg(I-\Qcoarse)v_h\|_{\ZcapHspace{1-p}}
+ (\Hc k)^{2p}\sum_{j=1}^N \langle kd(\Omega_j,\Omega_{\ell})\rangle^{-M}\|(I-\Qcoarse)v_h\|_{X(\Omega_j)}
\Big),
\\ 
     &\|\chiellg\mathscr{B}_\ell v_h\|_{\ZcapHspace{p+1}}\leq
      C \langle k\delta_\ell\rangle^{-M}
(H_ck)^{p}\sum_{j=1}^N \langle kd(\Omega_j,\Omega_{\ell})\rangle^{-M}\|(I-\Qcoarse)v_h\|_{X(\Omega_j)}.
\label{e:oldBbound}
       \end{align}
       \end{lemma}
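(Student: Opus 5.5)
The plan is to write $(Q^\ell-I)(I-\Qcoarse)v_h$ as a commutator term plus a remainder, following the first feature listed in the discussion of the proof. Set $e:=(I-\Qcoarse)v_h$. Since $\chiellg P_\ell=\chiellg P$ and $\chiellggg$ equals one on a neighbourhood of $\supp\chiellgg$, extending into $U_\ell$, we have
\[
P_\ell(\chiellggg e)=\chiellggg Pe+[P_\ell,\chiellggg]e .
\]
Hence
\[
Q^\ell e - \chiellggg e = P_\ell^{-1}\big(E_\ell^*Pe-\chiellggg Pe\big)-P_\ell^{-1}[P_\ell,\chiellggg]e .
\]
Applying $\chiellg$, the term $\chiellg(1-\chiellggg)e$ vanishes, because $\chiellg\prec\chiellggg$. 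So
\[
\chiellg(Q^\ell-I)e=\chiellg P_\ell^{-1}(1-\chiellggg)E_\ell^*Pe-\chiellg P_\ell^{-1}[P_\ell,\chiellggg]e .
\]
In both terms the forcing is supported where $\nabla\chiellggg\neq0$ or $\chiellggg\neq1$, which lies in $U_\ell$ at distance $\gtrsim\delta_\ell$ from $\supp\chiellg$.

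I take $\mathscr{B}_\ell$ to be the part carrying the forcing in $U_\ell$. By Lemma~\ref{l:absorb} (the off-diagonal estimate \eqref{e:lotsofs2}) together with interior elliptic regularity on $\supp\chiellg$, $\chiellg P_\ell^{-1}\varphi$ gains a factor $\langle k\delta_\ell\rangle^{-M}$ from $\cZ_\ell^{s-1}$ into $\ZcapHspace{p+1}$. It therefore remains to bound $\varphi Pe$ and $[P_\ell,\chiellggg]e$ in $\cZ_\ell^{-1}$. Here I use Galerkin orthogonality: $Pe\perp\coarse$, so for a test function $w$,
\[
\langle Pe,\varphi w\rangle=\langle Pe,(I-\Pi_{H_c})\varphi w\rangle .
\]
This is bounded by $\|e\|_{H^1_k}(H_ck)^p\|\varphi w\|_{H^{p+1}_k}$ near $\Omega_\ell$, and the $H^{p+1}_k$ norm can be placed on the test side after smoothing via the commutator structure. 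For the $\|e\|_{H^1_k}$ factor I split $e=\Psi e+(I-\Psi)e$: $\|\Psi e\|_{H^1_k}\le\|e\|_X$, and $\|(I-\Psi)e\|_{H^1_k}\le (H_ck)^p\|e\|_X$. Localising $\|e\|$ to neighbouring $\Omega_j$ with weights $\langle kd(\Omega_j,\Omega_\ell)\rangle^{-M}$ uses pseudolocality of $\Psi(\cP)$ (\cite[Theorem 6.2]{AGS2}), which gives the sum in \eqref{e:oldBbound}.

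The remaining term is $\mathscr{A}_\ell$, the commutator with forcing near $\supp\nabla\chiellgg$ inside $W_\ell$, where $P_\ell=P$. I write $\mathscr{A}_\ell=-P_\ell^{-1}[P_\ell,\chiellgg]e$, modulo the pieces already placed in $\mathscr{B}_\ell$ via $\chiellggg$. For its $\Psi e$ part, $[P,\chiellgg]$ is a first-order operator with a $(k\delta_\ell)^{-1}$ prefactor, and $\Psi e\in\ZcapHspace{p+1}$. So $P_\ell^{-1}$, of norm $\rho_\ell$, combined with elliptic regularity and the Lemma~\ref{l:coarse} bound on $\|e\|_{\ZcapHspace{1-p}}$, gives the first term of \eqref{e:oldAbound}. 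For the high-frequency part I use the bound $\|(I-\Psi)e\|$ scaled by $(H_ck)^{2p}$, obtained by the same Galerkin-orthogonality duality as above. The bound in $\ZcapHspace{p+1}$ follows by bootstrapping elliptic regularity on $\supp\chiellg$, with boundary conditions preserved by \eqref{e:domain2domain}.

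The main obstacle is extracting the $(H_ck)^{p}$ or $(H_ck)^{2p}$ gain from orthogonality while keeping only $\langle kd\rangle^{-M}$-localised $X$-norms. I would handle this by first proving a local negative-norm estimate of $Pe$ via the test-function argument, then summing with Lemma~\ref{l:overlapNegative}.
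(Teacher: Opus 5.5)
There is a genuine gap, and it lies in the choice of cutoff you commute with. First, $P_\ell(\chiellggg e)=\chiellggg Pe+[P_\ell,\chiellggg]e$ is not correct. By \eqref{e:lotsofggg}, $\chiellggg$ is nonzero on part of $U_\ell$, where the subdomain absorption lives, so in general $\chiellggg P_\ell\neq\chiellggg P$. (Only $\chiellg$ and $\chiellgg$ are supported in $W_\ell$.) You have therefore dropped $P_\ell^{-1}\chiellggg(P-P_\ell)e$.

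More seriously, once you commute at $\chiellggg$, the forcing in $U_\ell$ contains $[P_\ell,\chiellggg]e$ and $\chiellggg(P-P_\ell)e$. These act on $e$ itself, not on the residual $Pe$, so Galerkin orthogonality says nothing about them. Their low-frequency parts are bounded only by quantities like $(k\delta_\ell)^{-1}\|\Psi(\cP)e\|_{L^2}\leq (k\delta_\ell)^{-1}\|e\|_X$, with no factor $(\Hc k)^p$. Hence your $\mathscr{B}_\ell$ does not satisfy \eqref{e:oldBbound}. The loss is fatal: for $k\delta_\ell\sim 1$ the factor $\langle k\delta_\ell\rangle^{-M}$ is of order one. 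By \eqref{e:firstComp}, the resulting contribution $(\Hc k)^p\|\chiellg\mathscr{B}_\ell v_h\|_{\ZcapHspace{p+1}}$ to $\|(I-\Pi_{\Hc})(Q-I)v_h\|_{\cZ}$ is then only bounded by a constant times $\|(I-\Pi_{\Hc})v_h\|_{\cZ}$, so no contraction results.

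Your $\mathscr{A}_\ell$ is also not well defined. After the $\chiellggg$ splitting nothing is left for it, and the $\chiellgg$ commutator you then introduce belongs to a different decomposition. Likewise, the factor $(\Hc k)^{2p}$ cannot come from a commutator acting on $(I-\Psi(\cP))e$, which yields only $(\Hc k)^{p}$.

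The decomposition that works (the paper's) commutes at $\chiellgg$. Since $\chiellgg$ is supported in $W_\ell$, the identity $P_\ell\chiellgg e=\chiellgg Pe+[P,\chiellgg]e$ holds exactly. Splitting $E_\ell^*Pe=\chiellgggg Pe+(1-\chiellggg)E_\ell^*(1-\chiellgggg)Pe$ then gives
\[
(Q^\ell-I)e=P_\ell^{-1}[\chiellgg,P]e+P_\ell^{-1}(\chiellgggg-\chiellgg)Pe+(1-\chiellgg)e+P_\ell^{-1}(1-\chiellggg)E_\ell^*(1-\chiellgggg)Pe .
\]
\begin{itemize}
\item The first term is the only one acting on $e$ directly, and its forcing lies in $W_\ell$. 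Lemma~\ref{l:ERcomm}, together with $\chiellg P=\chiellg P_\ell$ and $\chiellg[\chiellgg,P]=0$, bounds its $\chiellg$-localisation in $\ZcapHspace{p+1}$ by $C\rho_\ell(k\delta_\ell)^{-1}$ times a localised $\ZcapHspace{1-p}$ norm of $e$. This uses $\|\chiellgg P_\ell^{-1}\|_{\ZcupHspace{-1-p}\to\ZcapHspace{1-p}}\leq C\rho_\ell$. What later makes this term small is the low-regularity estimate \eqref{e:FEMLowbound}, not orthogonality.
\item Every remaining term that reaches into $U_\ell$ is a residual. The second term is bounded via \eqref{e:sneaky2}, which is the true source of $(\Hc k)^{2p}$. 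Testing against $\ZcapHspace{p+1}$ functions gains $(\Hc k)^p$, and a second $(\Hc k)^p$ comes from $P\Psi(\cP)e\in\ZcupHspace{p-1}$, or respectively from the $X$-weight on $(I-\Psi(\cP))e$.
\item The third term is killed by $\chiellg$.
\item The fourth term is bounded by duality, the off-diagonal decay in Lemma~\ref{l:absorb}, and \eqref{e:sneaky1}.
\end{itemize}
Both residual estimates are exactly Lemma~\ref{l:sneaky}, which you can cite.

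Note also that in \eqref{e:sneaky1} the test functions lie only in $H^1$, so the gain cannot be placed on the test side as you suggest. It comes from the smoothness of $\Psi(\cP)e$, via $(\Pi_{\cZ}^c)^*=(\cP+2)\Pi_{\cZ}^c(\cP+2)^{-1}$, and from the $X$-weight on high frequencies. The localisation uses the off-diagonal decay \eqref{e:expDecay1} of $I-(\Pi_{\cZ}^c)^*$, not pseudolocality of $\Psi(\cP)$.
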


A key ingredient of the proofs of both Lemmas \ref{l:ctsPart} and \ref{l:ctsDifference} is the following result.

\begin{lemma}\label{l:sneaky}
Given $M>0$ there exists $C>0$ such that 
\begin{equation}\label{e:sneaky1}
\|P(I-\Qcoarse)v_h\|_{(H^1_{\partial\Omega}(\Omega_\ell))^*}
    \leq C(H_ck)^{p}\sum_{j} \langle d(\Omega_j,\Omega_{\ell})k\rangle^{-M}\|(I-\Qcoarse)v_h\|_{X(\Omega_j)},
\end{equation}
and 
\begin{equation}\label{e:sneaky2}
\|\chiellgggg P(I-\Qcoarse)v_h\|_{\ZcupHspace{-p-1}}
    \leq C(H_ck)^{2p}\sum_{j} \langle d(\Omega_j,\Omega_{\ell})k\rangle^{-M}\|(I-\Qcoarse)v_h\|_{X(\Omega_j)},
\end{equation}

\end{lemma}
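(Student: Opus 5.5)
The plan is to exploit Galerkin orthogonality of $e:=(I-\Qcoarse)v_h$ against $\coarse$. We split $e=\Psi(\cP)e+(I-\Psi(\cP))e$ so that each piece is paired with a test function whose regularity matches the corresponding part of the $X$-norm \eqref{e:Xnorm}.

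\emph{Proof of \eqref{e:sneaky1}.} Fix $w\in H^1_{\partial\Omega}(\Omega_\ell)$. By \eqref{e:galerkinCoarse}, $a(e,w_{\Hc})=0$ for all $w_{\Hc}\in\coarse$, so
\[
\langle Pe,E_\ell w\rangle=\langle Pe,z\rangle,\qquad z:=E_\ell w-w_{\Hc}.
\]
We choose $w_{\Hc}$ to be a \emph{local} coarse quasi-interpolant of $E_\ell w$ of $L^2$-projection type. This is possible because $\partial\Omega\cup\Gamma_p$ is resolved by $\mathcal{T}_c$ by \eqref{e:geometry0}, and $\Hc k\leq c$ together with $\delta_\ell\geq c_0k^{-1}$ means that coarse elements are much smaller than $\delta_\ell$. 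We want $w_{\Hc}$ to satisfy three properties:
\begin{itemize}
\item $\supp z$ lies in an $O(\Hc)$-neighbourhood $\widetilde\Omega_\ell$ of $\Omega_\ell$;
\item $\|z\|_{H^1_k}\leq C\|w\|_{H^1_k}$;
\item the negative-norm estimate $\|z\|_{(H^{p-1}_k(\widetilde\Omega_\ell\setminus\Gamma_p))^*}\leq C(\Hc k)^p\|w\|_{H^1_k}$ holds.
\end{itemize}
The last property is the standard duality property of local $L^2$ projections onto degree-$p$ polynomials.

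For the high-frequency part we then have
\[
|\langle P(I-\Psi(\cP))e,z\rangle|\leq C\|(I-\Psi(\cP))e\|_{H^1_k(\widetilde\Omega_\ell)}\|w\|_{H^1_k}\leq C(\Hc k)^p\|e\|_{X(\widetilde\Omega_\ell)}\|w\|_{H^1_k}.
\]
For the low-frequency part, $P\Psi(\cP)e\in H^{p-1}_k$ piecewise, and $\Psi(\cP)e\in\ZcapHspace{p+1}$ satisfies the transmission conditions, so integration by parts produces no interface terms. Hence
\[
|\langle P\Psi(\cP)e,z\rangle|\leq C\|\Psi(\cP)e\|_{H^{p+1}_k(\widetilde\Omega_\ell\setminus\Gamma_p)}(\Hc k)^p\|w\|_{H^1_k}.
\]
Both right-hand sides involve $e$ near $\Omega_\ell$ only. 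However, $\|\cdot\|_{X(\Omega_j)}$ is defined with $\Psi(\cP)$ applied globally, so we must replace the neighbourhood $\widetilde\Omega_\ell$ by a sum over subdomains. Covering $\widetilde\Omega_\ell$ by the $\Omega_j$ (it lies within distance $\lesssim k^{-1}$ of $\Omega_\ell$) gives the sum in \eqref{e:sneaky1}. If a weight $\langle kd(\Omega_j,\Omega_\ell)\rangle^{-M}$ with genuine decay is required, we would insert cutoffs $\varphi_j$ and invoke spatial pseudolocality of $\Psi(\cP)$ \cite[Theorem 6.2]{AGS2}, which makes $(1-\widetilde\varphi)\Psi(\cP)\varphi$ of size $\langle k\,d\rangle^{-M}$. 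The bounded overlap $\Lambda(\{\Omega_\ell\},k)\leq\Upsilon$ controls the number of $j$ at each distance.

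\emph{Proof of \eqref{e:sneaky2}.} We argue by duality. For $\phi\in\ZcapHspace{p+1}$, write $\langle\chiellgggg Pe,\phi\rangle=\langle Pe,\chiellgggg\phi\rangle$. The function $\chiellgggg\phi$ now lies in $H^{p+1}_k$ with norm at most $C\|\phi\|_{\ZcapHspace{p+1}}$ by the scale assumption on $\chiellgggg$. Subtracting the same local coarse interpolant gives $z$ with $\|z\|_{H^1_k}\leq C(\Hc k)^p\|\phi\|$ and $\|z\|_{(H^{p-1}_k)^*}\leq C(\Hc k)^{2p}\|\phi\|$, which is the usual gain of $p$ from $H^{p+1}$ regularity. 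The same low/high split then gives $(\Hc k)^{2p}\|e\|_X$ for both pieces:
\begin{itemize}
\item high part: $(\Hc k)^p\|e\|_X\cdot(\Hc k)^p$;
\item low part: $\|\Psi(\cP)e\|_{H^{p+1}}\cdot(\Hc k)^{2p}$.
\end{itemize}
The localisation is the same as before.

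\emph{Main obstacle.} I expect the hardest step to be constructing a coarse quasi-interpolant that is simultaneously local, preserves the Dirichlet conditions on $\Gamma_{\rm soft}\cup\Gamma_{\tr}$ and $\partial\Omega_\ell\setminus\partial\Omega$, and has the $(H^{p-1}_k)^*$ negative-norm approximation property on curved elements. Closely tied to this is making the pseudolocality decay of $\Psi(\cP)$ uniform up to $\partial\Omega$ and $\Gamma_p$. For the interpolant, I would first try local $L^2$ projection onto element patches with boundary-node corrections, following \cite[Appendix D]{AGS2}.
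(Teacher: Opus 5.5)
Your route is correct in outline, but it differs genuinely from the paper's. The paper never builds a local interpolant. It writes Galerkin orthogonality as $P(I-\Qcoarse)v_h=(I-(\Pi_{\cZ}^c)^*)P(I-\Qcoarse)v_h$, where $\Pi_{\cZ}^c$ is the \emph{global} $\cZ$-orthogonal projection for $\langle(\cP+2)\cdot,\cdot\rangle$. It then localizes $P$ through $Pu=\sum_j\chi_jP\chi_j^{(1)}\big(\Psi(\cP)u+(I-\Psi(\cP))u\big)$. The negative-norm gains come from an Aubin--Nitsche argument: $(\Pi_{\cZ}^c)^*=(\cP+2)\Pi_{\cZ}^c(\cP+2)^{-1}$ together with elliptic regularity of $\cP+2$ gives $\|I-(\Pi_{\cZ}^c)^*\|_{\ZcupHspace{p-1}\to\ZcupHspace{-s}}\leq C(\Hc k)^{p+s-1}$ and $\|I-(\Pi_{\cZ}^c)^*\|_{\cZ^*\to\ZcupHspace{-s}}\leq C(\Hc k)^{s-1}$, used with $s=1$ for \eqref{e:sneaky1} and $s=p+1$ for \eqref{e:sneaky2}. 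Boundary and transmission conditions therefore come for free. The price is the non-locality of $\Pi_{\cZ}^c$, which is handled by the off-diagonal decay bound from \cite[Theorem 7.2]{AGS2} when $d(\Omega_j,\Omega_\ell)>k^{-1}$; this is exactly where the weights $\langle kd(\Omega_j,\Omega_\ell)\rangle^{-M}$ come from.

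Your local interpolant makes the localization exact. Only indices $j$ with $d(\Omega_j,\Omega_\ell)\lesssim\Hc$ appear, which is stronger than the statement. Those weights are bounded below, and $\|\cdot\|^2_{X(A)}$ is additive in $A$, with $\Psi(\cP)$ applied globally. So you need no pseudolocality of $\Psi(\cP)$ at all. You also do not need a zero trace on $\partial\Omega_\ell\setminus\partial\Omega$: you only need $z\in\cZ$, and $\mathcal{T}_c$ does not resolve $\partial\Omega_\ell$ anyway.

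The real cost of your route is the interpolant, and you overstate how standard its third property is. Local $L^2$ projections onto discontinuous polynomials do not lie in $\coarse$. Conforming Scott--Zhang-type operators only give $\|z\|_{L^2}\leq C\Hc k\|w\|_{H^1_k}$, which suffices only for $p=1$. For $p\geq 2$ you need exact preservation of moments against polynomials of degree $\leq p-2$ on each side of $\Gamma_p$. These are the functions that approximate $P\Psi(\cP)(I-\Qcoarse)v_h$, which jumps across $\Gamma_p$.

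This can be arranged, but it has to be constructed. One way is to add to a Scott--Zhang operator patchwise corrections in $\coarse$ (vanishing on the Dirichlet boundary) that restore these moments. The moment pairing is non-degenerate, with constants uniform by scaling, because $\coarse$ contains hat functions of vertices off $\Gamma_p\cup\partial\Omega$ times (mapped) polynomials of degree $p-1$. A patchwise Bramble--Hilbert argument on each side of $\Gamma_p$ then yields both negative-norm bounds you use: $(\Hc k)^p$ from $H^1_k$ data, and $(\Hc k)^{2p}$ from $H^{p+1}_k$ data in the proof of \eqref{e:sneaky2}. With that construction supplied, your proof goes through.
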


\bpf%[Proof of Lemma \ref{l:sneaky}]
%Let $\Pi_S$ be defined by \eqref{e:PiS}.
 In this proof, we use the fact that $\|u\|_{\cZ}$ is equivalent to $\langle (\cP+2)u,u\rangle^{1/2}$.
% % i.e. with the norm given by 
% % $$
% % \|u\|_{\cZ}^2=\langle (\cP+2)u,u\rangle.
% % $$
 Let $\Pi_{\cZ}^c: \cZ \to \coarse$ denote the $\cZ$ orthogonal projection using this equivalent norm. 
% ; i.e., 
% %Recall that the $L^2$ orthogonal projection, $\Pi_{\cZ}^c: L^2 \to \coarse$ is defined by 
% \beq\label{e:L2projDef}
% \langle (I-\Pi_{\cZ}^c)v, w_{\Hc}\rangle=0 \quad\tfa w_{\Hc} \in \coarse.
% \eeq
% Recall that $\Pi_{\cZ}^c : H^1_0(\Omega)\to \coarse$ by, e.g., \cite[Proposition 22.21]{ErGu:21}.
Thus, by Galerkin orthogonality, for all $v_h\in \fine$,
$$
\big\langle (\Pi_{\cZ}^c)^* P(I-\Qcoarse)v_h , w\big\rangle = 0 \quad\tfa w \in \cZ,
$$
so that 
\beq\label{e:L2projClaim1}
P(I-\Qcoarse)v_h=(I- (\Pi_{\cZ}^c)^*)P(I-\Qcoarse)v_h \quad\tfa v_h\in \fine.
\eeq
To prove \eqref{e:sneaky1}, it is therefore sufficient to prove that 
\beq\label{e:Pi0approx1}
\|(I- (\Pi_{\cZ}^c)^*)Pu\|_{(H^1_{\partial}(\Omega_\ell))^*}\leq
C(H_ck)^{p}\sum_{j} \langle d(\Omega_j,\Omega_{\ell})k\rangle^{-N}\|u\|_{X(\Omega_j)},
\eeq
and to prove \eqref{e:sneaky2} it is sufficient to prove
\beq\label{e:Pi0approx1a}
\|\chiellgggg(I- (\Pi_{\cZ}^c)^*)Pu\|_{\ZcupHspace{-p-1}}\leq
C(H_ck)^{2p}\sum_{j} \langle d(\Omega_j,\Omega_{\ell})k\rangle^{-N}\|u\|_{X(\Omega_j)}.
\eeq
Let $\widetilde{\chi}_\ell\in C^\infty(\Omega)$ with $\supp(1-\widetilde{\chi}_{\ell})\cap \Omega_\ell =\emptyset$, 
$$
\sup\{ d(x,\Omega_\ell)\,:\, x\in \supp\widetilde\chi_\ell\}<\tfrac{1}{2k},
$$
and 
$$
\|\partial^\alpha \widetilde{\chi}_{\ell}\|_{L^\infty}\leq C_{\alpha}k^{|\alpha|}\quad\tfa \alpha.
$$
Since $\supp(1-\widetilde{\chi}_{\ell})\cap \Omega_\ell =\emptyset$,
\beq\label{e:basic1}
\|(I-(\Pi_{\cZ}^c)^*)Pu\|_{(H_{\partial\Omega}^1(\Omega_\ell))^*}\leq\|\widetilde{\chi}_\ell(I-(\Pi_{\cZ}^c)^*)Pu\|_{\cZ^*}
\eeq
and 
\beq\label{e:basic2}
\|\chiellgggg(I-(\Pi_{\cZ}^c)^*)Pu\|_{\ZcupHspace{-1-p}}\leq\|\widetilde{\chi}_\ell(I-(\Pi_{\cZ}^c)^*)Pu\|_{\ZcupHspace{-1-p}} .
\eeq
Furthermore,
\beq\label{e:almostLocal1}
1_{\Omega_\ell}(I-(\Pi_{\cZ}^c)^*)Pu=\sum_{j}1_{\Omega_\ell}\widetilde{\chi}_{\ell}(I-(\Pi_{\cZ}^c)^*)\chi_jP \chi_j^{(1)}  (\Psi u+(I-\Psi)u).
\eeq
Set
$$
\mathcal{F}_{\ell}:=\big\{j\,:\, d(\Omega_{\ell},\Omega_j)>k^{-1}\big\}, \qquad \mathcal{N}_{\ell}:=\big\{j\,:\, d(\Omega_{\ell},\Omega_j)\leq k^{-1}\big\}
$$
Then, by \eqref{e:almostLocal1} and the fact that $P: \ZcapHspace{p+1}\to \ZcupHspace{p-1}$ and $P: \cZ\to \cZ^*$ with bounded norms,
\begin{equation}
    \begin{aligned}[b]
&\|\widetilde{\chi}_\ell(I-(\Pi_{\cZ}^c)^*)Pu\|_{\ZcupHspace{-s}}\\
&\leq \sum_{j\in \mathcal{F}_\ell}\|\widetilde{\chi}_{\ell}(I-(\Pi_{\cZ}^c)^*)\chi_j\|_{\cZ^*\to \ZcupHspace{-s}} \|\chi_j^{(1)}u\|_{\cZ}\\
&\quad +\sum_{j\in \mathcal{N}_\ell}\Big(\|\tilde{\chi}_{\ell
}(I-(\Pi_{\cZ}^c)^*)\chi_j\|_{\ZcupHspace{p-1}\to \ZcupHspace{-s }}\|\chi_j^{(1)}\Psi u\|_{\ZcapHspace{p+1}}+\|\tilde{\chi}_{\ell}(I-(\Pi_{\cZ}^c)^*)\chi_j\|_{\cZ^*\to \ZcupHspace{-s}}\|\chi_j^{(1)}(I-\Psi)u\|_{\cZ}\Big). 
\label{e:JeffLikesThisPaper1}
\end{aligned}
\end{equation}
Notice that since
\begin{equation*}
    \|u\|_{H^1_k}\leq
    \| \Psi(\mathcal{P})u\|_{H_k^{1}}+\|(I-\Psi(\mathcal{P})u\|_{H_k^1}   
    \leq\|u\|_X,
\end{equation*}
we have
\beq\label{e:multchiell2}
\|\chi_j^{(1)}u\|_{\cZ}\leq 
%\big(1 + (k\delta_\ell)^{-1}\big)
C\| u\|_{X(\Omega_j)}.
\eeq
Since $\Pi_{\cZ}^c: \cZ\to \coarse$ is bounded, 
\beq\label{e:Pi0approx2}
\|I- (\Pi_{\cZ}^c)^*\|_{\cZ^*\to \ZcupHspace{-s}}
=
\|I- \Pi_{\cZ}^c\|_{\ZcapHspace{s}\to \cZ}\leq C(H_ck)^{s-1}.
\eeq
We now claim that \beq\label{e:Pi0approx3}
\|I- (\Pi_{\cZ}^c)^*\|_{\ZcupHspace{p-1}\to \ZcupHspace{-s}}\leq C(\Hc k)^{p+s-1},
\eeq
Indeed, $(\Pi_{\cZ}^c)^*=(\cP +2)\Pi_{\cZ}^c (\cP+2)^{-1}$ and hence
\begin{align*}
\|I-(\Pi_{\cZ}^c)^*\|_{\ZcupHspace{p-1}\to \cZ^*}=\|I-\Pi_{\cZ}^c\|_{\ZcapHspace{p+1}\to \cZ}\leq C(H_ck)^{p},
\end{align*}
Also, by, e.g., \cite[Lemma D.15]{AGS2},
for $1\leq s\leq p+1$,
\begin{align*}
\|I-(\Pi_{\cZ}^c)^*\|_{\cZ^*\to \ZcupHspace{-s}}=\|I-\Pi_{\cZ}^c\|_{\ZcapHspace{s}\to \cZ}\leq C(H_ck)^{s-1};
\end{align*}
the combination of these last two inequalities then implies \eqref{e:Pi0approx3}.
% since
% \begin{align*}
% \|I- \Pi_{\cZ}^c\|_{\ZcapHspace{s}\to \cZ}\leq (\Hc k)^s 
%     \end{align*}
    
Furthermore for any $N$ there is $C>0$ such that \beq\label{e:expDecay1}
\|\widetilde{\chi}_\ell(I-(\Pi_{\cZ}^c)^*)\chi_{j}
\|_{\cZ^*\to \ZcupHspace{-s}}
\leq C(d(\Omega_j,\Omega_\ell )k)^{-N}(H_ck)^{p+s-1}\quad \tfor j\in \mathcal{F}_\ell;
\eeq
    %The estimate~\eqref{e:expDecay1}
    this follows from~\cite[Theorem 7.2]{AGS2} or more precisely its proof after letting $d(\supp\chi,\supp \psi)\geq dk^{-1}$ and assuming that $h(\mathcal{T})k\leq c$ and accounting for the interface $\Gamma_p$. 
    %We have sketched in more detail the necessary changes to the proof in Appendix~\ref{a:pseudoLocal}
    
Combining 
\eqref{e:basic1} with 
\eqref{e:JeffLikesThisPaper1},
\eqref{e:multchiell2}, \eqref{e:Pi0approx2}, \eqref{e:Pi0approx3}, and  
\eqref{e:expDecay1}, we obtain that
\begin{align*}
\|(I-(\Pi_{\cZ}^c)^*)Pu\|_{(H^1_{\partial\Omega}(\Omega_\ell))^*}&\leq C\Big(\sum_{j\in\mathcal{F}_{\ell}}
(H_ck)^{p}
(d(\Omega_j,\Omega_{\ell})k)^{-N}\|u\|_{X(\Omega_j)}+\sum_{j\in\mathcal{N}_{\ell}}(H_ck)^{p}\|u\|_{X(\Omega_j)}
\Big)
\\
&\leq C(H_ck)^{p}\sum_{j} \langle d(\Omega_j,\Omega_{\ell})k\rangle^{-N}\|u\|_{X(\Omega_j)},
\end{align*}
i.e., \eqref{e:Pi0approx1}. The proof of the bound \eqref{e:Pi0approx1a} is analogous, starting from \eqref{e:basic2} instead of \eqref{e:basic1}.
\epf

\

       \bpf[Proof of Lemma \ref{l:ctsPart}]
       Observe that 
       \begin{align*}
       Q^\ell (I-Q^0_{H_c})v_h&=    P_\ell^{-1} E_{\ell}^* P  (I-\Qcoarse)v_h\\
       &=   \underbrace{\chiellgggg \Psi(\mathcal{P}_{\ell})P_\ell^{-1} E_{\ell}^* P  (I-\Qcoarse)}_{=:\mathscr{L}_\ell}v_h\\
       &\qquad +\underbrace{\chiellgggg(I-\Psi(\mathcal{P}_\ell))P_\ell^{-1} E_{\ell}^* P  (I-\Qcoarse)+ (1-\chiellgggg)P_\ell^{-1}E_\ell^*P(I-\Qcoarse)}_{=:\mathscr{H}_\ell}v_h.
       \end{align*}
Now, since $\supp \chiellgggg \Subset \Omega_\ell$,
$$\|\chiellgggg\Psi(\mathcal{P}_\ell)\|_{H^1_{\partial\Omega}(\Omega_\ell)\to \ZcapHspace{p+1}\cap H^1_{\partial\Omega}(\Omega_\ell)}\leq C.$$
By Lemma \ref{l:absorb},
       $$\|(1-\chiellgggg)P_\ell^{-1}\|_{(H^1_{\partial\Omega}(\Omega_\ell))^*\to H_{\partial\Omega}^1(\Omega_{\ell})}\leq C,$$ 
       and by Lemma \ref{l:HFsubdomain}
       $$
       \|\chiellgggg(I-\Psi(\cP_\ell))\cP_{\ell}^{-1}\|_{(H^1_{\partial\Omega}(\Omega_\ell))^*\to H_{\partial\Omega}^1(\Omega_{\ell})}\leq C\rho_\ell (k\delta_\ell)^{-1}.
       $$
Thus
       \begin{align*}
       \|\mathscr{L}_{\ell}v_h\|_{\ZcapHspace{p+1}}&\leq C\rho_{\ell} \|P(I-Q^0_{H_c})v_h\|_{(H_{\partial\Omega}^1(\Omega_{\ell}))^*},\\
       \|\mathscr{H}_{\ell}v_h\|_{H_k^{1}}&\leq  C(1+\rho_\ell(k\delta_\ell)^{-1})\|P(I-Q^0_{H_c})v_h\|_{(H_{\partial\Omega}^1(\Omega_{\ell}))^*}.
       \end{align*}
       The bounds \eqref{e:ctsPartBound} then follow from \eqref{e:sneaky1}. 
           \epf

              \

The rest of this subsection proves Lemma \ref{l:ctsDifference}.
By the definition of $Q^\ell$ \eqref{e:Qell}, the fact that $\chiell=\chiell\chiellg$, and the fact that $E_{\ell}$ denotes extension by zero,
\begin{align*}
&(Q^\ell-I) (I-\Qcoarse)v_h \\
&
= \big( P_\ell^{-1} E_{\ell}^* P - I \big) (I-\Qcoarse)v_h\\
&
=  \big( P_\ell^{-1} E_{\ell}^* P - \chiellgg \big) (I-\Qcoarse)v_h+(1-\chiellgg)(I-\Qcoarse)v_h\\
& =  \big( P_\ell^{-1} E_{\ell}^* P - P_\ell^{-1}P_\ell \chiellgg \big) (I-\Qcoarse)v_h
+(1-\chiellgg)(I-\Qcoarse)v_h\\
& 
=  \big( P_\ell^{-1} \chiellgggg P - P_\ell^{-1}P_\ell \chiellgg \big) (I-\Qcoarse)v_h
+  P_\ell^{-1} 
 (1-\chiellggg)E_{\ell}^*
 (1-\chiellgggg) 
P(I-\Qcoarse)v_h\\
&\qquad +(1-\chiellgg)(I-\Qcoarse)v_h
\\
& 
= \underbrace{ P_\ell^{-1} [\chiellgg, P] (I-\Qcoarse)}_{=:\mathscr{A}^1_\ell}v_h
+ \underbrace{ P_\ell^{-1} (\chiellgggg- \chiellgg)  P(I-\Qcoarse)}_{=:\mathscr{A}^2_\ell}v_h+\underbrace{(1-\chiellgg)(I-\Qcoarse)}_{=:\mathscr{A}^3_\ell}v_h
\\
&\qquad\qquad
+ \underbrace{ P_\ell^{-1} 
 (1-\chiellggg)E_{\ell} ^*
 (1-\chiellgggg) 
P(I-\Qcoarse)}_{=: \mathscr{B}_\ell}v_h.
\end{align*}
We then set $\mathscr{A}_\ell:= \mathscr{A}^1_\ell + \mathscr{A}^2_\ell+\mathscr{A}^3_\ell$. After noting $\nu \cdot A \nabla \chiellg=0$ on $\Gamma_{\rm{hard}}$ and $\nu\cdot (A_+(\nabla \chiellg)_+-A_-(\nabla \chiellg)_-)=0$ on $\Gamma_p$ implies  $\chiellg\mathscr{A}_{\ell}v_h,\,\chiellg\mathscr{B}_{\ell}\in \cD$, we have
by Lemma \ref{l:sneaky}, that
to prove Lemma \ref{l:ctsDifference} it is sufficient to prove that 
\begin{equation}\label{e:calA}
\sum_{j=1}^3\|\chiellg\mathscr{A}^j_\ell v_h\|_{\ZcapHspace{p+1} }
%+\|\mathscr{C}\|_{\cD^{p+1}_k } 
\leq C \rho_\ell  
\Big((k\delta_\ell)^{-1}\|\chiellggg(I-\Qcoarse)v_h\|_{\ZcapHspace{1-p}}
+\|\chiellgggg P(I-\Qcoarse)v_h\|_{\ZcupHspace{-p-1}}
\Big).
%(k\delta_\ell)^{-2p}\big(1+(k\delta_\ell)^{-2p}\big)
%(\Hc k)^{2p} \rho \|v_h\|_X
\end{equation}
% %\begin{equation}\label{e:calB}
% %\end{equation}
and
\begin{equation}\label{e:calC}
\|\chiellg\mathscr B_\ell\|_{\ZcapHspace{p+1}}
\leq 
 C \langle k \delta_\ell\rangle^{-\infty}
 %(\Hc k)^p 
\|%(1-\chiellggg) 
P(I-\Qcoarse)v_h\|_{{(H^1_{\partial\Omega}(\Omega_\ell))^*}}
%
%\| v_h\|_X.
  \end{equation}
  Note first that $\chiellg \mathscr{A}_{\ell}^3=0$, so it is enough to consider $j=1,2$. 
Our proof of \eqref{e:calA} uses the following commutator bound.

\begin{lemma}\label{l:commutator}
Suppose that $\chiell$
is as in Theorem \ref{t:main} and $\chiellg$ is as in \S\ref{s:cutoff}.
Given $-p\leq s\leq p$ there exists $C>0$ such that for all $\ell$
\begin{align*}
%\| [P,\chiell]\|_{(H^{s}_k)^* \to (\cZ^{s+1}_k)^*}    
%=
    \| [P,\chiell]\|_{\ZcapHspace{s+1} \to \ZcupHspace{s}}    
    %\| [P,\chiell]\|_{H^j_k \to H^{j-1}_k}
\leq C (k\delta_\ell)^{-1}\big(1+(k\delta_\ell)^{-1}\big).
\end{align*}
\end{lemma}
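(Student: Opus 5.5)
We need to bound $[P,\chi_\ell]:\ZcapHspace{s+1}\to \ZcupHspace{s}$ for $-p\le s\le p$. The plan is to compute the commutator explicitly as a first-order differential operator and then argue separately for $s\ge 0$ (directly) and $s<0$ (by duality).

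\textbf{Step 1: explicit form.} From the form of $P$ in \eqref{e:PML1},
\[
[P,\chi_\ell]u=-k^{-2}\operatorname{div}\big(\tilde A(\nabla\chi_\ell) u\big)-k^{-2}(\nabla\chi_\ell)\cdot\tilde A\nabla u-ik^{-1}(\tilde b\cdot\nabla\chi_\ell)u .
\]
Equivalently, $[P,\chi_\ell]=k^{-1}\big(L_1+L_0\big)$, where
\[
L_1=-2k^{-1}(\tilde A\nabla\chi_\ell)\cdot\nabla, \qquad L_0=-k^{-1}\operatorname{div}(\tilde A\nabla\chi_\ell)-i\,\tilde b\cdot\nabla\chi_\ell .
\]
By \eqref{e:deltaLower}, $\|\partial^\alpha\chi_\ell\|_{L^\infty(\Omega\setminus\Gamma_p)}\le C\delta_\ell^{-|\alpha|}$ for $|\alpha|\le p+1$. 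The coefficients $\tilde A,\tilde b$ are smooth on either side of $\Gamma_p$ with $k$-independent bounds. Each $k^{-1}$-derivative landing on $\chi_\ell$ therefore costs a factor $(k\delta_\ell)^{-1}$. Hence $L_1$ and $L_0$ are semiclassical operators of order $1$ and $0$ with coefficients bounded by $C(k\delta_\ell)^{-1}$, and their $H^{m}_k$-derivatives up to order $p$ are bounded by $C(k\delta_\ell)^{-1}(1+(k\delta_\ell)^{-1})$. Only the factors of $(k\delta_\ell)^{-1}$ matter for the final bound.

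\textbf{Step 2: $0\le s\le p$.} Here $\ZcapHspace{s+1}=\Zspace{s+1}\subset H^{s+1}_k(\Omega\setminus\Gamma_p)$ and $\ZcupHspace{s}=\Hspace{s}$. For $s\ge 1$ the target space $\Hspace{s}$ carries no boundary conditions, so it suffices to bound the piecewise $H^s_k$ norm of $[P,\chi_\ell]u$ via the Leibniz rule. For $s=0$ we get an $L^2$ estimate directly. Intermediate $s$ follow by interpolation. One point needs care when $s\ge 1$: the output must also lie in $H^1_k(\Omega)$, so it must not jump across $\Gamma_p$.
\begin{itemize}
\item The combination $k^{-2}\operatorname{div}(\tilde A\nabla\chi_\ell\, u)+k^{-2}\nabla\chi_\ell\cdot\tilde A\nabla u$ contains the conormal traces.
\item By the transmission condition \eqref{e:domain2domain}, $\nu\cdot\tilde A_\pm\nabla(\chi_\ell)_\pm$ agree across $\Gamma_p$.
\item Since $u\in\Zspace{s+1}$, the conormal derivative of $u$ is also continuous across $\Gamma_p$.
\end{itemize}
Writing the normal components through these continuous conormal quantities and the tangential parts through tangential derivatives of the continuous $u$ and $\chi_\ell$ shows $[P,\chi_\ell]u$ has no jump across $\Gamma_p$. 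The Neumann condition on $\Gamma_{\rm hard}$ plays no role for $\Hspace{s}$.

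\textbf{Step 3: $-p\le s<0$, by duality, and the main obstacle.} Write $s=-t$ with $0<t\le p$. Then $\ZcapHspace{1-t}=\Hspace{1-t}$, and $\ZcupHspace{-t}=\Zspace{-t}=(\Zspace{t})^*$. We have
\[
\langle[P,\chi_\ell]u,v\rangle=-\langle u,[P^*,\bar\chi_\ell]v\rangle ,
\]
understood through the sesquilinear form, so that no boundary terms appear. The key step is to show that $[P^*,\bar\chi_\ell]$ maps $\Zspace{t}$ to $(\Hspace{1-t})^*=\ZcapHspace{t-1}$ with the same bound.
\begin{itemize}
\item For $t\ge 1$ this is the computation of Step 2 applied to $P^*$, which has the same structure.
\item For $0<t<1$ we interpolate between $t=0$ and $t=1$. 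At $t=0$, for $u\in\Hspace{1}$ and $v\in L^2$, the form expression
\[
\langle \tilde A k^{-1}\nabla\chi_\ell\, u,k^{-1}\nabla v\rangle-\langle\tilde A k^{-1}\nabla u,k^{-1}\nabla\chi_\ell\, v\rangle+\dots
\]
must be read after an integration by parts that moves the derivative off $v$.
\end{itemize}
I expect this endpoint bookkeeping to be the main obstacle: keeping track of which boundary and interface conditions each space carries. The conditions \eqref{e:domain2domain} are exactly what ensure that multiplication by $\chi_\ell$ preserves the domain spaces $\Zspace{n}$ and that no stray trace terms appear. Combining the cases gives $\|[P,\chi_\ell]\|_{\ZcapHspace{s+1}\to\ZcupHspace{s}}\le C(k\delta_\ell)^{-1}(1+(k\delta_\ell)^{-1})$.
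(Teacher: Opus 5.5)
Your skeleton matches the paper's proof. You write $[P,\chiell]$ as an explicit first-order operator with coefficients $O((k\delta_\ell)^{-1}+(k\delta_\ell)^{-2})$, estimate it directly for $0\le s\le p$, and get $s<0$ by duality from $[P,\chiell]^*=[\chiell,P^*]$. Your remark that \eqref{e:domain2domain} is what removes stray $\Gamma_{\rm hard}$ and $\Gamma_p$ terms is also correct.

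\textbf{The step you add is false.} In Step 2 you claim that $[P,\chiell]u$ has no jump across $\Gamma_p$; in general it does. Near a point of $\Gamma_p$, take $\tilde b=0$ and $\tilde A=\alpha_\pm I$ with constants $\alpha_+\neq\alpha_-$. Let $N_\chi=\alpha_\pm\partial_\nu(\chiell)_\pm$ and $N_u=\alpha_\pm\partial_\nu u_\pm$ be the continuous conormal derivatives. The one-sided traces of $\nabla\chiell\cdot\tilde A\nabla u$ are then $\alpha_\pm\nabla_T\chiell\cdot\nabla_T u+N_\chi N_u/\alpha_\pm$. Now take $u\in\Zspace{s+1}$ vanishing on $\Gamma_p$ near that point, so the tangential and zeroth-order terms drop out there. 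The jump of $[P,\chiell]u$ is $-2k^{-2}N_\chi N_u(\alpha_+^{-1}-\alpha_-^{-1})$. This is nonzero whenever $N_\chi N_u\neq 0$, and \eqref{e:domain2domain} only makes $N_\chi$ continuous, not zero.

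Rewriting normal components through conormal quantities cannot remove this jump. The leftover coefficients themselves jump: the tangential part of $\tilde A$, $1/(\nu\cdot\tilde A\nu)$, $\operatorname{div}(\tilde A\nabla\chiell)$ and $\tilde b\cdot\nabla\chiell$. So under the literal definition $\Hspace{s}=H^s_k(\Omega\setminus\Gamma_p)\cap H^1_k(\Omega)$, the statement fails for $s\ge 1$, and no argument can supply this continuity. You should simply drop the step. The paper's proof (``taking further derivatives'') only controls piecewise $H^s_k(\Omega\setminus\Gamma_p)$ norms, i.e.\ it reads $\ZcupHspace{s}$, $s\ge 0$, as carrying no continuity or transmission condition. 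That is also the only reading consistent with its use of $P:\ZcapHspace{p+1}\to\ZcupHspace{p-1}$: if only $\tilde n$ jumps, then for smooth $u$ the function $Pu$ jumps by $-(\tilde n_+-\tilde n_-)u$.

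\emph{Smaller points.}
\begin{itemize}
\item The spaces $\ZcapHspace{n},\ZcupHspace{n}$ are defined only for integer $n$, so $s$ is an integer and no interpolation is needed.
\item For $s=-t$ with $1\le t\le p$, the dual statement is $[\chiell,P^*]:\Zspace{t}\to(\ZcapHspace{1-t})^*=\ZcupHspace{t-1}=\Hspace{t-1}$, i.e.\ your Step 2 for $P^*$ with $0\le t-1\le p-1$. You wrote $\ZcapHspace{t-1}$ instead, which would impose boundary conditions on the output.
\item In Step 1, once the factor $k^{-1}$ is pulled out, $L_1$ and $L_0$ have coefficients of size $\delta_\ell^{-1}+k^{-1}\delta_\ell^{-2}$, not $(k\delta_\ell)^{-1}$.
\item The extra powers of $(k\delta_\ell)^{-1}$ produced by further derivatives are absorbed only because $k\delta_\ell\ge c_0$.
\item At $s=p$, the coefficient $\operatorname{div}(\tilde A\nabla\chiell)$ needs $p+2$ derivatives of $\chiell$, one more than $\overline{C^{p,1}}$ gives. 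The paper glosses over the same point, and Lemma~\ref{l:ERcomm} only uses $s\le p-1$ for $\chiell$.
\end{itemize}
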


\bpf
By the assumed smoothness of $\chi_\ell$, $[P,\chi_\ell]u$ agrees with a differential operator
when $u\in \cZ^{s+1}_k$ for $0\leq s\leq p$. Then, by direct calculation and the bounds \eqref{e:chiBound} on $\chi_\ell$,
\begin{align*}
  | [P,\chiell]u |
    & \leq 
Ck^{-1}\Big(\|\nabla\chiell\|_{L^\infty} | k^{-1}\nabla u|
+\big(\|\nabla\chiell\|_{L^\infty}
+k^{-1}\|\Delta\chiell\|_{L^\infty}\big)
|u|
\Big)\\
& \leq C \Big( (k\delta_\ell)^{-1}|k^{-1}\nabla u|
+ \big( (k\delta_\ell)^{-1} + (k\delta_\ell)^{-2}\big)|u|\Big)
\end{align*}
which proves the result for $s=0$. Taking further derivatives proves the result for $1\leq s\leq p$. The result for $-p\leq s\leq 0$ follows by duality, using that $[P,\chiell]^*= [\chiell, P^*]$
\epf

\begin{lemma}\label{l:ERcomm}
Suppose that $\chiell$
is as in Theorem \ref{t:main} and $\chiellg$ is as in \S\ref{s:cutoff}. Suppose further that $\nu \cdot A\nabla \chiell=0$ on $\Gamma_{\rm hard}$ and $\nu \cdot (A_+\nabla(\chi_\ell)_+-A_-\nabla (\chi_\ell)_-)=0$ on $\Gamma_p$. 
Given $c_0>0$ there is $C>0$ such that if $k\delta_\ell \geq c_0$ 
then 
    \begin{align*}
\| \chiell  u \|_{\ZcapHspace{p+1}} \leq C\Big(\| \chiell P u \|_{\ZcupHspace{p-1}} + 
%(k\delta_\ell)^{-N}
%\big(1+(k\delta_\ell)^{-N}\big)
\| \chiellg u\|_{\ZcapHspace{1-p}}
\Big).       
    \end{align*}
\end{lemma}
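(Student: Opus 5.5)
We prove this interior elliptic-regularity estimate in the standard way: localise and use the strong ellipticity of $P$ \eqref{e:strongElliptic} together with the transmission/boundary conditions encoded in $\ZcapHspace{p+1}$, with everything done at scale $\delta_\ell$. First we rescale $y=\delta_\ell^{-1}x$ as in the proof of Lemma~\ref{l:absorb}. This turns $P$ into a semiclassical operator with parameter $\tilde k=k\delta_\ell\geq c_0$ whose coefficients have all derivatives bounded uniformly. It also turns $\chi_\ell,\chiellg$ into cutoffs of scale $\sim 1$ with $\chi_\ell\prec\chiellg$, and turns $\Omega_\ell$ into a Lipschitz domain of scale $\sim1$. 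The $H^s_k$ norms change only by the harmless factor $\delta_\ell^{d/2}$, so it suffices to prove the estimate for the rescaled problem, with constants uniform in $\tilde k\geq c_0$ and in $\ell$.

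Next we take a chain of intermediate cutoffs $\chi_\ell=\psi_0\prec\psi_1\prec\dots\prec\psi_{2p}\prec\chiellg$. Each has scale $\gtrsim\delta_\ell$ and satisfies the same conditions $\nu\cdot A\nabla\psi_j=0$ on $\Gamma_{\rm hard}$ and the transmission condition on $\Gamma_p$; these exist because we may take them constant in the normal direction near $\Gamma_{\rm hard}\cup\Gamma_p$, in suitable coordinates. These conditions guarantee that $\psi_j u\in\ZcapHspace{m}$ whenever $u$ is, so $\psi_j$ maps the domain of $P$ to itself.

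The basic step is the elliptic estimate for the global problem restricted near $\supp\chiellg$:
\[
\|w\|_{\ZcapHspace{m+1}}\leq C\big(\|Pw\|_{\ZcupHspace{m-1}}+\|w\|_{\ZcapHspace{m-1}}\big),\qquad w \text{ supported in }\supp\chiellg .
\]
Here $-p\leq m\leq p$, and the lower-order term is $L^2$ or weaker. For $m\geq 1$ this is standard piecewise $H^{m+1}$ regularity for strongly elliptic transmission problems with smooth interface $\Gamma_p$ and smooth boundary $\partial\Omega_-$; note that $\partial\Omega_\ell\setminus\partial\Omega$ is not seen, since $\supp\chiellg$ stays away from it. For negative $m$ it follows by duality with $P^*$. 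We apply this with $w=\psi_j u$ and write $P\psi_j u=\psi_j Pu+[P,\psi_j]\psi_{j+1}u$. Lemma~\ref{l:commutator}, applied to $\psi_j$ in place of $\chi_\ell$, gives $\|[P,\psi_j]\|_{\ZcapHspace{m}\to\ZcupHspace{m-1}}\leq C\tilde k^{-1}(1+\tilde k^{-1})\leq C$, using $k\delta_\ell\geq c_0$. Hence
\[
\|\psi_j u\|_{\ZcapHspace{m+1}}\leq C\big(\|\psi_jPu\|_{\ZcupHspace{m-1}}+\|\psi_{j+1}u\|_{\ZcapHspace{m}}\big).
\]
For $\|\psi_jPu\|$ we want it bounded by $\|\chi_\ell Pu\|_{\ZcupHspace{p-1}}$. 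The cleaner route is to run the induction with $\psi_j$ inside $\chi_\ell$'s region of equality. So we reverse the chain, taking $\psi_j\prec\chi_\ell$ for the inner cutoffs where $\chi_\ell\equiv1$, and finish the last step using the region where $\chi_\ell$ transitions.

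Alternatively, and this is what we would try first, we write $P(\chi_\ell u)=\chi_\ell Pu+[P,\chi_\ell]\chiellg u$ once. The first term gives $\|\chi_\ell Pu\|_{\ZcupHspace{p-1}}$ directly. The commutator is a first-order operator supported on $\supp\nabla\chi_\ell$, so its contribution is $\|\chiellg u\|_{\ZcapHspace{p}}$ restricted to that region. We then bootstrap $\chiellg u$ from $\ZcapHspace{1-p}$ up to $\ZcapHspace{p}$ in $2p-1$ steps through the nested cutoffs, gaining one derivative per step.

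The main obstacle is this bootstrap. In each intermediate step the forcing $\psi_jPu$ is not directly controlled by $\chi_\ell Pu$ on the transition region of $\chi_\ell$. To handle it, we would write $\psi_jPu=\psi_j\chi_\ell Pu+\psi_j(1-\chi_\ell)Pu$. The second piece is controlled in lower norms by $\|\chiellg u\|$ via boundedness of $P:\ZcapHspace{m+1}\to\ZcupHspace{m-1}$ (losing two derivatives, which the remaining weak-norm budget $\ZcapHspace{1-p}$ has to absorb). If this does not close, we would instead use that the estimate only needs $\chi_\ell u$ on the left, and choose all intermediate cutoffs $\psi_j$ with $\chi_\ell\prec\psi_j$ but supported where $P_\ell=P$, accepting $\|\psi_jPu\|$ terms that are then bounded using the structure $\chi_\ell=\chi_\ell\chiellg$. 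Whichever variant works, the remaining checks are uniformity of the constants in $\ell$ (from the scale assumption \eqref{e:chiBound}) and the duality step for negative Sobolev indices near $\Gamma_p$.
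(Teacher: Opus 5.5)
Your outline matches the paper's argument. You write $P(\chiell u)=\chiell Pu+[P,\chiell]\chiellg u$, invert $P+C$ by elliptic regularity (the normal-derivative conditions keep the cutoffs compatible with the boundary and transmission conditions), bound the commutator by Lemma~\ref{l:commutator} using $k\delta_\ell\geq c_0$, and iterate through cutoffs between $\chiell$ and $\chiellg$.

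The gap is exactly where you stop. Every step after the first produces a term $\|\psi_jPu\|_{\ZcupHspace{m-1}}$ with $\chiell\prec\psi_j$, and none of your remedies turns this into $\|\chiell Pu\|_{\ZcupHspace{p-1}}$:
\begin{itemize}
\item Cutoffs $\psi_j\prec\chiell$ say nothing on $\supp\nabla\chiell$, which is where $[P,\chiell]u$ lives.
\item Estimating $\psi_j(1-\chiell)Pu$ in $\ZcupHspace{m-1}$ via $P:\ZcapHspace{m+1}\to\ZcupHspace{m-1}$ needs $u\in\ZcapHspace{m+1}$ on $\supp\psi_j$. That is the regularity the step is supposed to produce, so the argument is circular. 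A two-derivative loss cannot be absorbed by $\|\chiellg u\|_{\ZcapHspace{1-p}}$.
\item The identity $\chiell=\chiell\chiellg$ gives $\|\chiell Pu\|\lesssim\|\chiellg Pu\|$, which is the wrong direction.
\end{itemize}

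No variant can work, because for $p\geq 2$ the inequality with $\|\chiell Pu\|_{\ZcupHspace{p-1}}$ on the right is false in the stated generality. Here is a counterexample.
\begin{itemize}
\item Work near an interior point where $P=-k^{-2}\Delta-1$, with $k$ fixed.
\item Let $\chiell=s_+^{p+1}$ there, with $s:=a-x_1$. This is admissible since $\chiell$ is only assumed to be $\overline{C^{p,1}}$.
\item Set $\mu^2=\lambda^2+k^2$. Take $u=e^{-\lambda s}e^{i\mu x_2}\phi(x')$ on $\{s>0\}$, where $x'=(x_2,\dots,x_d)$ and $\phi\in C^\infty_c$. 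Multiply by a fixed cutoff in $x_1$, and cut off smoothly in $\{s<0\}$ on the scale $\lambda^{-1}$.
\end{itemize}
On $\{s>0\}$ one has $Pu=-k^{-2}e^{-\lambda s}e^{i\mu x_2}(2i\mu\partial_{x_2}\phi+\Delta_{x'}\phi)$. Moreover $\|\chiell u\|_{L^2}\sim\lambda^{-p-3/2}$ and $\|u\|_{L^2}\sim\lambda^{-1/2}$. All Fourier transforms concentrate where $|\xi_2-\mu|\lesssim 1$, and each derivative costs at most a factor $\lambda$. Hence, with constants depending on the fixed $k$,
\[
\|\chiell u\|_{\ZcapHspace{p+1}}\gtrsim\lambda^{-1/2},\qquad
\|\chiell Pu\|_{\ZcupHspace{p-1}}\lesssim\lambda^{-3/2},\qquad
\|\chiellg u\|_{\ZcapHspace{1-p}}\lesssim\lambda^{1/2-p},
\]
and the ratio of left to right sides blows up as $\lambda\to\infty$.

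The paper's closing sentence (``iterating, using a sequence of cutoff functions between $\chiell$ and $\chiellg$'') generates the same terms $\|\psi_jPu\|$. So what it really proves is the estimate with $\|\chiellg Pu\|_{\ZcupHspace{p-1}}$ in place of $\|\chiell Pu\|_{\ZcupHspace{p-1}}$. That is also the form actually used for $\chiellg\mathscr{A}^1_\ell$, whose right-hand side is $\|\chiellg PP_\ell^{-1}[\chiellgg,P](I-\Qcoarse)v_h\|_{\ZcupHspace{p-1}}$.

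For that version your first chain closes immediately. Since $\psi_j=\psi_j\chiellg$ and $m\leq p$,
\[
\|\psi_jPu\|_{\ZcupHspace{m-1}}\leq C\|\psi_jPu\|_{\ZcupHspace{p-1}}\leq C\|\chiellg Pu\|_{\ZcupHspace{p-1}}.
\]
You should prove that version rather than look for a variant that recovers $\chiell Pu$.
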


\begin{proof}
Since the coefficients of $P$ are smooth, elliptic regularity implies that there exists $C>0$ such that 
$(P+C)^{-1}=O(1)_{\ZcupHspace{s-2} \to \ZcapHspace{s}}$ (see, e.g., \cite[Theorems 4.18 and 4.20]{Mc:00}).
    Since $\nu \cdot A\nabla\chiell=0$ on $\Gamma_{\rm hard}$ and $\nu \cdot (A_+\nabla(\chi_\ell)_+-A_-\nabla (\chi_\ell)_-)=0$ on $\Gamma_p$ and $u \in \cD$, we have $\chiell u\in \cD$ and hence
    $$
    (P+C) \chiell u = \chiell (P+C) u + [P,\chiell]u,
    $$
$$
\chiell u = (P+C)^{-1} \chiell (P+C) u + (P+C)^{-1} [P,\chiell]u.
$$
By the mapping property of $(P+C)^{-1}$,
the fact that $[P,\chiell] = [P,\chiell]\chiellg$, 
and Lemma \ref{l:commutator}, 
\begin{align*}
    \| \chiell u\|_{\ZcapHspace{m}} 
    &\leq C\Big(
\| \chiell (P+C)u \|_{\ZcupHspace{m-2}} + \| [P,\chiell]u \|_{\ZcupHspace{m-2}}
    \Big)\\
    & \leq C\Big(
\| \chiell (P+C)u \|_{\ZcupHspace{m-2}} + (k\delta_\ell)^{-1}\big(1+(k\delta_\ell)^{-1}\big)\| \chiellg u \|_{\ZcapHspace{m-1}}\Big)\\
& \leq C\Big(
\| \chiell Pu \|_{\ZcupHspace{m-2}} + \big(1+(k\delta_\ell)^{-1}\big)\big(1+(k\delta_\ell)^{-1}\big)\| \chiellg u \|_{\ZcapHspace{m-1}}
    \Big).
\end{align*}
% Similarly, for $m\geq 2$, 
% \begin{align*}
%     \| \chiell u\|_{(H^{m-2}_k)^*} 
% %     &\leq C\Big(
% % \| \chiell (P+C)u \|_{(\mathcal{Z}^{m}_k)^*} + \| [P,\chiell]u \|_{(\mathcal{Z}^{m}_k)^*}
% %     \Big)\\
% %     & \leq C\Big(
% % \| \chiell (P+C)u \|_{(\mathcal{Z}^{m}_k)^*} + (k\delta_\ell)^{-1}\big(1+(k\delta_\ell)^{-1}\big)\| \chiellg u \|_{(H^{m-1}_k)^*}\Big)\\
%  & \leq C\Big(
% \| \chiell Pu \|_{(\mathcal{Z}^m_k)^*} + \big(1+(k\delta_\ell)^{-1}\big)\big(1+(k\delta_\ell)^{-1}\big)\| \chiellg u \|_{(H^{m-1}_k)^*}
%     \Big).
% \end{align*}
The result then follows by iterating, using a sequence of cutoff functions ``between" $\chiell$ and $\chiellg$.  
%Need $\chi_\ell u$ in the domain ($u$ in domain, need normal derivative of $\chi_\ell$ equal zero on obstacle).
% Need $[P,\chiell]: H^{-s} \to (Z^{s+1})^*$. Since domain of adjoint is domain of operator, equivalent to $[P,\chiell]: Z^{s+1}\to H^s$ --ok.
\end{proof}

\

Lemma \ref{l:ERcomm} applied with $u= P_\ell^{-1} [\chiellgg, P](I-\Qcoarse)v_h$ implies that 
\begin{align*}\nonumber
\|\chiellg\mathscr{A}^{1} v_h\|_{\ZcapHspace{p+1}}&=
\| \chiell P_\ell^{-1} [\chiellgg, P](I-\Qcoarse)v_h\|_{\ZcapHspace{p+1}} \\
&
\leq C\Big(
\| \chiellg P P_\ell^{-1} [\chiellgg, P](I-\Qcoarse)v_h\|_{\ZcupHspace{p-1}}+\| \chiellg P_\ell^{-1} [\chiellgg, P](I-\Qcoarse)v_h\|_{\ZcapHspace{1-p}}\Big).
\end{align*}
Now $\chiellg P = \chiellg P_\ell$, $\chiellg [\chiellg,P]=0$, 
and $[\chiellgg,P]= [\chiellgg,P]\chiellggg$, 
so that
\begin{align*}
\|\chiellg\mathscr{A}^1 v_h\|_{\ZcapHspace{p+1}}
%\| \chiell P_\ell^{-1} [\chiellg, P](I-\Qcoarse)v_h\|_{H^N_k}
&\leq C
\| \chiellgg P_\ell^{-1} [\chiellgg, P](I-\Qcoarse)v_h\|_{\ZcapHspace{1-p}}\\
&\leq C
\| \chiellgg P_\ell^{-1}\|_{\ZcupHspace{-1-p}\to \ZcapHspace{1-p}}\| [\chiellgg, P]\|_{\ZcapHspace{1-p}\to \ZcupHspace{-1-p}}\|\chiellggg(I-\Qcoarse)v_h\|_{\ZcapHspace{1-p}}\\
&\leq C\rho_{\ell} (k\delta_\ell)^{-1}
\|\chiellggg(I-\Qcoarse)v_h\|_{\ZcapHspace{1-p}}.
\end{align*}
Similarly, by Lemma \ref{l:ERcomm} 
(applied with $u= P_\ell^{-1} (\chiellgggg- \chiellgg)  P(I-\Qcoarse)v_h$) and the facts that $\chi_\ell P = \chi_\ell P_\ell$ and that $\chi_\ell(\chiellggg-\chiellg)=0$,
\begin{align*}
 \|\chiellg\mathscr A^{2}v_h\|_{\ZcapHspace{p+1}}
&=\|\chiellg P_\ell^{-1} (\chiellgggg- \chiellgg)  P(I-\Qcoarse)v_h\|_{\ZcapHspace{p+1}} \\
&\leq 
C
\|\chiellgg P_\ell^{-1} (\chiellgggg- \chiellgg)  P(I-\Qcoarse)v_h\|_{\ZcapHspace{1-p}}\\
&=C
\|\chiellgg P_\ell^{-1} (1- \chiellgg) \chiellgggg P(I-\Qcoarse)v_h\|_{\ZcapHspace{1-p}}.
\end{align*}
Since $\chiellg P_\ell^{-1}(\chiellgggg- \chiellgg) : \ZcupHspace{-p-1}\to \ZcapHspace{p+1}$, 
 \begin{align*}
 \|\chiellg \mathscr A^2 v_h\|_{\ZcapHspace{p+1}}
&\leq 
C
\rho_\ell \|\chiellgggg P(I-\Qcoarse)v_h\|_{\ZcupHspace{-1-p}},
%\leq 
%C
%\rho_\ell (\Hc k)^{2p} \rho \|v_h\|_{X},
\end{align*}
which completes the proof of the bound \eqref{e:calA}.

We now prove \eqref{e:calC}. By definition
\begin{align*}
\|\chiellg\mathscr{B}_\ell\|_{\ZcapHspace{p+1}}
&=\|
\chiellg P_\ell^{-1} 
 (1-\chiellggg)E_\ell^*
 (1-\chiellgggg) 
P(I-\Qcoarse)v_h\|_{\ZcapHspace{p+1}}
\\
&\leq \|
\chiellg P_\ell^{-1} 
 (1-\chiellggg)E_\ell^*
\|_{(H^1_{\partial\Omega}(\Omega_\ell))^*\to \ZcapHspace{p+1}}
\|(1-\chiellgggg)P(I-\Qcoarse)v_h\|_{(H^1_{\partial\Omega}(\Omega_\ell))^*}
\\
&
=\|
 %(1-\chiellggg)
 E_{\ell}
 (1-\chiellggg)
 (P_\ell^*)^{-1} \chiellg
 \|_{\ZcupHspace{-p-1}\to H^1_{\partial\Omega}(\Omega_\ell)}
\|(1-\chiellgggg)P(I-\Qcoarse)v_h\|_{(H^1_{\partial\Omega}(\Omega_\ell))^*}.
\end{align*}
Since $(P_\ell^*)^{-1} \chiellg v\in H^1_{\partial\Omega}(\Omega_\ell)$, so is
$(1-\chiellggg)(P_\ell^*)^{-1} \chiellg v$. Thus 
$$
E_{\ell} 
 (1-\chiellggg)
 (P_\ell^*)^{-1} \chiellg  \in H^1_{\partial\Omega}(\Omega_\ell)
 $$
 with
 $$\|E_{\ell} 
 (1-\chiellggg)
 (P_\ell^*)^{-1}\chiellg \|_{\ZcupHspace{}{-p-1}\to H^1_k(\Omega_\ell)}
 =
 \|
 (1-\chiellggg)
 (P_\ell^*)^{-1} \chiellg \|_{\ZcupHspace{-p-1}\to H^1_k(\Omega_\ell)}.
 $$
Thus, 
\begin{align*}
\|\chiellg\mathscr{B}_\ell\|_{\ZcapHspace{p+1}}
&\leq
\|(1-\chiellggg)
 (P_\ell^*)^{-1} \chiellg \|_{\ZcupHspace{-p-1}\to H^1_{\partial\Omega}(\Omega_\ell)}
 \|P(I-\Qcoarse)v_h\|_{(H^1_{\partial\Omega}(\Omega_\ell))^*}
\\
&=
\|
\chiellg P_\ell^{-1} 
 (1-\chiellggg)\|_{(H^1_{\partial\Omega}(\Omega_\ell))^*\to \ZcapHspace{p+1}}
\|P(I-\Qcoarse)v_h\|_{(H^1_{\partial\Omega}(\Omega_\ell))^*}.
\end{align*}
Since $\supp(1-\chiellggg)$ is contained in the absorbing layer of $\Omega_\ell$ (by \eqref{e:lotsofggg}), by Lemma~\ref{l:absorb},
%Now prove bound  on $\mathcal C$ in %\eqref{e:calA}.
$$
\|\chiell P_\ell^{-1} 
 (1-\chiellggg)\|_{(H^1_{\partial\Omega}(\Omega_\ell))^*\to \ZcapHspace{p+1}} \leq C\| \chiellgg P_{\ell}^{-1} (1-\chiellggg)\|_{(H_{\partial\Omega}^1)^*\to \ZcapHspace{1-p}}\leq C \langle k \delta_\ell\rangle^{-\infty},
$$
and the bound \eqref{e:calC} then follows. We have therefore proved \eqref{e:calA} and \eqref{e:calC} and the proof of Lemma \ref{l:ctsDifference} is complete.

\subsection{End of the proof of Theorem \ref{t:main}}
Recall from \eqref{e:start1} that 
\begin{align*}
(Q-I)v_h&= \sum_{\ell=1}^N\cR_\ell^*\Big[
\big(\chiell \big(Q^\ell- \cR_\ell\big)(I-\Qcoarse)v_h\big)
-\big(\chiell \big(Q^\ell- Q^\ell_h\big)(I-\Qcoarse)v_h\big)\\
&\hspace{2cm}
-(I-\cI_\ell)\big(\chiell \big(Q^\ell_h- \cR_\ell\big)(I-\Qcoarse)v_h\big)-(I-\cI_\ell)\chiell\cR_\ell(I-\Qcoarse)v_h\Big].
\end{align*}
By Lemma \ref{l:ctsDifference},
\begin{align*}
&(Q-I)v_h= \sum_{\ell=1}^N\cR_\ell^*\Big[
\chiell\mathscr{A}_\ell + \chiell\mathscr{B}_\ell + \chiell\mathscr{C}_\ell + \mathscr{D}_\ell + \mathscr{E}_\ell
\Big]v_h
\end{align*}
where 
\beqs
\mathscr{C}_\ell:=- \big(Q^\ell- Q^\ell_h\big)(I-\Qcoarse),
\eeqs
\begin{align*}
    \mathscr{D}_\ell:= 
-(I-\cI_\ell)\chiell \big(Q^\ell_h- \cR_\ell\big)(I-\Qcoarse)
=-(I-\cI_\ell)\chiell\big(\mathscr{A}_\ell + \mathscr{B}_\ell+\mathscr{C}_{\ell}\big),
\end{align*}
and
\begin{align*}
\mathscr{E}_\ell:=
-(I-\cI_\ell)\chiell\cR_\ell(I-\Qcoarse).
\end{align*}

To prove~\eqref{e:theGoal}, we apply the triangle inequality and Lemma~\ref{l:overlapH} to obtain
\begin{align}
\|(I-\Pi_{H_c})(Q-I)v_h\|_{\cZ}^2 \leq C\Big( 
\sum_{\mathscr{F} \in \{\mathscr{A},\mathscr{B}\}}
(H_ck)^{2p}\sum_{\ell=1}^N \big\|\mathscr{F}_\ell v_h\big\|_{\ZcapHspace{p+1}}^2
+ \sum_{\mathscr{F}\in \{\mathscr{C},\mathscr{D},\mathscr{E}\}}
\sum_{\ell=1}^N\big\|\mathscr{F}_\ell  v_h\big\|_{\cZ}^2\Big),\label{e:IMinusPi}
\end{align}
and
\begin{align}
\label{e:noIMinusPi}
\|(Q-I)v_h\|_{\cZ}^2 \leq C\Big( 
\sum_{\mathscr{F} \in \{\mathscr{A},\mathscr{B}\}}
\sum_{\ell=1}^N \big\|\mathscr{F}_\ell v_h\big\|_{\ZcapHspace{p+1}}^2
+\sum_{\mathscr{F}\in \{\mathscr{C},\mathscr{D},\mathscr{E}\}}
\sum_{\ell=1}^N\big\|\mathscr{F}_\ell  v_h\big\|_{\cZ}^2\Big),
\end{align}
where, importantly, $C$ does not depend on $N$.

We first bound the terms involving $\mathscr{D}_\ell$ in terms of $\mathscr{A}_\ell$ and $\mathscr{B}_\ell$ and $\mathscr{C}_\ell$:~by Lemma~\ref{l:splitSuper} with $w=\mathscr{A}_\ell v_h+\mathscr{B}_\ell v_h$, , 
\beq\label{e:mathscrDbound}
\begin{aligned}[b]
\sum_{\ell=1}^N\|
\mathscr{D}_\ell v_h \|_{\cZ}^2
&\leq C (hk)^{2p} \sum_{\ell=1}^N \|
\chiellg(\mathscr{A}_\ell + \mathscr{B}_\ell)v_h\|_{\ZcapHspace{p+1}}^2+C\sum_{\ell=1}^N\mu^2_\ell \|\chiellg\mathscr{C}_{\ell}\|_{\cZ}^2\\
&\leq 
C  \bigg((hk)^{2p}\sum_{\ell=1}^N \|
\mathscr{A}_\ell v_h\|_{\ZcapHspace{p+1}}^2 
+(hk)^{2p}\sum_{\ell=1}^N
\|\mathscr{B}_\ell v_h\|_{\ZcapHspace{p+1}}^2+\sum_{\ell=1}^N\mu^2_\ell \|\mathscr{C}_{\ell}\|_{\cZ}^2\bigg).
\end{aligned}
\eeq
For $\chiellg\mathscr{A}_\ell$, by \eqref{e:oldAbound}, 
the Cauchy--Schwarz inequality, 
Lemmas \ref{l:overlap} and  \ref{l:overlapNegative}, and \eqref{e:FEMLowbound}, if $\rho(\Hc k)^{2p}\leq c$ then, since $\Lambda(\{\Omega_\ell\},k)\leq \Upsilon$,
\beq
\begin{aligned}[b]
&\sum_{\ell=1}^N\|\chiellg\mathscr{A}_\ell v_h\|_{\ZcapHspace{p+1}}^2\\
&\leq 
\sum_{\ell=1}^N\Bigg(
C\rho_\ell^2 (k\delta_{\ell})^{-2}
\|\chiellggg(I-\Qcoarse)v_h\|_{\ZcapHspace{1-p}}^2 +C\rho_{\ell}^2(H_ck)^{4p}\Big(\sum_{j=1}^N\langle kd(\Omega_j,\Omega_\ell)\rangle^{-M}\|(I-Q^0_{H_c})v_h\|_{X(\Omega_j)}\Big)^2\Bigg)
\\
&\leq 
C\sum_{\ell=1}^N\Bigg(
\rho_\ell^2 (k\delta_{\ell})^{-2}
\|\chiellggg(I-\Qcoarse)v_h\|_{\ZcapHspace{1-p}}^2 \\
&\qquad +C\rho_{\ell}^2(H_ck)^{4p}\Big(\sum_{j=1}^N\langle kd(\Omega_j,\Omega_\ell)\rangle^{-M}\Big)\Big(\sum_{j=1}^N\langle kd(\Omega_j,\Omega_\ell)\rangle^{-M}\|(I-Q^0_{H_c})v_h\|^2_{X(\Omega_j)}\Big)\Bigg)\\
&\leq 
\sum_{\ell=1}^N\Bigg(
C\rho_\ell^2 (k\delta_{\ell})^{-2}
\|\chiellggg(I-\Qcoarse)v_h\|_{\ZcapHspace{1-p}}^2  +C\rho_{\ell}^2(H_ck)^{4p}\sum_{j=1}^N\langle kd(\Omega_j,\Omega_\ell)\rangle^{-M}\|(I-Q^0_{H_c})v_h\|^2_{X(\Omega_j)}\Bigg)\\
&\leq 
C\Big(\max_\ell \big(\rho_\ell^2 (k\delta_\ell)^{-2}\big)\|(I-Q^0_{H_c})v_h\|^2_{\ZcapHspace{1-p}}+\max_\ell \rho_\ell^2 (H_ck)^{4p}\|(I-Q^0_{H_c})v_h\|^2_{X}\Big)\\
&\leq 
C\Big(\max_\ell \big(\rho_\ell^2 (k\delta_\ell)^{-2}\big)(H_ck)^{2p}\rho^2\|(I-\Pi_{H_c})v_h\|^2_{\cZ}+\max_\ell \rho_\ell^2 (H_ck)^{2p}\|(I-\Pi_{H_c})v_h\|^2_{\cZ}\Big)
%&\leq 
%C\Big(\max_\ell \big(\rho_\ell^2 (k\delta_\ell)^{-2}\big)(\rho(H_ck)^{2p})^2+\max_\ell \rho_\ell^2 (H_ck)^{4p}\Big)\|v_h\|^2_{X}.
\label{e:mathscrAbound}
\end{aligned}
\eeq
For $\chiellg\mathscr{B}_\ell$, by \eqref{e:oldBbound}, 
\eqref{e:overlap1}, 
and \eqref{e:firstComp}, if $\rho (\Hc k)^{2p}\leq c$ then, again using that $\Lambda(\{\Omega_\ell\},k)\leq \Upsilon$,
\beq
\begin{aligned}[b]
    \sum_{\ell=1}^N\|\chiellg\mathscr{B}_\ell 
    v_h
    \|_{\ZcapHspace{p+1}}^2
&\leq
  C \max_\ell \langle k \delta_\ell\rangle^{-M}
(H_ck)^{2p}
\sum_{\ell=1}^N
\sum_{j=1}^N \langle kd(\Omega_j,\Omega_{\ell})\rangle^{-M}\|(I-\Qcoarse)v_h\|^2_{X(\Omega_j)}\\
&\leq
  C \max_\ell \langle k \delta_\ell\rangle^{-M}
(H_ck)^{2p}
\sum_{j=1}^N \|(I-\Qcoarse)v_h\|^2_{X(\Omega_j)}\\
&\leq
  C  \max_\ell \langle k \delta_\ell\rangle^{-M}
(H_ck)^{2p}
 \|(I-Q_{H_c}^0)v_h\|^2_{X}\leq 
  C\max_\ell \langle k \delta_\ell\rangle^{-M}
 \|(I-\Pi_{H_c})v_h\|^2_{\cZ}.
 \end{aligned}
\label{e:mathscrBbound}
\eeq
For $\mathscr{E}_\ell$, by \eqref{e:superapprox}, 
\eqref{e:overlap1}, and \eqref{e:firstComp},  if 
$(\Hc k)^{2p}\rho\leq c$ then
\begin{equation}
\begin{aligned}[b]
 \sum_{\ell=1}^N 
\big\| (I-\cI_\ell)\chiell(I-\Qcoarse)v_h\big\|_{H^1_k(\Omega_\ell)}^2
&\leq C\sum_{\ell=1}^N  
\Big( (h_\ell k)^p  + (\Hc k)^p \mu_\ell \Big)^2
%\bigg(\mu_\ell
%+ \Big(\frac{h_\ell}{\Hc}\Big)^p\Big( %\frac{1}{k\delta_\ell}\Big)^{p+1} %\bigg)^2 
\| (I- \Qcoarse)v_h\|_{X(\Omega_\ell)}^2\\
&
\leq 
C \max_\ell 
\Big( (h_\ell k)^p + (\Hc k)^p \mu_\ell \Big)^2
%\bigg(
% \mu_\ell
% + \Big(\frac{h_\ell}{\Hc}\Big)^p\Big( \frac{1}{k\delta_\ell}\Big)^{p+1} \bigg)^2
\|(I-\Qcoarse)v_h\|_X^2
\\
&
\leq 
C \max_\ell
\Big( (h_\ell k)^p  + (\Hc k)^p \mu_\ell \Big)^2
(H_ck)^{-2p}\|(I-\Pi_{H_c})v_h\|_{\cZ}^2.
\end{aligned}
\label{e:mathscrEBound}
\end{equation}
Finally, we estimate $\mathscr{C}_{\ell}$. Since $d(\supp \chiellg,\partial\Omega_\ell\setminus \partial\Omega)\geq c\delta_\ell$, we may apply the ``local quasioptimality modulo low frequencies'' result of~\cite[Theorem 4.2]{AGS1} (see also \cite[Theorem 1.2 and Example 3.1]{AGS1}), to obtain
\begin{align*}
&\|\chi_\ell( Q^\ell -Q_{h}^\ell)(I-Q^0_{H_c})v_h\|_{\cZ}\\
&\leq C\inf_{w_{h,\ell}\in\subdomain}\|\chiellg Q^{\ell}(I-Q^0_{H_c})v_h-w_{h,\ell}\|_{\cZ}+ C\|\chiellg( Q^\ell -Q_{h}^\ell)(I-Q^0_{H_c})v_h\|_{\ZcupHspace{-p}}\\
&\leq C\inf_{w_{h,\ell}\in\subdomain}\|\chiellg (Q^{\ell}-I)(I-Q^0_{H_c})v_h-w_{h,\ell}\|_{\cZ}+C\inf_{w_{h,\ell}\in\subdomain}\|\chiellg (I-Q^0_{H_c})v_h-w_{h,\ell}\|_{\cZ}\\
&\qquad \quad+ C\|\chiellg( Q^\ell -Q_{h}^\ell)(I-Q^0_{H_c})v_h\|_{\ZcupHspace{-p}}.
% &\leq C(h_\ell k)^p\Big(\|\mathscr{A}_{\ell}^{(1)}v_h\|_{\ZcapHspace{p+1}}+\|\mathscr{B}_{\ell}^{(1)}v_h\|_{\ZcapHspace{p+1}}\Big)+
%  \Big( (h_\ell k)^p (k\delta_\ell)^{-p-1} + (\Hc k)^p \mu_\ell \Big) \| (I-Q^0_{H_c})v_h\|_{X(\Omega_\ell)}\\
% %C\inf_{w_{h,\ell}\in\subdomain}\|\chiellg (I-Q^0_{H_c})v_h-w_{h,\ell}\|_{\cZ}\\
% &\qquad \quad+ C\|\chi_\ell( Q^\ell -Q_{h}^\ell)(I-Q^0_{H_c})v_h\|_{\ZcupHspace{-p}}
\end{align*}
Now, by Lemmas~\ref{l:ctsPart} and~\ref{l:ctsDifference}, \eqref{e:superapprox}, \eqref{e:chiBound}, and Lemma~\ref{l:subdomainFEM}, for any $0<s<\frac{1}{2}$,
\begin{align*}
&\|\chi_\ell( Q^\ell -Q_{h}^\ell)(I-Q^0_{H_c})v_h\|_{\cZ}\\
&\leq C(h_\ell k)^p\Big(\|\chiellg\mathscr{A}_{\ell}v_h\|_{\ZcapHspace{p+1}}+\|\chiellg\mathscr{B}_{\ell}v_h\|_{\ZcapHspace{p+1}}\Big)+
 \Big( (h_\ell k)^p  + (\Hc k)^p \mu_\ell \Big) \| (I-Q^0_{H_c})v_h\|_{X(\Omega_\ell)}\\
%C\inf_{w_{h,\ell}\in\subdomain}\|\chiellg (I-Q^0_{H_c})v_h-w_{h,\ell}\|_{\cZ}\\
&\qquad \quad+ C\big((h_\ell k)^p\rho_{\ell}+(h_\ell k)^{s}\big)\inf_{w_{h,\ell}\in \subdomain}\| Q^\ell(I-Q^0_{H_c})v_h-w_{h,\ell}\|_{H_k^1(\Omega_{\ell})}\\
&\leq C(h_\ell k)^p\Big(\|\chiellg\mathscr{A}_{\ell}v_h\|_{\ZcapHspace{p+1}}+\|\chiellg\mathscr{B}_{\ell}v_h\|_{\ZcapHspace{p+1}}\Big)+
 \Big( (h_\ell k)^p  + (\Hc k)^p \mu_\ell \Big) \| (I-Q^0_{H_c})v_h\|_{X(\Omega_\ell)}\\
%C\inf_{w_{h,\ell}\in\subdomain}\|\chiellg (I-Q^0_{H_c})v_h-w_{h,\ell}\|_{\cZ}\\
&\qquad \quad+ C\big((h_\ell k)^p\rho_{\ell}+(h_\ell k)^{s}\big)\Big((h_\ell k)^p\|\mathcal{L}_\ell v_h\|_{\ZcapHspace{p+1}}+\|\mathcal{H}_\ell v_h\|_{H_k^1}\Big)\\
&\leq C(h_\ell k)^p\Big(\|\chiellg\mathscr{A}_{\ell}v_h\|_{\ZcapHspace{p+1}}+\|\chiellg\mathscr{B}_{\ell}v_h\|_{\ZcapHspace{p+1}}\Big)+
 \Big( (h_\ell k)^p  + (\Hc k)^p \mu_\ell \Big) \| (I-Q^0_{H_c})v_h\|_{X(\Omega_\ell)}\\
%C\inf_{w_{h,\ell}\in\subdomain}\|\chiellg (I-Q^0_{H_c})v_h-w_{h,\ell}\|_{\cZ}\\
&\qquad \quad+ C\big((h_\ell k)^p\rho_{\ell}+(h_\ell k)^{s}\big)\Big(\rho_\ell(h_\ell k)^p+(k\delta_\ell)^{-1}\rho_\ell+1\Big)(H_ck)^p\sum_{j=1}^N\langle kd(\Omega_j,\Omega_\ell)\rangle^{-M}\|(I-Q^0_{H_c})v_h\|_{X(\Omega_j)}.
\end{align*}
Hence, arguing as in~\eqref{e:mathscrAbound} and~\eqref{e:mathscrBbound},
\begin{equation} 
\label{e:mathscrCBound}
\begin{aligned}[b]
&\sum_{\ell=1}^N\|\chi_\ell( Q^\ell -Q_{h}^\ell)(I-Q^0_{H_c})v_h\|^2_{\cZ}\\
 &\leq C \max_\ell (h_\ell /H_c)^{2p}\Big(\max_\ell \big(\rho_\ell^2 (k\delta_\ell)^{-2}\big)(\rho(H_ck)^{2p})^2+\max_\ell \rho_\ell^2 (H_ck)^{4p}\Big)\|(I-\Pi_{H_c})v_h\|^2_{\cZ}\\
 &\qquad +  
  C\max_\ell \langle k \delta_\ell\rangle^{-M}
\max_\ell (h_\ell k)^{2p}
 \|(I-\Pi_{H_c})v_h\|^2_{\cZ}+C\max_\ell\Big( (h_\ell /H_c)^{2p}  +  \mu^2_\ell \Big) \| (I-\Pi_{H_c})v_h\|^2_{\cZ}\\
&\qquad \quad+ C \max_{\ell}\Big((h_\ell k)^{2p}\rho^2_{\ell}+(h_\ell k)^{2s}\Big)\Big(\rho_\ell^2(h_\ell k)^{2p}+\rho_\ell^2(k\delta_\ell)^{-2}+1\Big)\|(I-\Pi_{H_c})v_h\|^2_{\cZ}.
\end{aligned}
\end{equation}
Putting together~\eqref{e:IMinusPi}, \eqref{e:mathscrDbound}, \eqref{e:mathscrAbound} \eqref{e:mathscrBbound}, \eqref{e:mathscrEBound}, and~\eqref{e:mathscrCBound}, we obtain
\begin{equation}
    \label{e:betterBound}
\begin{aligned}
    &\|(I-\Pi_{H_c})(Q-I)v_h\|_{\cZ}\\
    &\leq C\Bigg[\max_\ell \big(\rho_\ell (k\delta_\ell)^{-1}\big)\rho (H_ck)^{2p}+\max_\ell \rho_\ell (H_ck)^{2p}\\
    &\qquad \quad
 +\max_\ell\langle k\delta_\ell\rangle^{-M}(H_ck)^p+ (h /H_c)^{p}  +  \max_\ell \mu_\ell  \\
%C\inf_{w_{h,\ell}\in\subdomain}\|\chiellg (I-Q^0_{H_c})v_h-w_{h,\ell}\|_{\cZ}\\
&\qquad \quad+  \max_{\ell}\big((h_\ell k)^p\rho_{\ell}+(h_\ell k)^{s}\big)\Big(\rho_\ell(h_\ell k)^{p}+\rho_\ell(k\delta_\ell)^{-1}+1\Big)\Bigg]\|(I-\Pi_{H_c})v_h\|_{\cZ}.
\end{aligned}
\end{equation}
Also, putting together~\eqref{e:noIMinusPi}, \eqref{e:mathscrDbound}, \eqref{e:mathscrAbound} \eqref{e:mathscrBbound}, \eqref{e:mathscrEBound}, and~\eqref{e:mathscrCBound}, we obtain 
\begin{equation} 
\label{e:basicBound}
\begin{aligned}[b]
    \|(Q-I)v_h\|_{\cZ}&\leq C\Bigg(\max_\ell \big(\rho_\ell (k\delta_\ell)^{-1}\big)\rho (H_ck)^{p}+\max_\ell \rho_\ell (H_ck)^{p}\\
    &\qquad+\max_\ell \langle k \delta_\ell\rangle^{-M}+ 
 (h /H_c)^{p}  + \max_\ell\mu_\ell\\
&\qquad \quad+  \max_{\ell}\big((h_\ell k)^p\rho_{\ell}+(h_\ell k)^{s}\big)\Big(\rho_\ell(h_\ell k)^{p}+\rho_\ell(k\delta_\ell)^{-1}+1\Big)\Bigg)\|(I-\Pi_{H_c})v_h\|_{\cZ}.
\end{aligned}
\end{equation}
The estimates~\eqref{e:betterBound} and~\eqref{e:basicBound} then imply~\eqref{e:theGoal}, and the proof is complete.
\appendix

\section{Coercive estimates in the absorbing layer}\label{app:coercive}

\begin{lemma}
\label{l:absorbMe}
  Let $c_1>0$. Then there  is $c_0>0$ such that for all $k>0$, all $U\subset \mathbb{R}^d$ bounded with Lipschitz boundary and $\Gamma\subset U$ a Lipschitz hypersurface, 
  all
   $A\in L^\infty(U;\mathbb{M}(d\times d))$ symmetric matrices, $b\in L^\infty(U\setminus \Gamma);\mathbb{C}^d)$ and $n\in L^\infty(U;\mathbb{C})$, and $\theta\in \R$
   with 
    \begin{equation}
        \label{e:symbol-coercivity}
    q(x,\xi):= \Re\Big(e^{i\theta} \big(A^{ij}(x)\xi_i\xi_j + b^i(x)\xi_i-n(x)\big)\Big)>c_1(1+|\xi|^2)\quad \tfa   x\in U\setminus \Gamma,\,\xi\in\mathbb{R}^d,
    \end{equation}
    the following is true. 
For all $u\in H^1(U)$ 
\begin{gather*}
Q(u)\geq c_0\|u\|_{H_k^1(U)}^2\\
\text{where}\\
Q(u):=\Re e^{i\theta}\Big(\langle Ak^{-1}\nabla u,k^{-1}\nabla u\rangle_{L^2(U)}-\tfrac{i}{2}\langle b\cdot k^{-1}\nabla u,u\rangle_{L^2(U)}+\tfrac{i}{2}\langle b u,k^{-1}\nabla u\rangle_{L^2(U)}-\langle n u,u\rangle_{L^2(U)}\Big).
\end{gather*}
\end{lemma}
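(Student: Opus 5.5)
The argument is pointwise: turn the symbol inequality \eqref{e:symbol-coercivity} into a matrix inequality at each $x$, then integrate. Fix $x\in U\setminus\Gamma$. Since $A(x)$ is real symmetric and $\xi\in\mathbb{R}^d$, the quadratic form $\xi\mapsto q(x,\xi)$ is a real polynomial of degree two in $\xi$. It can be written as
\[
q(x,\xi)=\xi^T\mathcal{A}(x)\xi+\beta(x)\cdot\xi-\nu(x),
\]
where $\mathcal{A}:=\Re(e^{i\theta}A)$, $\beta:=\Re(e^{i\theta}b)$, and $\nu:=\Re(e^{i\theta}n)$.

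The integrand of $Q(u)$ should be treated in the same way. Write $w=k^{-1}\nabla u$. The two advection terms combine to give $\Re e^{i\theta}\big(-\tfrac{i}{2}(b\cdot w)\bar u+\tfrac{i}{2}b u\cdot\bar w\big)$; note that $\bar w$ is paired against $b$ in the second term. The target is to write the integrand of $Q(u)$ as a Hermitian form
\[
\Big\langle M(x)\begin{pmatrix} w\\ u\end{pmatrix},\begin{pmatrix} w\\ u\end{pmatrix}\Big\rangle_{\mathbb{C}^{d+1}}.
\]
Here $M$ is the Hermitian $(d+1)\times(d+1)$ matrix with blocks $\mathcal{A}$, $\tfrac{1}{2}\cdot$(imaginary-rotated $b$), and $-\nu$. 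If $b$ is complex, an additional term $\Im(e^{i\theta}b)$ also appears, in a skew combination.

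The key step is to check that $M(x)\ge c_0 I$, using \eqref{e:symbol-coercivity}. Positivity of $q$ for all real $\xi$ gives
\[
q(x,\xi)=\Big\langle M'(x)\begin{pmatrix}\xi\\ 1\end{pmatrix},\begin{pmatrix}\xi\\ 1\end{pmatrix}\Big\rangle,
\]
where $M'$ is the real symmetric matrix with blocks $\mathcal{A}$, $\beta/2$, and $-\nu$. By homogeneity and continuity (letting $t\to0$ in $(\xi,t)$), this yields $M'(x)\ge c_1 I$ on $\mathbb{R}^{d+1}$, and hence on $\mathbb{C}^{d+1}$ because $M'$ is real symmetric.

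Then relate $M$ to $M'$. I expect this to be the main obstacle: the advection term in $Q$ carries a factor $i$ relative to the symbol. This is the usual convention that $k^{-1}\nabla$ corresponds to $i\xi$. To handle it, I would conjugate by $\mathrm{diag}(I,\,i)$, that is, replace $u$ by $iu$ in the second slot. This substitution turns $-\tfrac{i}{2}(b\cdot w)\bar u+\tfrac{i}{2}bu\cdot\bar w$ into $\tfrac12(b\cdot w)\overline{u'}+\tfrac12 b u'\cdot\bar w$, while leaving the diagonal blocks unchanged. Taking $\Re e^{i\theta}$ then produces exactly the Hermitian form of $M'$, plus a term of the form $\Im(e^{i\theta}b)$ times an imaginary-part expression. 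I would need to check that this extra term cancels after taking real parts. The symmetrized form $\tfrac12(b\cdot\nabla u+\operatorname{div}(bu))$ is designed so that it does.

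Granting this, the integrand is pointwise at least $c_1(|w|^2+|u|^2)$ almost everywhere. The hypersurface $\Gamma$ has measure zero, so it does not affect the integral. Integrating gives $Q(u)\ge c_1\|u\|^2_{H^1_k(U)}$, so $c_0=c_1$ works. The constant is independent of $k$, $U$, and the coefficients, because the argument is purely pointwise.
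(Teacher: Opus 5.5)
Your proof is correct, but the key positivity step differs from the paper's. Both arguments are pointwise and rest on the same identification. After a unitary change of variables (your $(w,u)\mapsto(w,iu)$, or the paper's $z=-ik^{-1}\nabla u$, which differs only by an overall phase), the integrand of $Q(u)$ is the Hermitian extension of the real quadratic form $(\xi,t)\mapsto \xi^T\mathcal{A}\xi+t\,\beta\cdot\xi-\nu t^2$.

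The paper then completes the square in $\xi$. It fixes $c_0<c_1/2$ and writes $q=c_0(|\xi|^2+1)+(\xi+v)^TG_1(\xi+v)+\delta$ with $G_1=G-c_0I$. It gets $\delta\ge c_0$ by evaluating at $\xi=-v$, and transfers this sum-of-squares identity to $Q(u)$.

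You instead homogenize: $q(x,\xi)=\langle M'(\xi,1),(\xi,1)\rangle$, so scaling plus $t\to0$ gives $M'\ge c_1I$ on $\mathbb{R}^{d+1}$. This extends to $\mathbb{C}^{d+1}$ because $M'$ is real symmetric. Your route skips the factorization and gives the sharp constant $c_0=c_1$, whereas the paper only gets some $c_0<c_1/2$. In exchange, the paper obtains an explicit identity for $Q(u)$ as $c_0\|u\|^2_{H^1_k(U)}$ plus nonnegative terms.

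Two small corrections. First, the step you deferred is immediate, and the Hermitian matrix $M$ contains no $\Im(e^{i\theta}b)$ entry at all. With $u'=iu$ and $\alpha:=\Im(e^{i\theta}b)$, the advection part before taking real parts is $(\beta+i\alpha)\cdot\Re(w\overline{u'})$. The $\alpha$-term is purely imaginary and drops out, so in fact $M=D^*M'D$ exactly, with $D=\mathrm{diag}(I,i)$.

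Second, the lemma assumes $A$ is symmetric, not real; complex $A$ arises from PMLs. What you need is $\Re\big(e^{i\theta}\,\bar w^TAw\big)=\bar w^T\Re(e^{i\theta}A)\,w$. This holds because $e^{i\theta}A=G+iH$ with $G,H$ real symmetric, so $\bar w^THw$ is real. That is exactly where the symmetry hypothesis is used.
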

\bpf
Set
$$
G(x)
:=
\Re
\big(e^{i\theta}A(x)\big),\quad 
\beta(x)
:=
\Re
\bigl( e^{i\theta}b(x)\bigr),
\quad\tand\quad
\gamma(x)
:=
-\Re
\bigl(e^{i\theta}n(x)\bigr)
$$
so that 
\begin{equation}
q(x,\xi)
=
\xi^T G(x)\xi
+
\beta(x)\cdot\xi
+
\gamma(x).
\label{e:q-real-form}
\end{equation}
Since $A$ is symmetric, \(G\) is a real symmetric
matrix. Moreover, by \eqref{e:symbol-coercivity}, there is $c_0>0$ such that $c_1>2c_0$ and
$\xi^T G(x)\xi
\geq
2c_0|\xi|^2.$
Set
$
G_1(x)
:=
G(x)-c_0 I
$ so that
\begin{equation}
G_1(x)
\geq c_0 I
>
0.
\label{e:G1-positive}
\end{equation}
Define
$$
v(x)
:=
\frac12G_1(x)^{-1}\beta(x)
\quad\tand\quad
\delta(x)
:=
\gamma(x)-c_0
-
\frac14
\beta(x)^T G_1(x)^{-1}\beta(x).
$$
Completing the square gives
\begin{equation}
q(x,\xi)
=
c_0\bigl(\xi^T\xi+1\bigr)
+
(\xi+v(x))^T
G_1(x)
(\xi+v(x))
+
\delta(x) \quad\tfa x\in U, \,\xi \in \R^d.
\label{e:completed-square}
\end{equation}
On the other hand, \eqref{e:symbol-coercivity} and the fact that $c_1>2c_0$ implies that
$$
q(x,\xi)
-
c_0\bigl(|\xi|^2+1\bigr)
\geq
c_0\bigl(|\xi|^2+1\bigr)
\quad \tfa x\in U,\,\xi \in \R^d.
$$
Evaluating this inequality at \(\xi=-v(x)\) (which is allowed since $v$ is real) and using
\eqref{e:completed-square} gives
\begin{equation}
\delta(x)
\geq
c_0
\bigl(|v(x)|^2+1\bigr)
\geq
c_0 \quad\tfa x\in U.
\label{e:delta-positive}
\end{equation}
Now let
$$
C(x):=G_1(x)^{1/2},
\quad\tand\quad
d(x):=C(x)v(x).
$$
With this notation \eqref{e:completed-square} becomes
\begin{equation}
q(x,\xi)
=
c_0\bigl(\xi^T\xi+1\bigr)
+
|C(x)\xi+d(x)|^2
+
\delta(x)
\quad\tfa \xi \in \R^d.
\label{e:symbol-factorisation}
\end{equation}
Set
$z:=-ik^{-1}\nabla u$ 
and write
$$
e^{i\theta}A
=
G+iH,
$$
where \(G\) and \(H\) are real symmetric matrices. Since
\(z^*Hz\) is real,
\begin{equation}
\Re\Big(
e^{i\theta}
A^{ij}
k^{-1}\partial_i u\,
\overline{k^{-1}\partial_j u}
\Big)
=
G^{ij}z_i\overline{z_j}.
\label{e:second-order-part}
\end{equation}

Next, we consider the symmetrized first-order term. Write
$$
e^{i\theta}b=\beta+i\alpha,
\qquad
\alpha:=\Im(e^{i\theta}b),
$$
where \(\alpha\) and \(\beta\) are real-valued. Observe then that
\begin{equation} \label{e:first-order-part}
\begin{aligned}
&\Re\left(
-\tfrac{i}{2}e^{i\theta}
  \langle b\cdot k^{-1}\nabla u,u\rangle_{L^2(U)}
+\tfrac{i}{2}e^{i\theta}
  \langle bu,k^{-1}\nabla u\rangle_{L^2(U;\mathbb C^d)}
\right)
\\
&\qquad
=
\tfrac12\Re\left(
\langle(\beta+i\alpha)\cdot z,u\rangle_{L^2(U)}
+
\langle(\beta+i\alpha)u,z\rangle_{L^2(U}
\right)\\
&\qquad
=
\tfrac12\Re\left(
\overline{\langle (\beta-i\alpha)u,z\rangle_{L^2(U)}}
+
\langle(\beta+i\alpha)u,z\rangle_{L^2(U)}
\right)\\
&\qquad
= \Re \langle \beta u,z\rangle=\Re \langle \beta\cdot z,u\rangle.
\end{aligned}
\end{equation}

Because
$
C^T C=G_1$ and $
C^T d
=
G_1v
=
\frac12\beta$,
\begin{align}
\|Cz+du\|_{L^2(U)}^2
&=
\int_{U}
G_1^{ij}z_i\overline{z_j}\,dx
+
\Re
\int_{U}
\beta\cdot z\,
\overline{u}\,dx
+
\int_{U}
|d|^2|u|^2\,dx.
\label{e:square-identity}
\end{align}
Combining
\eqref{e:symbol-factorisation}--\eqref{e:square-identity}
gives
\begin{align}
Q(u)
&=
c_0\left(\|k^{-1}\nabla u\|_{L^2(U)}^2
+
\|u\|_{L^2(U)}^2
\right)+
\|-iCk^{-1}\nabla u+du\|_{L^2(U)}^2
%\notag
%\\
%&\qquad
+\int_{U}
\delta(x)|u(x)|^2\,dx.
\label{e:form-factorisation}
\end{align}
Finally, using \eqref{e:delta-positive} in
\eqref{e:form-factorisation}, we obtain that
\begin{align*}
Q(u)
&\geq c_0
\|k^{-1}\nabla u\|_{L^2(U)}^2
+
c_0
\|u\|_{L^2(U)}^2.
\end{align*}
and the result follows.
\epf

\section*{Acknowledgements}

All three authors were supported by ERC Synergy Grant ``PSINumScat"  (101167139).
JG was 
supported by EPSRC grants EP/V001760/1 and EP/V051636/1 and Leverhulme Research Project Grant RPG-2023-325.

This work was granted access to the HPC resources of GENCI–TGCC, CEA, under the allocation 2026-AD010618004.

ChatGPT version 5.6 was used to proofread the paper and to help with the algebraic manipulations in Appendix \ref{app:coercive}.

\bibliographystyle{plain}
\bibliography{combined.bib}

\end{document}